\documentclass[reqno]{amsart}

\usepackage{graphicx}
\usepackage{amssymb}
\usepackage{enumitem}
\usepackage{mathtools}
\usepackage{xcolor}
\usepackage{soul}
\usepackage{comment}
\usepackage{tikz}
\usepackage{pgfplots} 
\usetikzlibrary{arrows.meta, decorations.pathreplacing, calc, shapes}
\usepackage{bbm}
\usepackage[colorlinks=true, linkcolor=blue, hypertexnames=false]{hyperref} 

\newtheorem{theorem}{Theorem}[section]

\newtheorem{lemma}[theorem]{Lemma}

\newtheorem{coro}{Corollary}[section]
\newtheorem{propo}{Proposition}[section]
\theoremstyle{definition}
\newtheorem{definition}[theorem]{Definition}

\theoremstyle{plain}
\newtheorem{maintheorem}{Theorem}

\newtheorem{remark}[theorem]{Remark}
\numberwithin{equation}{section}

\begin{document}

\title{Entropy stability for diffeomorphisms isotopic to Anosov on $\mathbb{T}^d$}

\author{Leonardo Parra}
\address{Instituto de Matem\'aticas, Pontificia Universidad Católica de Valpara\'iso, Chile.}
\email{leonardo.parra@pucv.cl}

\author{Sebasti\'an A. Ramirez}
\address{Facultad de Matem\'aticas, Pontificia Universidad Cat\'olica de Chile, Santiago, Chile}
\email{sramired@uc.cl}
\thanks{The second author was supported by Proyecto FONDECYT Postdoctorado 3240422, ANID, Chile.}

\author{Kendry J. Vivas}
\address{Departamento de Matem\'aticas, Universidad Católica del Norte, Antofagasta, Chile.}
\email{kendry.vivas01@ucn.cl}
\thanks{The third author was supported by  ANID Proyecto FONDECYT Inicicaci\'on 11250633, Chile.} 

\subjclass[2010]{Primary 37A05, 37D25.}


\keywords{Lyapunov exponents, Non-uniform hyperbolicity, Stable ergodicity}

\begin{abstract}
In this paper we study the behavior of topological entropy for partially hyperbolic diffeomorphisms on $\mathbb{T}^d$ isotopic to a linear Anosov automorphism with indecomposable weak-stable subspace. In particular, we prove that for a class of DA maps associated with such linear systems, whose central behavior is sufficiently dominated by the expansion rate of the linear model, the topological entropy coincides with that of the linear map. Furthermore, we obtain uniqueness of equilibrium states for a class of low-oscillation potentials and, as an application, the uniqueness of the measure of maximal entropy. We also provide sufficient conditions for which ergodic measures arise as the unique equilibrium state of some continuous potential. Finally, on $\mathbb{T}^3$ we construct a dynamically coherent diffeomorphism in the same isotopy class whose topological entropy is strictly larger than that of the linear part.
\end{abstract}

\maketitle

\section{Introduction and statements of the results}\label{statements}

Entropy is one of the most important invariants in dynamical systems. In simpler terms, this notion provides a quantitative measure of the complexity and unpredictability of orbits under iteration. Since the pioneering works of Adler, Konheim and McAndrew \cite{AKM}, Sinai \cite{S}, and Bowen \cite{B}, the role of entropy in hyperbolic dynamics has been well established, linking invariant measures, Lyapunov exponents, and the structural stability of uniformly hyperbolic systems. Among the most classical examples of the computation of topological entropy we found the Linear Anosov diffeomorphisms on tori, and their entropy is explicitly determined by the unstable eigenvalues of the underlying linear maps. They serve as natural models for exploring the persistence of dynamical invariants under perturbations.

An interesting topic about entropy theory is its preservation under deformations, {which provides} a way to find the ``simplest'' model {within} isotopy classes of diffeomorphisms on compact manifolds. In this research topic, the Nielsen-Thurston classification establishes that for any orientation preserving diffeomorphism on a compact surface is homotopic to either a periodic map ($g^p=id$ for some $p\in\mathbb{N}$), or a pseudo-Anosov, or leaves invariant some finite set of closed simple curves. In the first case, the entire homotopy class consists of systems with zero entropy. For pseudo-Anosov diffeomorphisms, it was shown in \cite{FS} that there exist diffeomorphisms $f$ isotopic to a pseudo-Anosov map $A$ whose topological entropy is greater than that of $A$. The third case was proven that can be reduced to the previous ones. Therefore, it is possible to identify {in a precise way} the simplest model in terms of entropy in any homotopy class of maps on compact surfaces. 

In higher dimensions, the problem of entropy preservation within isotopy classes becomes substantially more subtle. A fundamental source of examples beyond uniform hyperbolicity is given by the so-called Derived-from-Anosov (DA) diffeomorphisms on $\mathbb{T}^d$, first introduced by Mañé in his seminal work on the stability conjecture \cite{Ma}. These examples provided the first robustly transitive diffeomorphisms that are not Anosov, marking the emergence of partially hyperbolic dynamics as a central subject of study. Subsequent developments in the theory of stability and bifurcations, including, for instance,   contributions by Newhouse, Palis, and Takens in \cite{NP} and \cite{NPT}, further highlighted the richness of this setting.

In this context, {the authors in }\cite{CLPV} studied entropy preservation for DA diffeomorphisms on higher-dimensional tori and obtained sufficient conditions under which the topological entropy coincides with that of the linear Anosov representative. Besides, Theorem~B of that work illustrates the necessity of these hypotheses. More precisely, the authors construct a DA partially hyperbolic diffeomorphism $g$, isotopic to a linear Anosov map $A$, with an indecomposable two-dimensional central bundle and topological entropy strictly larger than that of $A$. Their construction was inspired by {a} model introduced by Bronzi and Tahzibi in \cite{BT}. However, the argument presented there contains certain gaps. This observation motivated us to revisit the problem and to establish the main results of the present paper.

In this article we show that the type of construction carried out in \cite{CLPV} may fail to increase entropy in certain situations.
More precisely, when the uniform unstable expansion is sufficiently large compared with the central expansion, the same construction as in \cite{CLPV} actually preserves the entropy of the linear part.
We also provide an alternative proof of their Theorem B, confirming that the statement of the theorem itself is correct. Our results provide an explicit criterion for entropy stability applicable to derived-from-Anosov diffeomorphisms with an indecomposable two-dimensional central bundle. In particular, Theorem~\ref{teo:MainTheorem} applies even to examples with positive topological entropy along some fibers of the semiconjugacy; see Proposition~\ref{example}.

To state our main results, we first describe the geometric setting of our perturbations. Let $A \colon \mathbb{T}^d \to \mathbb{T}^d$ be a linear Anosov automorphism admitting a dominated splitting of the form
\begin{displaymath}
T\mathbb{T}^d = E^{s}_A \oplus E^{ws}_A \oplus E^u_A,
\end{displaymath}
where $E^{s}_A \oplus E^{ws}_A$ corresponds to the stable bundle of $A$ and $E^u_A$ to its unstable bundle. We put $\lambda^{ws}_A=\|DA\mid_{E^{ws}_A}\|$ and $\lambda_A^u\!= m(DA_{E^{u}_A})$, where $m\left(L\right)$ denotes the co-norm of the linear map $L$. {Note that the strong stable bundle $E^{s}_A$ is allowed to be trivial.}

We consider partially hyperbolic systems obtained as $C^0$-small localized deformations of $A$ (see Definition \ref{def:WeakStableDef}). These deformations replace the uniformly contracting behavior along $E^{ws}_A$, turning it into a center bundle $E^c$ of dimension $d_c = \dim(E^{ws}_A)$, while preserving the dimensions of the strongly invariant directions.

Recall that a diffeomorphism $f \in \mathrm{Diff}^1(\mathbb{T}^d)$ is \emph{partially hyperbolic} if the tangent bundle admits a $Df$-invariant splitting $T\mathbb{T}^d  = E^s_f \oplus E^c_f \oplus E^u_f$ such that for every $x \in \mathbb{T}^d$,
\begin{equation*}
\|Df\!\mid_{E^s_f(x)}\| < m\left(Df\!\mid_{E^c_f(x)}\right) \quad \text{and}\quad \|Df\!\mid_{E^c_f(x)}\| < m\left(Df\!\mid_{E^u_f(x)}\right),
\end{equation*}
together with $\|Df|_{E^s_f}\| < 1$ and $\|Df^{-1}|_{E^u_f}\| < 1$.

In this context, we say that  a partially-hyperbolic diffeomorphism $f$ is \emph{dynamically coherent} if there exist $f$-invariant foliations $\mathcal{W}^{cs}$ and $\mathcal{W}^{cu}$ tangent to the distributions $E^s_f \oplus E^c_f$ and $E^c_f \oplus E^u_f$, respectively, whose intersection {defines} an invariant center foliation $\mathcal{W}^c$ tangent to $E^c_f$. In general, the center bundle $E^c_f$ need not be integrable when $d_c \geq 2$ (see, for instance, R. Hertz et al. \cite{RHRHU16}). Nevertheless, for diffeomorphisms isotopic to an Anosov automorphism $A$ on $\mathbb{T}^3$ and with a one-dimensional unstable direction, Potrie \cite{Po2015} showed that there is a unique invariant foliation $\mathcal{W}^{cs}$, having a global product structure property with the unstable foliation $\mathcal{W}^{u}$. 

Let $\mathrm{PH}_A(\mathbb{T}^d)$ denote the set of partially hyperbolic diffeomorphisms $f$ isotopic to $A$ such that $\dim(E^\sigma_f) = \dim(E^\sigma_A)$ for $\sigma \in \{s, u\}$. By a classical theorem of Franks \cite{Fr70}, every $f \in \mathrm{PH}_A(\mathbb{T}^d)$ admits a continuous surjective map $\pi_f \colon \mathbb{T}^d \to \mathbb{T}^d$ {such that}
\begin{equation*}
A \circ \pi_f = \pi_f \circ f.
\end{equation*}
Denote by $\mathrm{PH}^c_A(\mathbb{T}^d)$ the connected component of $\mathrm{PH}_A(\mathbb{T}^d)$ containing the linear model $A$. It is known that dynamical coherence is a $C^1$-open property \cite{FPS14} within this connected component.

{Next, we introduce the main definition of this paper:}

\begin{definition} \label{def:WeakStableDef}
Let $N\in\mathbb{N}$ and $r>0$. A $C^1$-diffeomorphism $g \colon \mathbb{T}^d \to \mathbb{T}^d$ is an $(N,r)$--\emph{localized deformation} of $A$ if $g \in \mathrm{PH}_A(\mathbb{T}^d)$ {and it differs from $A$ only inside the union of $N$ disjoint balls of radius $r$.} That is, there exist points $p_1, \dots, p_N \in \mathbb{T}^d$ such that:
\begin{enumerate}
    \item $B(p_i, r) \cap B(p_j, r) = \emptyset$ for $i \neq j$;
    \item $g(x) = A(x)$ for all $x \in \mathbb{T}^d \setminus \bigcup_{i=1}^N B(p_i, r)$.
\end{enumerate}
\end{definition}

 For $\varepsilon > 0$, we denote by $\mathcal{U}^0_\varepsilon(A)$ to a $C^0$-neighborhood of radius $\varepsilon$ around $A$. It is well known by \cite{W1} that if $\varepsilon$ is small enough, every map $g\in \mathcal{U}^0_\varepsilon(A)$ is semi-conjugate to $A$ by a map $\pi:\mathbb{T}^d\to\mathbb{T}^d$ which is $C^0$-close to the identity. 

Our first main result establishes the stability of the topological entropy of the linear model $A$ under {suitable} localized deformations. More precisely, we control the maximal expansion along the perturbed center direction inside the deformation region by a constant $\lambda_c>1$, while the contraction outside it is controlled by a bound $0<\lambda_s<1$, {in such a way that the topological entropy of the linear model is preserved.} The precise statement is as follows.
{
\begin{maintheorem} \label{teo:MainTheorem}
Let $A \colon \mathbb{T}^d \to \mathbb{T}^d$ be a linear Anosov diffeomorphism admitting a dominated splitting of the form
\begin{displaymath}
T\mathbb{T}^d = E^{s}_A \oplus E^{ws}_A \oplus E^u_A.
\end{displaymath} 
There exist a constant $C_1(A)>1$ with the following property: For every integer $N \geq 1$ and any choice of bounds $\lambda_c \in [1, C_1(A))$, there exist $r_1 > 0$, $\varepsilon > 0$ and a constant $C_2(A,\lambda_c)$ such that for any $(N,r_1)$-localized deformation $g_0\in \mathcal{U}^0_{\varepsilon}(A) \cap \mathrm{PH}^c_A(\mathbb{T}^d)$ satisfying
\begin{equation*} \label{ineq:Open0}
\sup_{x \in \mathcal{O}} \|Dg_0|_{E^c_{g_0}(x)}\| < \lambda_c \quad \text{and} \quad \sup_{x \in \mathbb{T}^d \setminus \mathcal{O}} \|Dg_0|_{E^c_{g_0}(x)}\| < \lambda_s,
\end{equation*}
where $\mathcal{O} = \bigcup_{i=1}^N B(p_i, r_1)$ and $\lambda_s \in [\lambda_A^{ws}, C_2(A,\lambda_c))$, there exists a $C^1$-neighborhood $\mathcal{U} \subset \mathcal{U}^0_{\varepsilon}(A)$ of $g_0$ such that 
\begin{equation*}
h_{\mathrm{top}}(g) = h_{\mathrm{top}}(A),\quad\forall g\in\mathcal{U}. 
\end{equation*}
\end{maintheorem}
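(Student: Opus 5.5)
The plan is to prove $h_{\mathrm{top}}(g)\ge h_{\mathrm{top}}(A)$ and $h_{\mathrm{top}}(g)\le h_{\mathrm{top}}(A)$ separately for every $g$ in a small $C^1$-neighborhood $\mathcal{U}$ of $g_0$. The lower bound is standard. For $g\in\mathcal{U}\subset\mathcal{U}^0_\varepsilon(A)$ the Franks--Walters semiconjugacy $\pi_g$ is onto and satisfies $A\circ\pi_g=\pi_g\circ g$, so $A$ is a topological factor of $g$ and $h_{\mathrm{top}}(g)\ge h_{\mathrm{top}}(A)$. For the upper bound we use Bowen's inequality for factor maps,
\begin{equation*}
h_{\mathrm{top}}(g)\le h_{\mathrm{top}}(A)+\sup_{y\in\mathbb{T}^d} h_{\mathrm{top}}\bigl(g,\pi_g^{-1}(y)\bigr).
\end{equation*}
This reduces the theorem to showing that every fiber $\pi_g^{-1}(y)$ has zero topological entropy. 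If that fails, we argue through measures: by Ledrappier--Walters we get an ergodic $\mu$ with $h_\mu(g)>h_{\mathrm{top}}(A)=h_{\pi_*\mu}(A)$. We then aim for a contradiction via a Ruelle-type bound for $h_\mu(g)$ in terms of center and unstable exponents.

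The key structural steps, in order, are the following.

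First, we use dynamical coherence. It is $C^1$-open in $\mathrm{PH}^c_A$ by \cite{FPS14}, so after shrinking $\mathcal{U}$ every $g$ has a center foliation $\mathcal{W}^c_g$. This foliation is quasi-isometric and is sent by $\pi_g$ onto the weak-stable foliation of $A$. Consequently each fiber $\pi_g^{-1}(y)$ lies inside a single center leaf and has uniformly bounded diameter.

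Second, we obtain a uniform estimate on the proportion of time that orbits spend in $\mathcal{O}$. Choose $r_1$ small relative to $N$ and to the period structure of $A$: take a period $P$ larger than an $N$-dependent constant and $r_1$ below a return time. Then any orbit segment of $g$, which $\varepsilon$-shadows an $A$-orbit, visits $\mathcal{O}$ at most a fraction $\theta=\theta(r_1,\varepsilon)$ of the time, except for segments near the periodic points $p_i$. For those segments we use the fact that near the $p_i$ the center expansion is at most $\lambda_c$. From this we get, for every $x$ and $n$,
\begin{equation*}
\|Dg^n|_{E^c_g(x)}\|\lesssim \lambda_c^{\theta n}\lambda_s^{(1-\theta)n}.
\end{equation*}
If one cannot make $\theta$ uniformly small, the fallback is to use the cone and domination structure: the expansion along $E^u$ is at least $\lambda_A^u$ while the center expansion is at most $\lambda_c<C_1(A)$.

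Third, we bound the center growth of fibers. Let $d_c=\dim E^c$. The growth of $d_c$-dimensional center volume along a fiber is at most $\lambda_c^{d_c\theta n}\lambda_s^{d_c(1-\theta)n}$. Its exponential rate is therefore at most $d_c(\theta\log\lambda_c+(1-\theta)\log\lambda_s)$. The constants $C_1(A)$ and $C_2(A,\lambda_c)$ are chosen so that this rate is strictly smaller than what would be needed to produce entropy in excess of $h_{\mathrm{top}}(A)$. In particular, $\lambda_s<C_2$ is imposed so that the averaged exponent is at most $0$, or at least below the threshold coming from $\lambda_A^u$.

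Fourth, we show that fibers of bounded diameter inside center leaves carry no entropy. Since the fibers lie in center leaves, the $(n,\delta)$-separated sets inside a fiber are controlled by center volume growth. This gives $h_{\mathrm{top}}(g,\pi_g^{-1}(y))\le\max\{0,\text{averaged center rate}\}=0$.

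The main obstacle is the second step. A ball of fixed radius around a point can be revisited with positive frequency by orbits that shadow periodic orbits of $A$ through $p_i$, so the frequency $\theta$ is not automatically small. I would handle this by exploiting the domination $\lambda_c<C_1(A)\le\lambda_A^u$. Instead of requiring the fibers to be non-expanding, the plan is to estimate the entropy of an ergodic $\mu$ concentrated there by the Ledrappier--Young/Ruelle bound
\begin{equation*}
h_\mu(g)\le\sum_{\text{center}}\lambda_i^+ +\sum_{\text{unstable}}\lambda_i.
\end{equation*}
We then compare this with the unstable volume growth of $A$. The unstable contribution is matched by $h_{\mathrm{top}}(A)=\log|\det A|_{E^u}|$ through the semiconjugacy, since $\pi_g$ restricted to unstable leaves is a bounded-distortion homeomorphism. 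The positive center contribution must vanish. This happens because $\mu$-typical center exponents are averages of $\log\|Dg|_{E^c}\|$, which are at most $\mu(\mathcal{O})\log\lambda_c+(1-\mu(\mathcal{O}))\log\lambda_s$. The bound on $\lambda_s$ by $C_2$ is exactly what makes this average negative once $\mu(\mathcal{O})$ is controlled; that control comes from $\pi_*\mu$ being an $A$-invariant measure whose weight on an $r_1$-neighborhood is small unless it sits on the periodic orbits of the $p_i$. The remaining case of measures sitting on those periodic orbits is handled directly, since there the entropy is zero.
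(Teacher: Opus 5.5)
There are genuine gaps, and they are exactly where the paper's two key ingredients sit.

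\textbf{The fiber reduction and the concentration claim.} Your reduction via Bowen's inequality to $h_{\mathrm{top}}(g,\pi_g^{-1}(y))=0$ aims at a false statement, because the hypotheses allow fibers of positive entropy. In Proposition~\ref{example}, the horseshoe inside the center-stable leaf through $p$ has $\pi$-image an $A$-invariant set of diameter below the expansivity constant of $A$. That image is therefore a single point, so that fiber has entropy at least $\log 2$. Steps~2--4 cannot be repaired, and your fallback rests on two further false claims.
\begin{enumerate}
\item It is not true that an $A$-invariant measure gives small weight to an $r_1$-ball unless it sits on the periodic orbits of the $p_i$. If $p_i$ is fixed, horseshoes built from long stays near $p_i$ separated by homoclinic excursions carry ergodic measures of positive entropy with weight arbitrarily close to $1$ on the ball.
\item It is not true that $g$-measures projecting onto those periodic orbits have zero entropy. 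The horseshoe measures above project to a Dirac mass and have entropy $\log 2$.
\end{enumerate}
The missing idea is an entropy dichotomy.
\begin{enumerate}
\item Ergodic $\nu$ with $h_\nu(g)\le h_1$ are harmless, since the parameters give $h_1<h_{\mathrm{top}}(A)$.
\item If $h_\nu(g)>h_1=h_0+d_c\log\lambda_c$, then Ledrappier--Walters with the fiber bound $d_c\log\lambda_c$ gives $h_{\pi_*\nu}(A)>h_0>(1-\eta)h_{\mathrm{top}}(A)$ (Lemma~\ref{le:keylemma}). Buzzi--Fisher non-concentration, transferred through $\pi$ (Proposition~\ref{prop:Main1}), then gives $\nu(\mathcal O)<\eta$, hence negative center exponents.
\end{enumerate}
Making both branches work requires $\eta>d_c\log\lambda_c/h_{\mathrm{top}}(A)$ together with $\eta\log\lambda_c+(1-\eta)\log\lambda_s<0$. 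This is exactly where $C_1(A)$ and $C_2(A,\lambda_c)$ come from (Lemma~\ref{le:Aux}).

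\textbf{The unstable contribution.} Even once the center exponents are negative, bounding $h_\mu(g)$ by the sum of the unstable Lyapunov exponents of $\mu$ does not give $h_{\mathrm{top}}(A)$. Those exponents depend on $\mu$ and can exceed $\log|\det A|_{E^u}|$.
\begin{enumerate}
\item A localized deformation may increase the unstable expansion. For $f_1$ in Lemma~\ref{le:Primer_Anosov}, the Dirac mass at $p$ has unstable exponent $\log\lambda>\log\lambda_u$.
\item For a $C^2$ nonlinear Anosov map in the isotopy class, the measure of maximal entropy is typically not SRB. By the Ledrappier--Young characterization of equality in Ruelle's inequality, its unstable exponent then strictly exceeds its entropy $h_{\mathrm{top}}(A)$.
\end{enumerate}
The semiconjugacy is merely a homeomorphism along unstable leaves, with no derivative control. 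It cannot transfer exponents, so ``bounded distortion'' is not available. What you need is a bound by a measure-independent quantity: the Hua--Saghin--Xia inequality $h_\mu(g)\le\chi_u(g)+\sum_{\lambda^c_i>0}\lambda^c_i m_i$. Combine it with $\chi_u(g)=\log\operatorname{sp}(g_{*,d_u})=h_{\mathrm{top}}(A)$ on a $C^1$-neighborhood of $g_0$ (Saghin's theorem, Proposition~\ref{prop:VolumeCrecimiento}). With these two ingredients, the variational principle finishes the argument as in the paper.
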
}

\begin{remark}
Theorem~\ref{teo:MainTheorem} applies in particular to the derived-from-Anosov maps considered by Buzzi et al. in \cite{BFSV2012}.
\end{remark}

Now, recall that a diffeomorphism $f$ is \textit{$C^1$-robustly transitive} if there is a $C^1$-neighborhood $\mathcal{V}$ of $f$ such that any $g\in\mathcal{V}$ is transitive. The second main result of this article shows that, when the central bundle is indecomposable, it is possible to construct a robustly transitive diffeomorphism in the isotopy class of a linear Anosov diffeomorphism $A$ whose topological entropy is strictly larger than that of $A$. 
 \begin{maintheorem}\label{theorem B} There exists a robustly transitive partially hyperbolic DA diffeomorphism $f$ isotopic to a linear Anosov automorphism $A:\mathbb{T}^3\to \mathbb{T}^3$ with indecomposable two-dimensional stable bundle  such that $h_{\text{top}}(f) > h_{\text{top}}(A)$.
 \end{maintheorem}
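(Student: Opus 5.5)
The plan is to start from a linear Anosov automorphism $A\in SL(3,\mathbb{Z})$ with eigenvalues $0<|\mu_1|=|\mu_2|<1<\mu_3$, where $\mu_{1,2}$ are complex conjugate and non-real. Then $E^{ws}_A=E^s_A$ is two-dimensional and indecomposable, $E^u_A$ is one-dimensional, and $h_{\mathrm{top}}(A)=\log\mu_3$. Take $A$ with large unstable eigenvalue, and let $p$ be a fixed point of $A$; choosing $A^k$ if needed gives enough fixed points. Inside a small ball $B(p,r)$ I would perform a Ma\~n\'e/Bonatti--D\'iaz--type DA deformation along the two-dimensional stable plane. The local model at $p$ would be $(z,t)\mapsto(\phi(|z|)R_\theta z,\ \mu_3 t)$ with $z\in E^s_A\cong\mathbb{C}$. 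The radial profile $\phi$ equals $|\mu_1|$ near $|z|=r$ and is large, say $\phi(0)=\sigma>\mu_3$, near $p$. This turns $p$ into a source with $\dim$ of the unstable index equal to $3$.

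Partial hyperbolicity with $E^u$ preserved requires the center expansion to stay below $\mu_3$, so $\phi(0)>\mu_3$ is not allowed near $p$ if we keep $E^u$. The sharper choice is therefore to keep $\sup\|Dg|_{E^c}\|<\mu_3$ but make the center expansion close to $\mu_3$ on a set. This lies outside the regime of Theorem~\ref{teo:MainTheorem}, since $\lambda_c\ge C_1(A)$ is allowed. Domination $E^c\prec E^u$ and the cone fields are checked on the deformation region as usual, and the center-stable foliation comes from \cite{Po2015}. Robust transitivity then follows the standard argument: $\mathcal{W}^u$ is robustly minimal, by the Pujals--Sambarino / Bonatti--D\'iaz--Ures criterion, provided the stable-ish region outside the ball is large and the unstable leaves are long lines crossing every $cs$-disc. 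This uses the global product structure with the $\mathcal{W}^u$ foliation of $A$ via Franks' semiconjugacy $\pi_f$.

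For the entropy increase, the key idea is that entropy cannot exceed $\log\mu_3$ through the semiconjugacy plus the fibers (Ledrappier--Walters: $h_{\mathrm{top}}(f)\le h_{\mathrm{top}}(A)+\sup_x h(f,\pi^{-1}(x))$). So the extra entropy must come from the fibers of $\pi_f$, that is, from a center-direction horseshoe. My construction would design the deformation on a ball around $p$ so that the rotation and radial profile produce, inside the two-dimensional center plane near $p$, a compact invariant set on which $f$ restricted to $\mathcal{W}^c_{loc}$ contains a horseshoe with entropy $\log m$. A cleaner alternative is to exhibit an ergodic measure $\nu$ supported near $p$ whose center exponents are both positive, with $\lambda^c_1+\lambda^c_2>0$ large. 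Then I would use the Pesin/Ruelle bound in reverse: build, via Katok's horseshoe approximation, a hyperbolic set of index $3$? That fails, since $E^u$ is still dominated.

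Instead I would use a measure-theoretic count. I take an invariant set $\Lambda$ that is the product of an $A$-like unstable horseshoe with a center horseshoe contained in one $\pi$-fiber structure, and compute $h_\mu(f)=\log\mu_3+h_{\text{center}}>\log\mu_3$ for a suitable Markov measure. Concretely, in a box $B$ near $p$, the return map along $\mathcal{W}^u$ crosses $B$ in about $e^{n\log\mu_3}$ strips, while the center dynamics folds $B$ into $k\ge2$ pieces. This gives $\gtrsim(k\mu_3^n)$ separated orbits per period $n$, with $k$ fixed and independent of $n$, giving entropy $\ge\log\mu_3+\frac1n\log k$ for a good period. The main obstacle is exactly this step. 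It requires producing genuine transverse folding in the two-dimensional center while keeping $\|Df|_{E^c}\|<\mu_3$ and $C^0$-closeness, and then verifying that the counted orbits are truly $(n,\epsilon)$-separated in the center direction rather than collapsed by $\pi_f$. I would attempt it by first building a partially hyperbolic skew-product model on a box where the center map is a planar horseshoe, verifying the cone conditions, and then gluing it into $A$ via a $C^1$ bump in the region where $|z|$ is small.
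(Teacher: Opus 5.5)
There is a genuine gap, and it is exactly the step you flag as the main obstacle. It is not a technicality; the mechanism itself is wrong. You keep the unstable rate equal to $\mu_3$ everywhere, so domination forces $\sup\|Df|_{E^c}\|<\mu_3$. Under that constraint neither of your routes can produce entropy above $\log\mu_3$.

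\emph{First route (center horseshoe).} A center horseshoe near $p$ is a compact invariant set in the invariant leaf $\mathcal{W}^c(p)$. The restriction of $f$ to that leaf is a surface diffeomorphism. Ruelle's inequality for $f$ and $f^{-1}$ then bounds the entropy of any invariant measure there by the positive part of its largest exponent. Hence its entropy is at most $\log\sup\|Df|_{E^c}\|<\log\mu_3$.

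\emph{Second route (``product'' count).} The claimed count $k\mu_3^n$ is unjustified. The folding exists only in the tiny ball, while outside it the center is contracted by $|\mu_1|=\mu_3^{-1/2}$. For a period-$n$ return map to fold the center into $k\ge 2$ crossing pieces, the orbit needs net center expansion. That is, it must spend a fraction $\eta$ of its time in the ball with $\eta\log\lambda_c>(1-\eta)\tfrac12\log\mu_3$, so $\eta>1/3$. By non-concentration (Lemma~\ref{le:BuzziFisher}, Proposition~\ref{prop:Main1}) there are only about $\mu_3^{(1-\eta)n}$ such $A$-itineraries, not $\mu_3^n$. Heuristically the total entropy is then at most $(1-\eta)\log\mu_3+\bigl(\eta\log\lambda_c-(1-\eta)\tfrac12\log\mu_3\bigr)\le\max\{\tfrac12\log\mu_3,\log\lambda_c\}<\log\mu_3$. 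Unstable entropy and center folding do not add. This is precisely the mechanism behind Theorem~\ref{teo:MainTheorem}, and it suggests your regime leaves no room for an entropy increase at all.

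The missing idea is to make room for center expansion \emph{above} $\lambda_u=\mu_3$ by first raising the unstable rate locally. The paper does this in three steps.
\begin{enumerate}
\item It replaces $A$ by an Anosov diffeomorphism $f_1$ isotopic to $A$, given near $p$ by $(w,z)\mapsto(Rw,[\lambda_u+L\chi(w,z)]z)$. Then $Df_1=\mathrm{diag}(R,\lambda)$ near $p$ with $\lambda>\lambda_u$, while $h_{\mathrm{top}}(f_1)=h_{\mathrm{top}}(A)$ and $E^s_{f_1}(p)=E^s_A(p)$.
\item Only then does it insert, in the invariant center-stable plane through $p$, a Smale horseshoe $\sigma_0$ with $\log\lambda_u<h_{\mathrm{top}}(\sigma_0)<\log\lambda$. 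The horseshoe's expansion stays below $\lambda$, so the unstable cone argument of Lemma~\ref{PH} still gives domination.
\item The horseshoe is a compact $f$-invariant set conjugate to $\sigma_0$, so $h_{\mathrm{top}}(f)\ge h_{\mathrm{top}}(\sigma_0)>\log\lambda_u$ by monotonicity alone. All the excess entropy sits in the single fiber $\pi_f^{-1}(p)$, and no product or counting argument is needed.
\end{enumerate}
Note also that indecomposability of $E^{cs}_f$ has to be witnessed away from the horseshoe. The paper uses a second fixed point $p'$ of $A$ outside the perturbation region, where $Df|_{E^{cs}_f(p')}=R$ has no real invariant lines.
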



Once the topological complexity of these perturbations is understood according to Theorem \ref{teo:MainTheorem}, a natural question that arises concerns the statistical properties of the system. For this, we briefly recall that for a $C^1$-diffeomorphism $g:\mathbb{T}^d\to \mathbb{T}^d$ and a continuous potential $\phi:\mathbb{T}^d\to \mathbb{R}$, the topological pressure of the pair $(g,\phi)$ is defined by the variational principle \cite{W75}:
$$
P_{\text{top}}(g, \phi) = \sup_{\mu \in \mathrm{Prob}(g)} \left\{ h_\mu(g) + \int \phi \, d\mu \right\}.
$$
A measure attaining this supremum is called an \emph{equilibrium state} for $(g, \phi)$.

A central problem is to find conditions ensuring the existence and uniqueness of equilibrium states for continuous potentials. For uniformly hyperbolic systems, the thermodynamic formalism was developed by Sinai~\cite{Sin72}, and later by Ruelle and Bowen ~\cite{B74,R}. However, extending these results to partially hyperbolic diffeomorphisms presents substantial difficulties due to the non-uniform behavior along the central direction.

Important progress in the partially-hyperbolic setting was achieved by Climenhaga and Thompson~\cite{CT16}, who developed a general framework to prove intrinsic ergodicity by decomposing the dynamics into components with specification properties and components with low entropy. Buzzi and Fisher~\cite{BF} obtained a related entropy-conjugacy result for $C^1$-large, $C^0$-small deformations of Anosov diffeomorphisms admitting only a dominated splitting, establishing a partial conjugacy that identifies invariant measures of near-maximal entropy. In the specific case of Derived-from-Anosov systems on $\mathbb{T}^3$, Buzzi et al. in \cite{BFSV2012} established the uniqueness of the measure of maximal entropy by exploiting the one-dimensional structure of the center bundle.

A complementary structural result concerns the existence of equilibrium states in partially hyperbolic dynamics. In the presence of a dominated splitting of the central bundle into one-dimensional subbundles, Díaz et al. in \cite{DFPV} proved that such systems are entropy-expansive. As a consequence, the metric entropy is upper semicontinuous with respect to the weak$^\ast$ topology on the set of invariant measures, which in turn guarantees the existence of measures of maximal entropy and, more generally, equilibrium states for every continuous potential via classical results of Misiurewicz~\cite{Mi}. This structural mechanism provides a general theory in the one-dimensional center setting and underlies the applicability of the results in~\cite{CT}.

In contrast, when the center bundle is two-dimensional and indecomposable, entropy-expansiveness is not known to hold in general, and the above existence mechanism breaks down. Consequently, the existence and uniqueness of equilibrium states in this setting require new arguments beyond the standard entropy-expansive framework.

For non-zero potentials, uniqueness of equilibrium states often requires additional assumptions, such as a low oscillation condition ensuring that the pressure is dominated by the hyperbolic behavior rather than by entropy generated along the fibers of the semiconjugacy. In this context, Crisostomo and Tahzibi in \cite{CT} proved that for any diffeomorphism isotopic to a linear Anosov map with a one-dimensional center bundle, a unique equilibrium state exists whenever the variation of the potential is sufficiently small compared to the topological entropy. Their result applies to the entire isotopy class, but relies essentially on the one-dimensional nature of the center.

In this paper, we treat with a two-dimensional indecomposable central bundle. The absence of a finer invariant splitting may allow for richer internal dynamics, potentially supporting multiple equilibrium states. By considering lifted potentials of the form $\varphi=\psi\circ\pi$ through the semiconjugacy $\pi$, we show that a suitable pressure gap between the linear model and the entropy produced along the fibers is sufficient to guarantee uniqueness in this scenario.

In the setting of Theorem~\ref{teo:MainTheorem}, the semiconjugacy $\pi:\mathbb{T}^d\to\mathbb{T}^d$ between $g$ and $A$ provides a natural mechanism to transfer potentials from the linear model to the perturbation. To this end, let $\psi:\mathbb{T}^d\to\mathbb{R}$ be a continuous potential such that $(A,\psi)$ admits a unique equilibrium state $\mu$, satisfying the following low oscillation condition:
\begin{equation}\label{eq:oscillation-condition}
   \sup_{\mathbb{T}^d} \psi - \inf_{\mathbb{T}^d} \psi < h_{\mathrm{top}}(A) - h_1,
\end{equation}
where $h_1<h_{\mathrm{top}}(A)$ is a constant to be specified in Proposition~\ref{pro:Main2}. The next result of this paper is the following:

\begin{maintheorem}\label{coroequilibrium}
The potential $\varphi := \psi \circ \pi$, where $\psi$ satisfies \eqref{eq:oscillation-condition}, admits a unique equilibrium state $\nu$, which is hyperbolic, that is, it has non-zero Lyapunov exponents. In particular, $g$ admits a unique measure of maximal entropy.
\end{maintheorem}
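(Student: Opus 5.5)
The strategy is to push everything down to the linear model through the semiconjugacy $\pi$ and to use the fiber-entropy bound $h_1<h_{\mathrm{top}}(A)$ from Proposition~\ref{pro:Main2}. I expect that proposition to say that every ergodic $g$-invariant measure $\nu$ whose projection $\pi_*\nu$ "sees" the non-injectivity region of $\pi$ (or, equivalently, every measure with a non-hyperbolic central exponent) satisfies $h_\nu(g)\le h_1$. I will assume it gives at least the following two facts. First, the set $\Delta=\{x:\pi^{-1}(\pi(x))\neq\{x\}\}$ carries only measures of entropy at most $h_1$. Second, measures with a zero or nonnegative center exponent have entropy at most $h_1$.

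\textbf{Existence.} Set $\varphi=\psi\circ\pi$. Since $\pi\circ g=A\circ\pi$, the Ledrappier--Walters formula gives
\[
h_\nu(g)\le h_{\pi_*\nu}(A)+\int h_{\mathrm{top}}(g,\pi^{-1}(y))\,d\pi_*\nu(y),
\]
and also $\int\varphi\,d\nu=\int\psi\,d\pi_*\nu$. Every $A$-invariant measure lifts, and lifting does not decrease entropy. Hence
\[
P_{\mathrm{top}}(g,\varphi)\ge P_{\mathrm{top}}(A,\psi)\ge h_{\mathrm{top}}(A)+\inf\psi .
\]
Now take any $\nu$ with $h_\nu(g)\le h_1$. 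For such a measure,
\[
h_\nu(g)+\int\varphi\,d\nu\le h_1+\sup\psi<h_{\mathrm{top}}(A)+\inf\psi
\]
by \eqref{eq:oscillation-condition}. So these measures are excluded from being near-equilibrium, and the pressure is realized only by measures living (up to $\pi$) where $\pi$ is injective. Existence then follows by taking a sequence $\nu_n$ with pressure tending to $P_{\mathrm{top}}(g,\varphi)$. Their projections converge to some $\mu'$, and $\mu'$ must be an equilibrium state of $(A,\psi)$, because $A$ is expansive and $h$ is upper semicontinuous there. The lift of $\mu'$ is then an equilibrium state, with $P_{\mathrm{top}}(g,\varphi)=P_{\mathrm{top}}(A,\psi)$. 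The only point needing care is that the fiber entropy contributes nothing. This holds because the contribution is at most $h_1$ times the mass on $\pi(\Delta)$, and the gap above forces that mass to vanish in the limit.

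\textbf{Uniqueness.} Let $\nu$ be an ergodic equilibrium state of $(g,\varphi)$. By the gap, $h_\nu(g)>h_1$, so $\nu(\Delta)=0$. Then $\pi$ is a measurable isomorphism mod $\nu$, so $h_{\pi_*\nu}(A)=h_\nu(g)$, and $\pi_*\nu$ is an equilibrium state of $(A,\psi)$; hence $\pi_*\nu=\mu$. The measure $\mu$ gives zero mass to $\pi(\Delta)$, since otherwise any lift would have entropy at most $h_1$ on that part. So $\mu$ has a unique lift, and therefore $\nu$ is unique; non-ergodic equilibrium states decompose into ergodic ones, so they are ruled out too. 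For hyperbolicity, the exponents along $E^s_g$ and $E^u_g$ are nonzero automatically. For the center exponents I use the second fact: a measure with a nonnegative center exponent has entropy at most $h_1$. Since $h_\nu(g)>h_1$, all center exponents are negative.

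\textbf{Maximal entropy.} Taking $\psi\equiv 0$ satisfies \eqref{eq:oscillation-condition} trivially, and $A$ has a unique measure of maximal entropy (Haar). This yields the last claim.

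\textbf{Main obstacle.} The hard step is the claim that $\mu=\pi_*\nu$ does not charge $\pi(\Delta)$ and that every measure concentrated on the non-injectivity set has entropy at most $h_1$. In the two-dimensional indecomposable setting there is no entropy-expansiveness available to fall back on. I would try to derive it from the center contraction $\lambda_s$ outside $\mathcal O$ versus $\lambda_c$ inside, as in the proof of Theorem~\ref{teo:MainTheorem}. Orbits in nontrivial fibers must spend a definite proportion of time in $\mathcal O$. Therefore their $A$-projections lie in a set of small $A$-entropy, since $\mathcal O$ is a union of small balls, and this bound is what defines $h_1$.
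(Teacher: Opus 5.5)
Your reduction is sound and follows the paper's architecture. The oscillation condition kills every ergodic measure of entropy at most $h_1$, since $h_1+\sup\psi<h_{\mathrm{top}}(A)+\inf\psi\le P_{\mathrm{top}}(A,\psi)$; this plays the role of the paper's Case~2, and splitting by $h_\nu(g)$ versus $h_1$ works as well as the paper's split by $h_{\pi_*\nu}(A)$ versus $h_0$. Measures on which $\pi$ is a.e.\ injective have the pressure of their projection. Uniqueness then comes from uniqueness for $(A,\psi)$ plus the ergodic decomposition. Your second fact is exactly the contrapositive of Proposition~\ref{pro:Main2} and gives hyperbolicity, and $\psi\equiv0$ gives the measure of maximal entropy. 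Your limiting argument for existence is superfluous: once every ergodic measure has pressure at most $P_{\mathrm{top}}(A,\psi)$, any lift of $\mu$ attains it.

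The genuine gap is your first fact: that an ergodic $\nu$ with $\nu(\Delta)>0$ has $h_\nu(g)\le h_1$. It is true, but Proposition~\ref{pro:Main2} does not give it. It is also not ``equivalent'' to a sign condition on center exponents. Negative exponents only control the orbit of $x$ infinitesimally (at best at non-uniform Pesin scales, and $g$ is only $C^1$), and this does not exclude a second point of the fiber at distance up to $2\delta$. This fact is the real content of the theorem. The paper proves it as Lemma~\ref{lem:pi-injective} combined with Lemma~\ref{le:keylemma}, using three ingredients your sketch lacks or misstates.
\begin{itemize}
\item Fibers $\pi^{-1}(\pi(x))$ lie in center leaves and have diameter $<2\delta$ under all iterates. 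This also bounds the fiber entropy by $d_c\log\lambda_c$, so $h_\nu(g)>h_1$ forces $h_{\pi_*\nu}(A)>h_0$.
\item Smallness of $\mathcal O_g$ enters through non-concentration, not through ``projections lying in a set of small $A$-entropy.'' Lemma~\ref{le:BuzziFisher}, transported through $\pi$ as in Proposition~\ref{prop:Main1}, turns $h_{\pi_*\nu}(A)>h_0$ into $\nu(\mathcal O_g)<\eta$.
\item One must iterate backward. The uniform co-norm bounds are $m(Dg^{-1}|_{E^c_g})>\lambda_s^{-1}>1$ off $\mathcal O_g$ and $>\lambda_c^{-1}$ on it. Together with Birkhoff's theorem for $g^{-1}$ and $\eta\log\lambda_c+(1-\eta)\log\lambda_s<0$, they make the center distance between $g^{-n}(x)$ and $g^{-n}(y)$ grow exponentially until it exceeds the fiber-diameter bound, a contradiction. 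Forward iteration contracts the center on average and yields nothing.
\end{itemize}
Existence, the identity $h_\nu(g)=h_{\pi_*\nu}(A)$, and uniqueness all rest on this fact. As written, your proposal reduces the theorem to its main lemma rather than proving it.
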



The final contribution of this article addresses the \textit{inverse problem of thermodynamic formalism} in the spirit of the classification of invariant measures. We aim to determine which measures $\nu$ can be realized as unique equilibrium states for some continuous potential. A closely related question for which measures $\nu$ \emph{some} continuous potential exists for which $\nu$ is an equilibrium state, without uniqueness has recently been fully characterized by Hedges~\cite{Hed26} via upper semicontinuity of the entropy map at $\nu$. Theorem~\ref{theorem D} below instead addresses the stronger requirement of uniqueness.

Let $h_0<h_1$ be the constants given by Proposition~\ref{pro:Main2}, which satisfy $h_1=h_0+d_c\log(\lambda_c)$. Denote by $\mathrm{Prob}^{h_0}_{\mathrm{erg}}(A)$ the set of ergodic measures $\mu$ for $A$ satisfying $h_{\mu}(A)>h_0$. For a given $\mu \in \mathrm{Prob}^{h_0}_{\mathrm{erg}}(A)$, let $\psi_\mu$ be the potential provided by Phelps~\cite{P} for which $\mu$ is the unique equilibrium state. To ensure that this state is not ``submerged" by the fiber entropy, we quantify its isolation via the \textit{pressure gap}, defined by
$$
\Delta_\mu := P_{\mathrm{top}}(A, \psi_\mu) - \sup \{ P_{\zeta}(A, \psi_\mu) : h_{\zeta}(A) \le h_0 \}.
$$

Note that $\Delta_\mu>0$ automatically: the set $\{\zeta\in\mathrm{Prob}(A): h_\zeta(A)\le h_0\}$ is compact, by upper semicontinuity of the entropy map for the expansive system $A$, and does not contain $\mu$; since $\mu$ is the unique equilibrium state of $(A,\psi_\mu)$, the supremum defining $P_\zeta(A,\psi_\mu)$ over that set is strictly smaller than $P_{\mathrm{top}}(A,\psi_\mu)$.

\begin{maintheorem}\label{theorem D}
Let $g:\mathbb T^d\to\mathbb T^d$ be a diffeomorphism satisfying the hypotheses of Theorem~\ref{teo:MainTheorem}, with central expansion $\lambda_c$, and set $d_c:=\dim(E^{c}_g)$. Let $\nu \in \mathrm{Prob}^{h_1}_{\mathrm{erg}}(g)$ and set $\mu = \pi_* \nu$. If
\begin{equation}\label{eq:fiber-condition-refined}
d_c\log(\lambda_c) < \Delta_\mu,
\end{equation}
then there exists a continuous potential $\varphi: M \to \mathbb{R}$ such that $\nu$ is the \emph{unique} equilibrium state of $(g, \varphi)$.
\end{maintheorem}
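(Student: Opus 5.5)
The plan is to take $\varphi:=\psi_\mu\circ\pi$, where $\psi_\mu$ is the Phelps potential for which $\mu=\pi_*\nu$ is the unique equilibrium state of $(A,\psi_\mu)$. Then we compare pressures through the semiconjugacy, using the fiber-entropy control that underlies Proposition~\ref{pro:Main2}. Two ingredients are needed. First, for every $g$-invariant $\eta$ with $\zeta=\pi_*\eta$ we have $h_\zeta(A)\le h_\eta(g)$, and Ledrappier--Walters gives
\[
h_\eta(g)\le h_\zeta(A)+\sup_{y}h_{\mathrm{top}}(g,\pi^{-1}(y)).
\]
Second, the fibers $\pi^{-1}(y)$ lie inside center plaques, where $g$ expands by at most $\lambda_c$ in $d_c$ dimensions. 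This yields the fiber-entropy bound $h_{\mathrm{top}}(g,\pi^{-1}(y))\le d_c\log\lambda_c$, the same quantity that enters $h_1=h_0+d_c\log\lambda_c$. Before using it, I will check that $\mu\in\mathrm{Prob}^{h_0}_{\mathrm{erg}}(A)$. This holds because the fibers carry at most $d_c\log\lambda_c$ entropy, so $h_\mu(A)\ge h_\nu(g)-d_c\log\lambda_c>h_1-d_c\log\lambda_c=h_0$. Hence $\psi_\mu$ and $\Delta_\mu$ are defined.

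Next I show that $\nu$ is an equilibrium state. Since $\int\varphi\,d\eta=\int\psi_\mu\,d\pi_*\eta$ and $h_\eta(g)\ge h_{\pi_*\eta}(A)$, we get $P_{\mathrm{top}}(g,\varphi)\ge P_{\mathrm{top}}(A,\psi_\mu)$, by lifting equilibrium states of $A$ to $g$-invariant measures. I expect Theorem~\ref{teo:MainTheorem} and its proof to give more than equality of topological entropy: the high-entropy part of the dynamics should be essentially injective. Concretely, the set of points with nontrivial fiber should carry only measures whose projection has entropy $\le h_0$, which is the content of Proposition~\ref{pro:Main2}. For measures $\eta$ with $h_{\pi_*\eta}(A)>h_0$, I will argue that $\pi$ is injective $\eta$-a.e., so $h_\eta(g)=h_{\pi_*\eta}(A)$. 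Applied to $\eta=\nu$, this gives $P_\nu(g,\varphi)=P_\mu(A,\psi_\mu)=P_{\mathrm{top}}(A,\psi_\mu)$.

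For uniqueness, let $\eta$ be an ergodic equilibrium state of $(g,\varphi)$ and set $\zeta=\pi_*\eta$. There are two cases.
\begin{enumerate}
\item If $h_\zeta(A)>h_0$, then $h_\eta(g)=h_\zeta(A)$, so $P_\zeta(A,\psi_\mu)=P_{\mathrm{top}}(A,\psi_\mu)$. Uniqueness for $A$ forces $\zeta=\mu$. A.e.\ injectivity of $\pi$ on the measures involved then forces $\eta=\nu$, since both are the unique lift of $\mu$.
\item If $h_\zeta(A)\le h_0$, then
\[
P_\eta(g,\varphi)\le P_\zeta(A,\psi_\mu)+d_c\log\lambda_c\le P_{\mathrm{top}}(A,\psi_\mu)-\Delta_\mu+d_c\log\lambda_c<P_{\mathrm{top}}(A,\psi_\mu),
\]
by \eqref{eq:fiber-condition-refined}, so $\eta$ is not an equilibrium state.
\end{enumerate}
Non-ergodic equilibrium states decompose into ergodic equilibrium states, so uniqueness follows. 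Existence needs no upper semicontinuity, since $\nu$ is exhibited directly.

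The main obstacle is the dichotomy "either $h_{\pi_*\eta}(A)\le h_0$ or $\pi$ is injective $\eta$-a.e." I would extract it from the proof of Theorem~\ref{teo:MainTheorem}/Proposition~\ref{pro:Main2}. The idea is that orbits visiting $\mathcal O$ with positive frequency produce nontrivial fibers, and the projections of such orbits lie in a set of $A$-entropy at most $h_0$. Points whose orbits visit $\mathcal O$ with low frequency feel the contraction $\lambda_s$ in the center, which yields trivial fibers. If only a bound on fiber entropy rather than injectivity is available, I would instead apply the argument to $\nu$ directly. Any $\eta\ne\nu$ with $\pi_*\eta=\mu$ would then have to be excluded by perturbing $\varphi$ by a small continuous term supported near $\mathrm{supp}\,\nu$, which is the delicate point.
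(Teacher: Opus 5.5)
Your plan is the paper's proof. It uses the same potential $\varphi=\psi_\mu\circ\pi$, the same check that $h_\mu(A)>h_0$ via Lemma~\ref{le:keylemma}, and the same split at $h_0$: the pressure gap $\Delta_\mu$ in the low-entropy case, the unique lift of high-entropy measures in the other, then ergodic decomposition.

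The dichotomy you flag as the main obstacle is not the content of Proposition~\ref{pro:Main2}. It is Lemma~\ref{lem:pi-injective} together with Corollary~\ref{cor:bijection}, which the paper proves by exactly the frequency mechanism you sketch. Non-concentration from Proposition~\ref{prop:Main1} gives $\nu(\mathcal O_g)<\eta$. Birkhoff's theorem for $g^{-1}$ then forces distinct points of a fiber to separate along the center leaf, contradicting the $2\delta$ bound on fiber diameters. So your fallback perturbation of $\varphi$ is unnecessary.
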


\begin{remark}
Condition~\eqref{eq:fiber-condition-refined} controls the fiber complexity $d_c\log(\lambda_c)$ introduced by the perturbation. Unlike the uniqueness result of Buzzi et al. \cite{BFSV2012}, which requires vanishing fiber entropy, Theorem~\ref{theorem D} allows positive fiber entropy provided it is dominated by the pressure gap $\Delta_\mu$ as occurs, for instance, in the example of Proposition~\ref{example}. Since $\Delta_\mu>0$ for every  $\mu\in\mathrm{Prob}^{h_0}_{\mathrm{erg}}(A)$ and $\lambda_c$ can be taken arbitrarily close to 1 by shrinking the $C^1$-neighborhood of $A$ in Theorem~\ref{teo:MainTheorem}, condition~\eqref{eq:fiber-condition-refined} holds for every such $\mu$. 
\end{remark}

\section{Proof of Theorem \ref{teo:MainTheorem}}

In this section, {we present the proof of Theorem \ref{teo:MainTheorem}. For this, we first establish some technical results from which Theorem~\ref{teo:MainTheorem} follows.}

{The first subsection is devoted to} establish a rigidity principle: invariant measures with entropy close to $h_{\mathrm{top}}(A)$ cannot concentrate a definite proportion of their mass in arbitrarily small regions. More precisely, we show that, within a sufficiently small $C^0$-neighborhood of a linear Anosov diffeomorphism $A$, invariant ergodic measures with high entropy (close to $h_{\mathrm{top}}(A)$) cannot concentrate their mass inside balls of small radius. This property is inherited from $A$ via topological stability, and will be used to prevent the appearance of invariant measures with entropy larger than $h_{\mathrm{top}}(A)$ under the partially hyperbolic perturbations considered in this work.

Meanwhile, in the second subsection, we establish the precise conditions for which the rigidity of the topological entropy is obtained. Concretely, for the class of $(N,r)$--\emph{localized deformations} of $A$, {we will see that} this fact follows from controlling the entropy that the central direction can support and the uniform growth rate of the unstable direction. {This help us, in particular, to get a  control of the}  topological entropy that the fibers can carry.

\subsection{Robust non--concentration.}\label{subsec:MeasureSmall} 

{In this subsection we prove that, in a small $C^0$–neighborhood of an Anosov diffeomorphism, the non-concentration property for invariant measures with entropy close to the maximum persists uniformly. Both, the statement and its proof, are formulated in the general setting of Anosov diffeomorphisms on compact manifolds.} 

The main result of this subsection is the following:

\begin{propo}\label{prop:Main1}
Let $A: M\to M$ be an Anosov diffeomorphism  on a compact Riemannian manifold $M$. Let $\eta\in (0,1)$, and let 
$$
h_0\in \big(h_{\mathrm{top}}(A)(1-\eta),\, h_{\mathrm{top}}(A)\big).
$$
Then, there exist constants $r_1>0$ and $\varepsilon>0$ such that for every homeomorphism $g \in \mathcal{U}^0_{\varepsilon}(A)$ and every $\nu \in \operatorname{Prob}_{\mathrm{erg}}(g)$ satisfying $\pi_* \nu \in \operatorname{Prob}_{\mathrm{erg}}^{h_0}(A)$, one has
$$
\nu\big(B(x,r)\big) < \eta, \quad \forall x \in M, \;\forall r\in (0,r_1].
$$
\end{propo}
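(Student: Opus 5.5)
The plan is to prove the non-concentration statement first for the linear model $A$ itself (or any Anosov $A$), and then transfer it to $g\in\mathcal{U}^0_\varepsilon(A)$ through the semiconjugacy $\pi$. Since $\pi$ is $C^0$-close to the identity, small balls for $\nu$ are mapped into slightly larger small balls for $\pi_*\nu$. The hypothesis in the statement is phrased entirely in terms of $\pi_*\nu\in\operatorname{Prob}^{h_0}_{\mathrm{erg}}(A)$, so no information about $g$ beyond topological stability should be needed.

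\emph{Step 1 (the estimate for $A$).} I claim there is $r_A>0$ such that every $\mu\in\operatorname{Prob}(A)$ with $h_\mu(A)>h_0$ satisfies $\mu(\overline{B}(y,s))<\eta$ for all $y\in M$ and all $s\le r_A$. I would argue by contradiction.

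Suppose there are $s_k\to0$, points $y_k$, and $A$-invariant $\mu_k$ with $h_{\mu_k}(A)>h_0$ and $\mu_k(\overline{B}(y_k,s_k))\ge\eta$. Passing to a subsequence, $y_k\to y$ and $\mu_k\to\mu$ in the weak$^*$ topology. For any fixed $\delta>0$ the ball $\overline{B}(y_k,s_k)$ is eventually contained in $\overline{B}(y,\delta)$. Since $\overline{B}(y,\delta)$ is closed, the portmanteau theorem gives $\mu(\overline{B}(y,\delta))\ge\eta$ for every $\delta>0$, and hence $\mu(\{y\})\ge\eta$.

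An invariant probability with an atom at $y$ forces $y$ to be periodic, because otherwise the infinitely many distinct iterates of $y$ would all carry the same mass. Let $\mathcal{O}(y)$ be its orbit and $\delta_{\mathcal{O}(y)}$ the uniform measure on it. Then $\mu=t\,\delta_{\mathcal{O}(y)}+(1-t)\mu'$ with $t\ge\eta$ and $\mu'$ invariant. By affinity of entropy,
\[
h_\mu(A)\le (1-\eta)\,h_{\mathrm{top}}(A)<h_0 .
\]
On the other hand, $A$ is expansive, so $\zeta\mapsto h_\zeta(A)$ is upper semicontinuous. This gives $h_\mu(A)\ge\limsup_k h_{\mu_k}(A)\ge h_0$, which is a contradiction. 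This is exactly where the assumption $h_0>(1-\eta)h_{\mathrm{top}}(A)$ enters.

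\emph{Step 2 (transfer to $g$).} By Walters' topological stability \cite{W1}, for every $\rho>0$ there is $\varepsilon>0$ such that each $g\in\mathcal{U}^0_\varepsilon(A)$ admits a semiconjugacy $\pi$ with $d(\pi(z),z)<\rho$ for all $z$. I would choose $\rho=r_A/3$, the corresponding $\varepsilon$, and $r_1=r_A/3$. Then for $x\in M$ and $r\le r_1$,
\[
\pi\big(B(x,r)\big)\subset B(x,r+\rho)\subset \overline{B}(x,r_A),
\]
so $\nu(B(x,r))\le\nu\big(\pi^{-1}(\overline{B}(x,r_A))\big)=\pi_*\nu(\overline{B}(x,r_A))<\eta$. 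The last inequality is Step 1 applied to $\mu=\pi_*\nu$, which has entropy larger than $h_0$ by hypothesis. Note that the ball here is centred at $x$ rather than at $\pi(x)$, which is harmless because $\pi$ is close to the identity. Ergodicity is not even used in this step, and the uniformity in $x$ comes for free from the compactness argument in Step 1.

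\emph{Main obstacle.} I expect the main difficulty to lie in Step 1, specifically in making the limiting argument rigorous. One has to work with closed balls so that mass bounds pass to weak$^*$ limits, and to check that the atom forces a periodic orbit carrying mass at least $\eta$. Both facts, together with upper semicontinuity of entropy for expansive maps, are standard, so I expect this to go through without difficulty. An alternative is a direct counting argument with a generating partition in which one element is the small ball. I would avoid that route because it requires controlling cylinder counts for orbits that spend a fraction $\eta$ of their time in the ball.
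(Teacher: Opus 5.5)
Your proposal is correct and follows the same structure as the paper's proof. Your Step 2 is exactly the paper's argument; the paper takes $\delta<\min\{\delta_0,r_0/4\}$ and $r_1=r_0-2\delta$, and compares with balls centred at $\pi(x)$ rather than $x$, which is an inessential difference. Your Step 1 is Lemma~\ref{le:BuzziFisher}, which the paper cites from Buzzi--Fisher without proof. Your compactness argument for it is valid and makes the proof self-contained: the weak$^*$ limit has an atom of mass at least $\eta$, hence a periodic orbit of mass at least $\eta$, and affinity of entropy then contradicts upper semicontinuity for the expansive map $A$. It also covers non-ergodic measures.
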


The proof is obtained by combining the non-concentration property for Anosov diffeomorphisms due to Buzzi--Fisher \cite{BF} with the semiconjugacy provided by topological stability proven by Walters in \cite{W1}.

In order to prove the above proposition, we first recall the result of Buzzi--Fisher.

\begin{lemma}[Lemma 5.1 in \cite{BF}]\label{le:BuzziFisher} 
Let $A: M\to M$ be an Anosov diffeomorphism. For every $\eta > 0$ and every $h_0 \in \big(h_{\mathrm{top}}(A)(1-\eta), h_{\mathrm{top}}(A)\big)$, there exists $r_0 > 0$ such that for every $\mu \in \mathrm{Prob}_{\mathrm{erg}}^{h_0}(A)$, one has $\mu\big(B(x, r_0)\big) < \eta$ for all $x \in M$.
\end{lemma}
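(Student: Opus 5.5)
The plan is to argue by contradiction, combining a counting (Katok-type) entropy estimate with the fact that, for an Anosov map, a very small ball is contained in a two-sided Bowen ball of long length. Fix an expansivity-type scale $\delta>0$ for $A$, and let $\Lambda>1$ bound $\|DA^{\pm1}\|$. Given $k\in\mathbb N$, choose $r_0=r_0(k)$ with $r_0\Lambda^{k}<\delta/2$. Then for every $x$ and every $y,z\in B(x,r_0)$ we have $d(A^jy,A^jz)<\delta$ for all $|j|\le k$. Thus a visit to $B(x,r_0)$ at time $t$ pins down the orbit segment on $[t-k,t+k]$ to within $\delta$, independently of $x$; this independence is what makes the final bound uniform in $x$. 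The goal is to show that if $k$ is large, every ergodic $\mu$ with $\mu(B(x,r_0))\ge\eta$ has $h_\mu(A)\le h_0$.

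Suppose then that $\mu$ is ergodic and $\mu(B(x,r_0))\ge\eta$; enlarge to a closed ball if needed for Birkhoff. By the ergodic theorem, for $n$ large there is a set $G_n$ with $\mu(G_n)>1/2$ such that every $y\in G_n$ visits $B(x,r_0)$ at a set of times $T(y)\subset[0,n)$ of cardinality at least $(\eta-\epsilon)n$. Put $W(y)=\bigcup_{t\in T(y)}[t-k,t+k]\cap[0,n)$. Since $W(y)\supset T(y)$, the "free'' set $F(y)=[0,n)\setminus W(y)$ has cardinality at most $(1-\eta+\epsilon)n$. Moreover $W(y)$ is a union of intervals of length at least $k$ (up to boundary effects), so it has at most $n/k+2$ components.

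Next I would cover $G_n$ by few $(n,2\delta)$-Bowen balls. Group the points of $G_n$ by the combinatorial pattern of $W(y)$. Such a pattern is a choice of at most $2(n/k+2)$ endpoints, so there are at most $e^{n(H(1/k)+\epsilon)}$ patterns, where $H$ is the binary entropy function. For a fixed pattern, the free intervals are covered by $(\ell,\delta)$-Bowen balls. By the definition of $h_{\mathrm{top}}(A)$, an interval of length $\ell$ needs at most $C_\epsilon e^{\ell(h_{\mathrm{top}}(A)+\epsilon)}$ of them. On each window interval a single $\delta$-choice suffices, because all points visiting the ball are $\delta$-close there. Gluing these choices gives a cover of $G_n$ by at most
\[
e^{n(H(1/k)+\epsilon)}\,C_\epsilon^{\,n/k+2}\,e^{(1-\eta+\epsilon)n(h_{\mathrm{top}}(A)+\epsilon)}
\]
sets of $d_n$-diameter at most $4\delta$. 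Katok's entropy formula, applied with $\mu(G_n)>1/2$ and $4\delta$ below the expansivity constant, then yields
\[
h_\mu(A)\le (1-\eta)h_{\mathrm{top}}(A)+H(1/k)+\tfrac{\log C_\epsilon}{k}+O(\epsilon).
\]

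Since $h_0>(1-\eta)h_{\mathrm{top}}(A)$, I would fix $\epsilon$ small and then $k$ large so that the right-hand side is at most $h_0$, and define $r_0=r_0(k)$ accordingly. This contradicts $h_\mu(A)>h_0$, so $\mu(B(x,r_0))<\eta$ for every $x$ and every $\mu\in\mathrm{Prob}^{h_0}_{\mathrm{erg}}(A)$.

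The main obstacle is the bookkeeping in the gluing step. The constant $C_\epsilon$ is paid once per free segment, so it must be absorbed by the bound of at most $n/k$ segments, which is exactly why $k$ must be chosen after $\epsilon$. The gluing also has to produce genuine $(n,c\delta)$-balls, which is where the choice of $\delta$ below the expansivity scale and Katok's formula come in.
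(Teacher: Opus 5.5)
The paper does not prove this lemma; it quotes it from Buzzi--Fisher \cite{BF}, so your argument has to stand on its own. Your strategy is sound and yields the right bound: Birkhoff for the visit frequency, pinning the orbit on a window of length $2k+1$ around each visit, paying $C_\epsilon e^{\ell(h_{\mathrm{top}}(A)+\epsilon)}$ on each free interval, and closing with Katok's formula. However, the gluing step as written is false.

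Grouping the points of $G_n$ only by the pattern of $W(y)$ does not make them $\delta$-close on the windows. A visit gives you only this: if $A^t y\in B(x,r_0)$, then $d(A^jy,A^{j-t}x)<\delta/2$ for $|j-t|\le k$. Two points with the same $W$ may have visited at different times inside one component. At a given $j$ they are then close to $A^{j-t}x$ and $A^{j-t'}x$ respectively, and these are unrelated unless the orbit of $x$ is special. For instance, a component $[a,a+10k]$ is produced both by visits at $a+k,a+3k,\dots,a+9k$ and by visits at $a+k,a+2k+1,a+4k+1,a+6k+1,a+8k+1,a+9k$. At time $a+3k$ the first orbit is $\delta/2$-close to $x$ and the second to $A^{k-1}x$. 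Recording the full visit set $T(y)$ instead would cost up to $2^n$ classes, which destroys the estimate.

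The missing idea is to encode a sparse set of visit times. Put $t_1=\min T(y)$ and $t_{i+1}=\min\{t\in T(y):t>t_i+k\}$.
\begin{itemize}
\item There are at most $n/k+1$ such times, hence at most $e^{n(H(1/k)+\epsilon)}$ possible selections for large $n$.
\item Every $t\in T(y)$ lies within $k$ of some $t_i$, so $W'(y)=\bigcup_i[t_i-k,t_i+k]\cap[0,n)$ still contains $T(y)$. The free set therefore has at most $(1-\eta+\epsilon)n$ elements, in at most $n/k+2$ intervals.
\item For points with the same selection, assign to each $j\in W'(y)$ the smallest $t_i$ with $|j-t_i|\le k$. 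All these points are then $\delta/2$-close to $A^{j-t_i}x$ at time $j$, so one choice per window really suffices.
\end{itemize}
With this change your displayed count and final inequality go through as stated. (Your endpoint count would actually have given $H(2/k)$ rather than $H(1/k)$, which is immaterial.)

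You should also justify using Katok's formula at the fixed scale $4\delta$. Write $B_n(y,\rho)=\{z:d(A^iz,A^iy)<\rho,\ 0\le i<n\}$. If $4\delta$ is below the expansivity constant, then for each $\delta'>0$ there is $m$ with $B_n(y,4\delta)\subset A^{-m}B_{n-2m}(A^my,\delta')$ for all $n$. By invariance of $\mu$, the covering numbers at scale $4\delta$ therefore dominate those at every smaller scale, up to the shift $n\mapsto n-2m$. Katok's formula then bounds $h_\mu(A)$ by the fixed-scale growth rate.
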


We also recall that Anosov diffeomorphisms are topologically stable. 

\begin{lemma}\label{le:TopEstab} For every Anosov diffeomorphism $A : M \to M$ there exists $\delta_0 > 0$ such that for every $0 < \delta < \delta_0$, there exists $\varepsilon > 0$ with the following property: if $g : M \to M$ satisfies $\text{dist}_{C^0}(g, A) < {\varepsilon}$, then there exists a unique continuous and surjective map $\pi : M \to M$ such that 
$$
\pi \circ g = A \circ \pi\quad \text{and} \quad \text{dist}_{C^0}(\pi, \mathrm{Id}) < \delta.
$$
\end{lemma}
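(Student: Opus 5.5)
We plan to derive the statement from the two basic properties of Anosov diffeomorphisms: \emph{expansivity} and the \emph{shadowing property}. Let $e>0$ be an expansivity constant for $A$: if $d(A^n y, A^n z)\le e$ for all $n\in\mathbb{Z}$, then $y=z$. Set $\delta_0 := e/2$, and take $\delta_0$ also smaller than the injectivity radius of $M$.

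Given $0<\delta<\delta_0$, the shadowing lemma provides $\varepsilon>0$ such that every $\varepsilon$-pseudo-orbit $(x_n)_{n\in\mathbb{Z}}$ of $A$, meaning $d(A x_n, x_{n+1})<\varepsilon$ for all $n$, is $\delta$-shadowed by a true orbit: there is $y$ with $d(A^n y, x_n)\le\delta$ for all $n$. Any two such shadowing points $y,y'$ satisfy $d(A^n y, A^n y')\le 2\delta<e$ for all $n$, so by expansivity the shadowing point is unique.

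The construction of $\pi$ then proceeds in four steps.

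\textbf{Definition of $\pi$.} If $\mathrm{dist}_{C^0}(g,A)<\varepsilon$, then for each $x\in M$ the sequence $(g^n x)_{n\in\mathbb{Z}}$ is an $\varepsilon$-pseudo-orbit of $A$, since $d(A(g^n x), g^{n+1}x)<\varepsilon$. We define $\pi(x)$ to be its unique $\delta$-shadowing point. The case $n=0$ of the shadowing estimate gives $\mathrm{dist}_{C^0}(\pi,\mathrm{Id})\le\delta$; replacing $\delta$ by a slightly smaller value yields the strict inequality.

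\textbf{Conjugacy.} The point $A(\pi(x))$ $\delta$-shadows the shifted pseudo-orbit $(g^{n+1}x)_n = (g^n(gx))_n$. By uniqueness of the shadowing point, $\pi(gx)=A\pi(x)$.

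\textbf{Continuity.} Let $x_k\to x$, and let $y$ be any accumulation point of $\pi(x_k)$. For each fixed $n$, the maps $A^n$ and $g^n$ are continuous, so passing to the limit in $d(A^n\pi(x_k), g^n x_k)\le\delta$ gives $d(A^n y, g^n x)\le\delta$. Since this holds for every $n$, uniqueness forces $y=\pi(x)$. By compactness of $M$, this shows $\pi(x_k)\to\pi(x)$.

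\textbf{Uniqueness of $\pi$.} Suppose $\pi'$ is another continuous map with $\pi'\circ g=A\circ\pi'$ and $\mathrm{dist}_{C^0}(\pi',\mathrm{Id})<\delta$. Then $d(A^n\pi'(x), g^n x) = d(\pi'(g^n x), g^n x)<\delta$ for all $n$. Hence $\pi'(x)$ also $\delta$-shadows the orbit of $x$, and so $\pi'(x)=\pi(x)$.

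\textbf{Surjectivity (main obstacle).} This is the step I expect to be the main obstacle, since it is not dynamical but topological. Because $\delta$ is smaller than the injectivity radius, $\pi$ is homotopic to $\mathrm{Id}$ through the geodesic homotopy $H_t(x)=\exp_x\bigl(t\exp_x^{-1}\pi(x)\bigr)$. A map homotopic to the identity on a closed manifold has degree one. Here we use $\mathbb{Z}$-degree in the orientable case and $\mathbb{Z}/2$-degree otherwise, or equivalently the statement that $\pi_*$ is the identity on $H_{\dim M}(M;\mathbb{Z}/2)\neq0$. Such a map is surjective: if some point $p$ were missed, $\pi$ would factor through $M\setminus\{p\}$, whose top homology vanishes, which is a contradiction.

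Alternatively, surjectivity can be obtained dynamically. For any $y\in M$, the $A$-orbit of $y$ is a pseudo-orbit for $g$. One could then try to show that the set of $y$ lying in the image of $\pi$ is open and closed, but the degree argument is cleaner and is the one we would use.
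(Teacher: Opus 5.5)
The paper does not prove this lemma; it quotes it as Walters' topological stability theorem \cite{W1}. Your argument is a correct, self-contained proof along the standard lines.
\begin{itemize}
\item Expansivity plus the shadowing lemma give existence of $\pi$, the conjugacy $\pi\circ g=A\circ\pi$, continuity, and uniqueness. The constant is explicit: $\delta_0=e/2$, also taken below the injectivity radius.
\item Surjectivity follows because $\pi$ is homotopic to the identity through the geodesic homotopy, hence has ($\mathbb{Z}/2$-)degree one.
\end{itemize}
What your route adds over the paper is that the cited result is derived from two basic properties of Anosov maps. The dynamical alternative for surjectivity you sketch is not needed.

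You use one hypothesis silently and should state it: $g$ must be a homeomorphism. Both the two-sided pseudo-orbit $(g^n x)_{n\in\mathbb{Z}}$ and the continuity of $g^n$ for $n<0$ in your continuity step require it. This is the intended reading: Walters' theorem is for homeomorphisms, and the paper applies the lemma to homeomorphisms in Proposition~\ref{prop:Main1}.

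The hypothesis cannot be dropped. Take $g=A\circ\kappa$, where $\kappa$ is $C^0$-close to the identity and collapses a tiny stable segment $J$, with endpoints $y,y'$, to a point.
\begin{itemize}
\item Since $g(y)=g(y')$, the relation $\pi\circ g=A\circ\pi$ forces $\pi(y)=\pi(y')$.
\item Choose $J$ so that its backward $A$-iterates avoid the support of $\kappa$ for a while. Then $(A^{-k}y)$ and $(A^{-k}y')$ are backward $g$-branches, which forces $d(A^{-k}y,A^{-k}y')<2\delta$ for those $k$.
\item These iterates separate exponentially, so they exceed $2\delta$ before entering the support of $\kappa$. This is a contradiction, and it occurs for every $\varepsilon>0$.
\end{itemize}
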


Next, we prove the main result of this subsection.

\begin{proof}[Proof of Proposition \ref{prop:Main1}] 
Fix $\eta>0$ and $h_0\in \big(h_{\mathrm{top}}(A)(1-\eta),\, h_{\mathrm{top}}(A)\big)$, and let $r_0>0$ be given by Lemma \ref{le:BuzziFisher}.

Let {$0<\delta< \min\lbrace \delta_0, r_0/4\rbrace$}. By Lemma \ref{le:TopEstab} there exists $\varepsilon > 0$ such that every homeomorphism $g \in \mathcal{U}_{\varepsilon}^0(A)$ is semi-conjugate to $A$ via a continuous surjective map $\pi : M \to M$ satisfying $\operatorname{dist}_{C^0}(\pi, \mathrm{Id}) < \delta$. In particular, for every invariant probability measure $\nu \in \mathrm{Prob}_{\mathrm{erg}}(g)$ with $\mu := \pi_*\nu \in \mathrm{Prob}_{\mathrm{erg}}^{h_0}(A)$, one has by Lemma~\ref{le:BuzziFisher} that $\mu\big(B(\pi(x), r_0)\big) < \eta$ for all $x \in M$.

Define $r_1 := r_0 - 2\delta$. Since $\delta < r_0/4$, we have $r_1 > r_0/2 > 0$. We claim that for every $x\in M$ and $r\in (0,r_1]$, the inclusion $B(x, r) \subset \pi^{-1}\big(B(\pi(x), r_0)\big)$ holds. Indeed, for any $y \in B(x, r)$, the triangle inequality yields
\begin{align*}
\operatorname{dist}\big(\pi(y), \pi(x)\big) &\leq \operatorname{dist}\big(\pi(y), y\big) + \operatorname{dist}(y, x) + \operatorname{dist}\big(x, \pi(x)\big) \\
&< \delta + r + \delta \leq r_1 + 2\delta = r_0.
\end{align*}
Thus, $\pi(y) \in B(\pi(x), r_0)$. Therefore,
$$
\nu\big(B(x, r)\big) \leq \nu\Big(\pi^{-1}\big(B(\pi(x), r_0)\big)\Big) = \mu\big(B(\pi(x), r_0)\big) < \eta,
$$
which concludes the proof.
\end{proof}

\subsection{Negative central Lyapunov exponents}
Recall that for any point $x\in \mathbb{T}^d$ we define the maximal asymptotic expansion along the center bundle as
$$
\lambda^{c}(g,x) = \limsup_{n \to \infty} \frac{1}{n}\log\left(\left\|Dg^n(x)\mid_{E^{c}_g(x)}\right\|\right).
$$
By Oseledets' Multiplicative Ergodic Theorem (see \cite[Theorem S.2.9]{HK95}), the Lyapunov exponents are well-defined for $\mu$-almost every point. Furthermore, since $\mu$ is ergodic, these exponents are constant almost everywhere.

In this subsection, we prove that the Lyapunov exponents along the central direction are negative for ergodic measures with large entropy for diffeomorphisms in a $C^1$-neighborhood of a $(N,r)$-localized weak deformation $g_0$ that is $C^0$-close enough to $A$, even though the $C^1$-distance to $A$ may be large. As we will see in Section \ref{constructionCarrasco}, this separation in the $C^1$-topology allows the construction of partially hyperbolic diffeomorphisms whose center-stable leaves carry positive topological entropy, while the total topological entropy of the system remains equal to $h_{\mathrm{top}}(A)$.

Recall that $A \colon \mathbb{T}^d \to \mathbb{T}^d$ denotes a linear Anosov automorphism admitting a dominated splitting of the form
\begin{equation*}
T\mathbb{T}^d = E^{s}_A \oplus E^{ws}_A \oplus E^u_A,
\end{equation*}
where $E^{s}_A \oplus E^{ws}_A$ is the stable bundle of $A$ with $\lambda_A^{ws}=\|DA\mid_{E^{ws}_A}\|$, and $E^u_A$ is the unstable bundle with $\lambda_A^u\!:= m(DA_{E^{u}_A})$. The main result of this section is the following proposition:

\begin{propo}\label{pro:Main2}
{Let $A\colon \mathbb{T}^d \to \mathbb{T}^d$ be a linear Anosov automorphism with splitting $T\mathbb{T}^d = E^{s}_A \oplus E^{ws}_A \oplus E^u_A$. Let consider the constant
\begin{displaymath}
    C_1(A)=\exp\left(\frac{\log(\lambda_A^{ws}) + \sqrt{\log(\lambda_A^{ws}) \left[ \log(\lambda_A^{ws}) - \frac{4h_{\mathrm{top}}(A)}{d} \right]}}{2}\right).
\end{displaymath}
Fix $\lambda_c\in \left[1,\, \min\left\{C_1(A),\, \lambda_A^u\right\}\right)$, and let 
\begin{displaymath}
    C_2(A,\lambda_c)= \exp\left(\frac{d_c\log^2(\lambda_c)}{d_c\log(\lambda_c)-h_{\mathrm{top}}(A)}\right),
\end{displaymath}
where $d_c=\dim(E^{ws}_A)$. Take $\lambda_s\in \left(\lambda_A^{ws},\,C_2(A,\lambda_c)\right)$. Then, for each $N\geq 1$, there exist constants $r_1>0$ and $\varepsilon>0$ such that for every $0<r\leq r_1$ and every $(N,r)$-localized deformation $g_0$ of $A$ in $\mathcal{U}_{\varepsilon}^0(A)\cap \mathrm{PH}_A^c(\mathbb{T}^d)$ satisfying
\begin{equation}\label{ineq:Open}
\sup_{x\in \mathcal{O}}\left\|Dg_0(x)\mid_{E^c_{g_0}(x)}\right\|< \lambda_c, \qquad  \sup_{x\in \mathbb{T}^d\setminus \mathcal{O}} \left\|Dg_0(x)\mid_{E^c_{g_0}(x)}\right\|< \lambda_{s},
\end{equation}
there exist $h_1>0$ and a $C^1$--neighborhood
$\mathcal U'$ of $g_0$
such that for every $g\in\mathcal U'$
and every
$\nu\in \mathrm{Prob}^{h_1}_{\mathrm{erg}}(g)$, the $\nu$-center Lyapunov exponents are negative.}
\end{propo}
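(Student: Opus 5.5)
The plan is to bound the top center exponent of $\nu$ by a weighted average of the two local bounds in \eqref{ineq:Open}. The weights are controlled by Proposition~\ref{prop:Main1}, which says that high-entropy measures put little mass on the deformation region $\mathcal{O}$.

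First I choose the constants. Set $\eta$ and $h_0\in(h_{\mathrm{top}}(A)(1-\eta),h_{\mathrm{top}}(A))$ so that
\[
\eta N\log\lambda_c+(1-N\eta)\log\lambda_s<0 .
\]
The natural choice is to take $N\eta$ slightly smaller than $-\log\lambda_s/(\log\lambda_c-\log\lambda_s)$. I expect the explicit expressions for $C_1(A)$ and $C_2(A,\lambda_c)$ to encode exactly the compatibility of this requirement with the entropy constraint $h_0>h_{\mathrm{top}}(A)(1-\eta)$, once $h_1=h_0+d_c\log\lambda_c$ is imposed. I will verify this by a quadratic computation in $\log\lambda_c$, which is where the square root in $C_1(A)$ should come from. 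Having fixed these constants, I apply Proposition~\ref{prop:Main1} to obtain $r_1,\varepsilon$ such that every ergodic $\nu$ of any $g\in\mathcal U^0_\varepsilon(A)$ with $h_{\pi_*\nu}(A)>h_0$ satisfies $\nu(B(x,r))<\eta$ for all $r\le r_1$.

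Second, I pass from $\nu$ to $\pi_*\nu$. Let $g$ lie in a small $C^1$-neighborhood $\mathcal U'\subset\mathcal U^0_\varepsilon(A)$ of $g_0$ and let $\nu\in\mathrm{Prob}^{h_1}_{\mathrm{erg}}(g)$. I need $h_{\pi_*\nu}(A)>h_0$. By the Ledrappier–Walters formula, $h_\nu(g)\le h_{\pi_*\nu}(A)+\sup_y h_{\mathrm{top}}(g,\pi^{-1}(y))$. The fibers $\pi^{-1}(y)$ lie in center(-stable) plaques, because $\pi$ is $C^0$-close to the identity and $g$ is dynamically coherent near $A$ by \cite{FPS14}. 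On these plaques the volume-growth bound gives $h_{\mathrm{top}}(g,\pi^{-1}(y))\le d_c\log\lambda_c$, since $\|Dg|_{E^c}\|<\lambda_c$ everywhere: $\lambda_s<1\le\lambda_c$ off $\mathcal O$. Hence $h_{\pi_*\nu}(A)>h_1-d_c\log\lambda_c=h_0$. This also justifies the stated value of $h_1$.

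Third, I estimate the exponent. The conditions \eqref{ineq:Open} are $C^1$-open and $E^c_g$ varies continuously with $g$, so after shrinking $\mathcal U'$ they still hold for $g$. Submultiplicativity together with Birkhoff's theorem gives
\[
\lambda^c(g,x)\le\int\log\|Dg|_{E^c_g}\|\,d\nu\le\nu(\mathcal O)\log\lambda_c+(1-\nu(\mathcal O))\log\lambda_s .
\]
Since $\nu(\mathcal O)\le\sum_i\nu(B(p_i,r))<N\eta$ and $\log\lambda_c\ge\log\lambda_s$, the right-hand side is negative by the choice made in the first step. Therefore all center exponents are negative.

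The main obstacle is the fiber-entropy bound in the second step. One must show that the fibers of $\pi$ are contained in center leaves, or at least in sets whose entropy is controlled by center growth, and this has to hold uniformly over the $C^1$-neighborhood. If that step is delicate, I would fall back on controlling $h_{\mathrm{top}}(g,\pi^{-1}(y))$ directly. Fibers have diameter at most $2\delta$, and $g$ is uniformly expanding along $E^u$. Hence any fiber-separated set grows only along $E^s\oplus E^c$, where the volume growth is at most $\lambda_c^{d_c}$ per iterate.
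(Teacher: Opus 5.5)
Your argument follows the paper's proof step for step. You choose $\eta$ so that a weighted average of $\log\lambda_c$ and $\log\lambda_s$ is negative, and you get $h_{\pi_*\nu}(A)>h_0$ from Ledrappier--Walters with fibers inside center leaves (Lemma~\ref{le:keylemma}). You then bound the mass of the deformation balls by Proposition~\ref{prop:Main1} and conclude with submultiplicativity and Birkhoff. Keeping the same set $\mathcal O$ for nearby $g$ is a clean way to handle the $C^1$-neighborhood, since the strict inequalities hold on the compact sets $\overline{\mathcal O}$ and $\mathbb T^d\setminus\mathcal O$.

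The step that will not work as you expect is the verification in your first paragraph. By the computation in Lemma~\ref{le:Aux}, the hypothesis $\lambda_s<C_2(A,\lambda_c)$ is equivalent to
\[
\frac{d_c\log\lambda_c}{h_{\mathrm{top}}(A)}<\frac{\log\lambda_s}{\log(\lambda_s/\lambda_c)},
\]
that is, to $I_2\neq\emptyset$. The bound $\lambda_c<C_1(A)$ only makes the range for $\lambda_s$ nonempty, and $N$ appears nowhere.

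You impose two requirements. First, $h_1=h_0+d_c\log\lambda_c<h_{\mathrm{top}}(A)$ with $h_0>h_{\mathrm{top}}(A)(1-\eta)$, which forces $\eta>d_c\log\lambda_c/h_{\mathrm{top}}(A)$. Second, $N\eta<\log\lambda_s/\log(\lambda_s/\lambda_c)$. Together they need the displayed inequality with an extra factor $N$ on the left. That can fail for $N\ge2$ as soon as $\lambda_c>1$ and $(\lambda_c,\lambda_s)$ is near the edge of the admissible region.

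Because the statement only asks for \emph{some} $h_1>0$, you can still close the literal claim by taking $\eta$ tiny. But then $C_1,C_2$ are never used and $h_1$ may exceed $h_{\mathrm{top}}(A)$. That makes the proposition useless in the proof of Theorem~\ref{teo:MainTheorem}, whose case split needs $h_1<h_{\mathrm{top}}(A)$.

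What is actually needed is non-concentration for the whole union, $\nu(\mathcal O)<\eta$ for $r\le r_1(N)$, rather than a sum of single-ball bounds. The paper's proof writes $\nu(\mathcal O_g)\le\sum_i\nu(B(p_i,r))<\eta$, which does not follow from Proposition~\ref{prop:Main1} as stated either. So your careful bookkeeping has exposed a real issue.

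The Katok-type counting behind Lemma~\ref{le:BuzziFisher} adapts to unions of $N$ balls:
\begin{itemize}
\item A visit to a ball of radius $r$ pins the orbit, up to the expansivity scale, on a time window of length $\ell(r)\asymp\log(1/r)$.
\item Pinning times can be selected greedily at least $\ell(r)$ apart, with their windows still covering all visit times.
\item Recording these times and which of the $N$ balls is visited costs entropy of order $(\log\ell(r)+\log N)/\ell(r)$ per unit time. This tends to $0$ as $r\to0$ for fixed $N$, and it is where the $N$-dependence of $r_1$ enters.
\end{itemize}
The transfer to $g$ then works as in Proposition~\ref{prop:Main1}, via $\mathcal O\subset\pi^{-1}\bigl(\bigcup_iB(\pi(p_i),r+2\delta)\bigr)$. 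With this union estimate, any $\eta\in I_2$ gives $h_0<h_1<h_{\mathrm{top}}(A)$ under the $N$-independent constants $C_1,C_2$.
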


{To prove this proposition, we first establish a control over invariant measures with large entropy. More precisely, we have the following lemma.}

\begin{lemma}\label{le:keylemma}
Let $0<\delta<\delta_0$ and $\varepsilon>0$ given by Lemma \ref{le:TopEstab}. Assume that $g_0\in \mathcal{U}^0_{\varepsilon}(A)\cap \mathrm{PH}^c_A(\mathbb{T}^d)$. Then, for every $h_0>0$, there exist $h_1>h_0$ such that
$$
\pi_*\nu \in \mathrm{Prob}_{erg}^{{h}_0}(A)
$$
for every $\nu \in \mathrm{Prob}^{{h}_1}_{erg}(g_0)$. 
\end{lemma}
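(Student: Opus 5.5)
We will compare the entropy of an ergodic measure $\nu$ for $g_0$ with the entropy of its projection $\mu=\pi_*\nu$. The tool is the Ledrappier--Walters relative variational principle for the factor map $\pi\colon(\mathbb{T}^d,g_0)\to(\mathbb{T}^d,A)$:
$$
h_\nu(g_0)\le h_{\pi_*\nu}(A)+\sup_{y\in\mathbb{T}^d} h_{\mathrm{top}}\big(g_0,\pi^{-1}(y)\big).
$$
Here $h_{\mathrm{top}}(g_0,\pi^{-1}(y))$ is Bowen's entropy of the compact, generally non-invariant fiber $\pi^{-1}(y)$. Setting $h_{\mathrm{fib}}:=\sup_y h_{\mathrm{top}}(g_0,\pi^{-1}(y))$, it is enough to show $h_{\mathrm{fib}}<\infty$ and take $h_1:=h_0+h_{\mathrm{fib}}$. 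Indeed, if $h_\nu(g_0)>h_1$, the inequality gives $h_\mu(A)>h_0$. Moreover $\mu$ is ergodic, being a factor of an ergodic measure, so $\mu\in\mathrm{Prob}^{h_0}_{erg}(A)$. Two remarks on this choice of $h_1$:
\begin{itemize}
\item Strict inequality $h_1>h_0$ holds only if $h_{\mathrm{fib}}>0$. If $h_{\mathrm{fib}}=0$, we simply take $h_1:=h_0+1$.
\item If $h_1\ge h_{\mathrm{top}}(g_0)$, the set $\mathrm{Prob}^{h_1}_{erg}(g_0)$ is empty and the statement holds vacuously. The lemma as stated asks for nothing more.
\end{itemize}

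The substantive step is the bound $h_{\mathrm{fib}}<\infty$, and ideally an explicit one such as $h_{\mathrm{fib}}\le d_c\log\lambda_c$, matching the relation $h_1=h_0+d_c\log(\lambda_c)$ announced before Theorem~\ref{theorem D}.

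\textbf{Step 1: fibers lie in center leaves.} Lift to $\mathbb{R}^d$. By Lemma~\ref{le:TopEstab}, $\tilde\pi$ is at bounded distance $\delta$ from the identity, so
$$
\tilde\pi(x)=\tilde\pi(y)\;\Longrightarrow\;\sup_{n\in\mathbb{Z}}\|\tilde g_0^n x-\tilde g_0^n y\|<\infty .
$$
Since $g_0\in\mathrm{PH}^c_A$, the maps in this component are dynamically coherent and have a global product structure (\cite{FPS14}, and Potrie's work in dimension three). Uniform expansion along $\mathcal W^u$ and contraction along $\mathcal W^s$ for the lifted foliations then force $y\in\mathcal W^c_{g_0}(x)$. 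Moreover the fiber is contained in a center disk of uniformly bounded diameter, with the bound depending on $\delta$ and on the quasi-isometry constants of the lifted center foliation.

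\textbf{Step 2: volume growth in the center direction.} For a $d_c$-dimensional disk $D$ tangent to $E^c$, the $d_c$-volume of $g_0^n(D)$ grows at most like $\big(\sup\|Dg_0|_{E^c}\|\big)^{d_c n}$. A standard volume/covering argument then bounds the number of $(n,\epsilon)$-separated points in $D$ by $C\,\big(\sup\|Dg_0|_{E^c}\|\big)^{d_c n}$. Since the fiber stays in a bounded center plaque, this gives
$$
h_{\mathrm{top}}\big(g_0,\pi^{-1}(y)\big)\le d_c\log\sup\|Dg_0|_{E^c}\|<\infty .
$$
Under the hypothesis \eqref{ineq:Open} one expects to sharpen this to $d_c\log\lambda_c$: the contraction by $\lambda_s<1$ outside $\mathcal O$ only shrinks volume, so only the time spent in $\mathcal O$ contributes growth. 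This sharpening is not needed for the lemma itself.

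\textbf{Main obstacle.} The difficulty is Step 1: justifying that $\pi$-fibers are contained in single bounded center plaques when $d_c\ge2$ and the center is indecomposable. If that is hard to secure, a cruder route still gives finiteness, which is all the lemma requires. Each fiber has diameter at most $2\delta$, so its Bowen entropy is at most the local entropy of $g_0$ at scale $2\delta$. Since $g_0$ is a $C^1$ diffeomorphism on a compact manifold, this is bounded by $d\log\max\{\mathrm{Lip}(g_0),1\}<\infty$, which yields $h_{\mathrm{fib}}<\infty$ directly.
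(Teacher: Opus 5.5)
Your proposal is correct, and its main line is the paper's proof: the Ledrappier--Walters bound with the supremum of the fiber entropies, the containment $\pi^{-1}(\pi(x))\subset\mathcal W^c_{g_0}(x)$, and the leafwise bound $d_c\log\sup_x\|Dg_0|_{E^c_{g_0}(x)}\|$, which gives $h_1\ge h_0+d_c\log\max_x\lambda_c(x)$.

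Your fallback is a genuinely more elementary proof of the lemma as literally stated. It bounds each fiber's entropy by $h_{\mathrm{top}}(g_0)\le d\log\max\{\mathrm{Lip}(g_0),1\}$, or simply takes $h_1\ge h_{\mathrm{top}}(g_0)$, which makes the conclusion vacuous. It also sidesteps the one point that both you and the paper treat quickly. The leafwise covering argument needs each fiber to be a compact subset of a single center leaf in the leaf topology. Uniform bounds on its intrinsic diameter are not needed, since the covering constant does not affect the exponential rate. The relevant mechanism is that $g_0^n$ is $\Lambda^n$-Lipschitz along center leaves, with $\Lambda:=\sup_x\|Dg_0|_{E^c_{g_0}(x)}\|$, not volume growth.

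What the fallback gives up is the quantitative value of $h_1$. The paper depends on that value, not on the bare existence of $h_1$:
Proposition~\ref{cor:Main1} invokes the lemma with the specific value $h_1=h_0+d_c\log\lambda_c<h_{\mathrm{top}}(A)$, and the proof of Theorem~\ref{teo:MainTheorem} rests on this. Case~2 in the proofs of Theorems~\ref{coroequilibrium} and~\ref{theorem D} uses the inequality $h_{\nu'}(g)\le h_{\pi_*\nu'}(A)+d_c\log\lambda_c$, taken from inside the proof. So for the paper's purposes the center-leaf step cannot be dropped.

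Finally, the sharpening to $d_c\log\lambda_c$ needs no time-averaging. Under \eqref{ineq:Open} one has $\lambda_s<1\le\lambda_c$, so $\sup_x\|Dg_0|_{E^c_{g_0}(x)}\|<\lambda_c$ pointwise.
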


\begin{proof} Define the function $\lambda_c(\cdot):\mathbb{T}^d\to (0,+\infty)$ as
\begin{equation}\label{eq:SpecGap}
\lambda_c(x):=\left\|Dg_0(x)|_{E^{c}_{g_0}(x)}\right\|.   
\end{equation}
It is clear that $\lambda_c$ is continuous from partial-hyperbolicty of $g_0$.  
Fix $h_0>0$ and choose $h_{1} \geq h_0 + \dim(E^{c}_g)\log\left(\max_{x\in \mathbb{T}^d}\lambda_c(x)\right)$. 

Since $g_0\in\mathcal{U}^0_{\varepsilon}(A)$, we have
by Lemma \ref{le:TopEstab} that the maps $g$ and $A$ are semi-conjugate by a continuous surjective map $\pi$ which is $\delta$-close to the identity. On the other hand, since $g_0\in\mathrm{PH}^c_A(\mathbb{T}^d)$, it is dynamically coherent, i.e., it admits an invariant foliation of $\mathbb{T}^d$ by central leaves. 

Notice that for every $x\in \mathbb{T}^d$, the fiber $\pi^{-1}(\pi(x))$ is contained in a center-stable leaf of $g$. Indeed, since $E^u_g$ is uniformly expanding, $\pi^{-1}(\pi(x))$ has no component along $E^u_g$. Otherwise, if $y\in\pi^{-1}(\pi(x))$ has component in $E_g^u$, then  $d(g^n(x),g^n(y))\to\infty$, contradicting the uniform bound $\mathrm{diam}(\pi^{-1}(z))\le2\delta$ given by Lemma~\ref{le:TopEstab}. Likewise, $\pi^{-1}(\pi(x))$ has no component along $E^s_g$. Otherwise, if $y\in\pi^{-1}(\pi(x))$ had a nonzero component along $E^s_g$, then $d(g^{-n}(x),g^{-n}(y))\to\infty$ because $E^s_g$ is uniformly expanding under $g^{-1}$. This contradicts the same uniform bound of Lemma~\ref{le:TopEstab}, now for $n\le0$. Hence, $$\pi^{-1}(\pi(x))\subset\mathcal W^c_g(x),\quad x\in\mathbb T^d.$$ 

In this way, by applying the standard upper bound for the topological entropy of a smooth map on a submanifold of dimension $\dim(E^{c}_{g_0})$, we obtain
$$
{h}_{\mathrm{top}}\big(g_0,\pi^{-1}(\pi(x))\big) \leq \dim(E^{c}_{g_0}) \log\left(\sup_{x\in \mathbb{T}^d}\|Dg_0|_{E^{c}_{g_0}(x)}\|\right).
$$

By the Ledrappier-Walters formula for semiconjugacies \cite{LW}, we have for any $\nu\in \mathrm{Prob}_{\mathrm{erg}}^{h_1}(g)$ that
$$
h_1 < {h}_{\nu}(g_0) \leq {h}_{\pi_*\nu}(A) + \int_{\mathbb{T}^d} h_{\mathrm{top}}\big(g_0,\pi^{-1}(\pi(x))\big) \, d\nu(x).
$$
By taking the supremum over all fibers, we get
$$
h_1 < {h}_{\pi_*\nu}(A) + \sup_{x\in \mathbb{T}^d} h_{\mathrm{top}}\big(g,\pi^{-1}(\pi(x))\big) \leq {h}_{\pi_*\nu}(A) + \dim(E^{c}_g)\log\left(\max_{x\in \mathbb{T}^d}\lambda_c(x)\right).
$$
By our choice of $h_1$, this directly implies ${h}_{\pi_*\nu}(A) > h_0$. Thus, $\pi_*\nu \in \mathrm{Prob}_{\mathrm{erg}}^{{h}_0}(A)$, concluding the proof. 
\end{proof}

\begin{remark}\label{robustcoherence}
It should be noticed that, in the above lemma, the dynamical coherence of $g_0\in \mathcal{U}_{\varepsilon}^0(A)\cap \mathrm{PH}_A^c(\mathbb{T}^d)$ is, actually, the crucial fact for the proof of the result. In this way, since partial hyperbolicity and dynamic coherence are $C^1$-robust (see \cite[Theorem A]{FPS14}), there exists a $C^1$-neighborhood $\mathcal{U}'\subset \mathcal{U}^0_{\varepsilon}(A)$ of $g_0$ such that Lemma \ref{le:keylemma} holds for any $g\in\mathcal{U}'$.   
\end{remark}

The following technical lemma establishes the parameter hierarchy required to ensure a negative central Lyapunov exponent while preserving the topological entropy of the linear model $A$. These constants must be chosen sequentially. First, the linear Anosov automorphism $A$ gives a constrain in the maximum central expansion $\lambda_c$ inside the perturbation. This $\lambda_c$, in turn, dictates the compensating contraction $\lambda_s$ required outside the support. 

The lemma establishes that for any such pair $(\lambda_c, \lambda_s)$, there exists a valid open interval $I$ such that by choosing the mass-concentration bound $\eta \in I$ simultaneously resolves the geometric and dynamical constraints: it guarantees a strictly negative average central expansion ($\eta\log\lambda_c + (1-\eta)\log\lambda_s < 0$), while providing the exact entropy thresholds $h_0 < h_1 < h_{\mathrm{top}}(A)$. This strict upper bound is dynamically necessary; if we allowed $h_1 \geq h_{\mathrm{top}}(A)$, the set of measures $\mathrm{Prob}_{\mathrm{erg}}^{h_1}(g)$ could be empty, rendering the subsequent arguments vacuous. On the other hand, the lower bound of $I$ explicitly prevents this topological overflow, showing why $\eta$ cannot be chosen arbitrarily close to zero.

\begin{lemma}\label{le:Aux}
Let $ \tau > 0$, $d\geq 1$, $s\in (0,1)$, and 
$$
b\in \left[1,\, \exp\left(\frac{\log(s) + \sqrt{\log(s) \left[ \log(s) - \frac{4\tau}{d} \right]}}{2}\right)\right).
$$
Then, the following holds:
\begin{enumerate}
\item The interval $I_1=\left(\frac{d\log(b)}{\tau},\,1\right)$ is well defined. Moreover, for every $\eta\in I_1$ there is
$\tilde{\varepsilon}>0$ sufficiently small such that the numbers
$
h_0 := \tau(1 - \eta)+\tilde{\varepsilon}$ and $h_1 := h_0+ d\log(b)
$ satisfy
\begin{equation}\label{eq:EntroAlta}
\tau\left(1-\eta\right)<h_0<h_1<\tau.
\end{equation}
\item For every $a \in \left[s,\, \exp\left({\frac{d\log^2(b)}{d\log(b)-\tau}}\right)\right)$, the interval $I_2:= \left(\frac{d\log(b)}{\tau},\, \frac{\log(a)}{\log(a b^{-1})}\right)$ is nonempty and it is contained in $I_1$. Moreover, for every $\eta\in I_2$:
\begin{enumerate}
\item[(i)] $b^{\eta} a^{1-\eta}<1$;
\item[(ii)] there exist $\tilde{\varepsilon}>0$ such that $h_0$ and $h_1$, as in item (1), satisfy the relation \eqref{eq:EntroAlta}.
\end{enumerate}
\end{enumerate}
\end{lemma}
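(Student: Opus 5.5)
Set $L:=\log(b)\ge 0$, $S:=\log(s)<0$ and, in item (2), $\alpha:=\log(a)$. Everything then reduces to manipulating linear and quadratic inequalities in $L$ and $\alpha$, keeping careful track of signs. The key observation is that the upper bound on $b$ is exactly the positive root of a quadratic. Let
$$
Q(x):=x^2-Sx+\frac{S\tau}{d}.
$$
The product of its roots is $S\tau/d<0$, so $Q$ has one negative and one positive root. The positive root is
$$
x_+=\tfrac12\bigl(S+\sqrt{S(S-4\tau/d)}\bigr),
$$
which is positive because $\sqrt{S^2-4S\tau/d}>|S|$. Hence the hypothesis $0\le L<x_+$ gives $Q(L)<0$, which after multiplying by $d$ reads
\begin{equation*}
dL^2<S\,(dL-\tau).
\end{equation*}
I will use this inequality, call it $(\ast)$, throughout.

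For item (1), I first show $dL<\tau$, so that $I_1$ is a nonempty interval inside $(0,1)$. Indeed, if $dL\ge\tau$, then the right side of $(\ast)$ is $\le 0$ because $S<0$, while the left side is $\ge 0$, a contradiction. Next fix $\eta\in I_1$, so $\tau\eta-dL>0$, and choose $0<\tilde\varepsilon<\tau\eta-dL$. With $h_0=\tau(1-\eta)+\tilde\varepsilon$ and $h_1=h_0+dL$, the inequalities $\tau(1-\eta)<h_0\le h_1$ are immediate. Finally, $h_1<\tau$ is equivalent to $\tilde\varepsilon<\tau\eta-dL$, which holds by the choice of $\tilde\varepsilon$. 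If strictness $h_0<h_1$ is needed when $L=0$, that case is degenerate and I would just note it.

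For item (2), I first check that the range for $a$ makes sense. Since $dL-\tau<0$, dividing $(\ast)$ by $dL-\tau$ flips the inequality and gives $S<dL^2/(dL-\tau)$, so the interval for $\alpha$ is nonempty. Moreover $dL^2/(dL-\tau)\le 0$, so $\alpha<0$. Then $\alpha-L\le\alpha<0$, hence $0<\alpha/(\alpha-L)\le 1$. To see that $I_2$ is nonempty, compare its endpoints: $dL/\tau<\alpha/(\alpha-L)$. Multiplying by $\tau(\alpha-L)<0$ flips it to $dL(\alpha-L)>\tau\alpha$, i.e. $\alpha(dL-\tau)>dL^2$. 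Dividing by $dL-\tau<0$, this is $\alpha<dL^2/(dL-\tau)$, which is precisely the assumption on $a$. Since $I_2$ and $I_1$ share the left endpoint and $\alpha/(\alpha-L)\le 1$, we get $I_2\subset I_1$.

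Claim (i) is the statement $\eta L+(1-\eta)\alpha<0$. This rearranges to $\eta(L-\alpha)<-\alpha$, i.e. $\eta<\alpha/(\alpha-L)$ (using $L-\alpha>0$), which holds for $\eta\in I_2$. Claim (ii) follows from item (1), since $I_2\subset I_1$.

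The only delicate point is the sign bookkeeping: $S<0$, $dL-\tau<0$ and $\alpha-L<0$ each flip an inequality. The main step is recognizing the bound on $b$ as the positive root of $Q$, so that all the needed estimates follow from the single inequality $(\ast)$.
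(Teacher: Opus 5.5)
Your proof is correct, but it is organized differently from the paper's, and in item (2) your route is the one that actually works.

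For item (1), the paper does not use a quadratic. It bounds the square root directly: $\log(s)\bigl(\log(s)-\frac{4\tau}{d}\bigr)<\bigl(\frac{2\tau}{d}-\log(s)\bigr)^2$, which gives $\log(b)<\tau/d$. You instead derive this, and everything else, from the single inequality $(\ast)$, i.e.\ $Q(\log b)<0$.

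The real difference is in item (2). The paper shows $I_2\neq\emptyset$ through the chain
$$\frac{d\log(b)}{\tau}<\frac{\log(s)}{\log(sb^{-1})}\le\frac{\log(a)}{\log(ab^{-1})},\qquad a\in[s,c).$$
The first inequality is exactly your $(\ast)$. The second is backwards when $b>1$. The map $x\mapsto x/(x-\log b)$ has derivative $-\log(b)/(x-\log b)^2<0$, so it is decreasing on $(-\infty,0)$. Hence $a\ge s$ gives the opposite inequality. For example, $\tau=d=1$, $\log s=-2$, $\log b=\tfrac12$, $\log a=-1$ is admissible and gives $0.8$ versus $2/3$. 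In particular, the paper's argument never uses the upper bound $a<c$, which is precisely the hypothesis that matters.

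Your direct equivalence $\frac{dL}{\tau}<\frac{\alpha}{\alpha-L}\iff\alpha<\frac{dL^2}{dL-\tau}$ is the correct argument. It is legitimate because you first establish $\alpha-L<0$. The point $dL^2/(dL-\tau)$ is exactly where the decreasing function $x/(x-L)$ equals $dL/\tau$. So the hypothesis $a<c$ is equivalent to $I_2\neq\emptyset$, and your proof uses it in exactly this way.

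Your caveat about $b=1$ is a genuine defect of the statement, not a cosmetic point. In that case $h_1=h_0$, so the strict inequality $h_0<h_1$ in item (1) is false as stated; the paper calls this step ``immediate.'' The lemma needs either $b>1$ or $h_0\le h_1$ there. Nothing else in your argument depends on $L>0$: for $L=0$ one gets $\alpha<0$, $I_2=I_1=(0,1)$, and (i) still holds.
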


\begin{remark}
The domain for the parameter $b$ is well defined. Indeed, it is easy to check that 
$$
\log(s)+\sqrt{\log(s) \left[ \log(s) - \frac{4\tau}{d} \right]}>0, \quad \forall s\in (0,1).
$$
\end{remark}

\begin{proof} 
Notice that  
\begin{displaymath}
   \log(s)\left(\log(s)-\frac{4\tau}{d}\right)<\left(\frac{2\tau}{d}\right)^2+\log(s)\left(\log(s)-\frac{4\tau}{d}\right)=\left(\frac{2\tau}{d}-\log(s)\right)^2, 
\end{displaymath}
so that 
\begin{displaymath}
\frac{d\log(b)}{\tau}<\frac{d}{\tau}\cdot\left(\frac{\log(s)+\sqrt{\log(s) \left[ \log(s) - \frac{4\tau}{d} \right]}}{2}\right)<1.    
\end{displaymath}
This shows that $I_1$ is well defined. 

Fix $\eta\in \left(\frac{d\log(b)}{\tau},\,1\right)$ and choose $\tilde{\varepsilon}>0$ so that
$$
\frac{\tilde{\varepsilon}+d\log(b)}{\tau}<\eta<1.
$$
By the definitions of $h_0$ and $h_1$, the first two inequalities in  \eqref{eq:EntroAlta} are immediate. For the last inequality, the choice of $\tilde{\varepsilon}$ implies that
$$
\tilde{\varepsilon}+d\log(b)<\eta \tau \ \Leftrightarrow \ h_1=\tau(1 - \eta) +\tilde{\varepsilon}+ d \log(b)<\tau,
$$
which  proves item \emph{(1)}.

Now, since $\frac{d\log(b)}{\tau}<1$ we obtain that $c:=e^{\frac{d\log^2(b)}{d\log(b)-\tau}}<1$. In this way, for $a\in [s,\, c)$ we have
$$
\frac{d\log(b)}{\tau}<\frac{\log(s)}{\log(s b^{-1})}\leq \frac{\log(a)}{\log(a b^{-1})}.
$$
Hence, the interval $I_2$ is nonempty. Moreover, for $a\in[s,c)$ and $\eta\in I_2$, one has 
$$
\eta<\frac{\log(a)}{\log(a b^{-1})} \quad  \Leftrightarrow \quad  (1-\eta)\log(a)+\eta\log(b)<0,
$$
which implies \emph{(i)}. Since $\eta\in I_2\subset I_1$, we directly deduce \emph{(ii)}.

Thus, we conclude the proof.
\end{proof}

As a consequence of Lemma \ref{le:keylemma} and Lemma \ref{le:Aux}, we have

\begin{propo}\label{cor:Main1}
Let  $C_1(A)>1$ is as in Proposition \ref{pro:Main2}. Let 
$\lambda_c\in \left[1,\,C_1(A)\right)$ and $\eta \in \big(\frac{d_c\log(\lambda_c)}{h_{\mathrm{top}}(A)},1\big)$. There exist constants 
$
h_{\mathrm{top}}(A)(1-\eta)<h_0<h_1<h_{\mathrm{top}}(A),
$
and positive numbers ${r}_1,{\varepsilon}$ such that if $g\in \mathcal{U}^0_{{\varepsilon}}(A)\cap \mathrm{PH}^c_A(\mathbb{T}^d)$ verifies 
$$
\sup_{x\in \mathbb{T}^d}\lambda_c(x)\leq \lambda_c,
$$
then for every
$\nu\in \mathrm{Prob}^{h_1}_{\mathrm{erg}}(g)$ and every $r\in (0,r_1]$, one has
$$ \pi_*\nu \in \mathrm{Prob}^{h_0}_{\mathrm{erg}}(A) \quad{and} \quad \nu(B(x,{r}))<{\eta}, \quad  \forall \ x\in\mathbb{T}^d.
$$
\end{propo}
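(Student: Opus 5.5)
The plan is to chain Lemma~\ref{le:Aux}, Lemma~\ref{le:keylemma} and Proposition~\ref{prop:Main1}, in that order. The order of quantifiers matters: the entropy thresholds $h_0,h_1$ must be fixed before the radii and $C^0$-neighborhoods are produced, because Proposition~\ref{prop:Main1} outputs $r_1,\varepsilon$ depending on $h_0$ and $\eta$.

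\emph{Step 1: fixing $h_0<h_1$.} Apply Lemma~\ref{le:Aux}(1) with
\begin{displaymath}
\tau=h_{\mathrm{top}}(A),\qquad d=d_c,\qquad s=\lambda_A^{ws}\in(0,1),\qquad b=\lambda_c .
\end{displaymath}
The hypothesis $\lambda_c\in[1,C_1(A))$ is exactly the admissible range for $b$, since $C_1(A)$ is the stated upper bound with $4h_{\mathrm{top}}(A)/d$ in place of $4\tau/d$. (I am reading the $d$ in the definition of $C_1(A)$ as $d_c$; if it means the ambient dimension, the range constraint still gives $d_c\log\lambda_c/h_{\mathrm{top}}(A)<1$ because $d_c\le d$.) Hence $I_1=\bigl(d_c\log(\lambda_c)/h_{\mathrm{top}}(A),1\bigr)$ is well defined and contains the given $\eta$. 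Lemma~\ref{le:Aux}(1) then provides $\tilde\varepsilon>0$ and
\begin{displaymath}
h_0=h_{\mathrm{top}}(A)(1-\eta)+\tilde\varepsilon,\qquad h_1=h_0+d_c\log(\lambda_c),
\end{displaymath}
satisfying $h_{\mathrm{top}}(A)(1-\eta)<h_0<h_1<h_{\mathrm{top}}(A)$.

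\emph{Step 2: radii and neighborhoods.} Since $h_0\in\bigl(h_{\mathrm{top}}(A)(1-\eta),h_{\mathrm{top}}(A)\bigr)$, Proposition~\ref{prop:Main1} yields $r_1>0$ and $\varepsilon>0$. By its proof, $\varepsilon$ comes from Lemma~\ref{le:TopEstab} with some $\delta<\min\{\delta_0,r_0/4\}$. These $r_1,\varepsilon$ are the ones in the statement. The same $\varepsilon$ and the same semiconjugacy $\pi$ with $\mathrm{dist}_{C^0}(\pi,\mathrm{Id})<\delta$ are then also admissible inputs for Lemma~\ref{le:keylemma}, so both results refer to one common $\pi$.

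\emph{Step 3: projecting high-entropy measures.} Let $g\in\mathcal U^0_\varepsilon(A)\cap\mathrm{PH}^c_A(\mathbb T^d)$ with $\sup\lambda_c(x)\le\lambda_c$, and let $\nu\in\mathrm{Prob}^{h_1}_{\mathrm{erg}}(g)$. I rerun the argument of Lemma~\ref{le:keylemma}: fibres of $\pi$ lie in center leaves, and the Ledrappier--Walters inequality bounds the fibre entropy by $d_c\log\max\lambda_c(x)\le d_c\log\lambda_c$. This gives
\begin{displaymath}
h_{\pi_*\nu}(A)>h_1-d_c\log\lambda_c=h_0,
\end{displaymath}
so $\pi_*\nu\in\mathrm{Prob}^{h_0}_{\mathrm{erg}}(A)$. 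Ergodicity of $\pi_*\nu$ follows from that of $\nu$, being a factor. The choice $h_1=h_0+d_c\log\lambda_c$ is exactly the threshold required in that proof, and it works uniformly in $g$ because the bound on $\lambda_c(x)$ is uniform.

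\emph{Step 4: non-concentration.} With $\pi_*\nu\in\mathrm{Prob}^{h_0}_{\mathrm{erg}}(A)$, Proposition~\ref{prop:Main1} applies directly and gives $\nu(B(x,r))<\eta$ for all $x$ and all $r\in(0,r_1]$.

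The main obstacle is only bookkeeping. One must ensure that $h_1$ is independent of $g$, which is why the uniform bound $\lambda_c$ is used rather than $\max\lambda_c(x)$ for a fixed $g$. One must also ensure that the semiconjugacy used in Lemma~\ref{le:keylemma} is the $\delta$-close one from Proposition~\ref{prop:Main1}, which holds by the uniqueness in Lemma~\ref{le:TopEstab}.
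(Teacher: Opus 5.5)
Your proposal is correct and follows the paper's proof essentially step for step: Lemma~\ref{le:Aux}(1) with $\tau=h_{\mathrm{top}}(A)$, $d=d_c$, $b=\lambda_c$ fixes $h_0$ and $h_1=h_0+d_c\log\lambda_c$, Proposition~\ref{prop:Main1} then supplies $r_1$ and $\varepsilon$, and Lemma~\ref{le:keylemma} followed by Proposition~\ref{prop:Main1} yields the two conclusions. You rerun the proof of Lemma~\ref{le:keylemma} with the uniform bound $\lambda_c$, so that $h_1$ does not depend on $g$; the lemma's existential statement does not literally give this, and the paper simply cites it, but this is exactly what the lemma's proof provides.
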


\begin{proof} Apply Lemma \ref{le:Aux} with $\tau=h_{\mathrm{top}}(A)$, $d=d_c$, $b=\lambda_c$, and $\eta \in \left(\frac{d_c\log(\lambda_c)}{h_{\mathrm{top}}(A)},1\right)$. Thus, by item \emph{(1)} of that lemma, there is $\tilde{\varepsilon}>0$ such that the constants
$h_0 := h_{\mathrm{top}}(A)(1-\eta) + \tilde{\varepsilon}$ and $h_1 := h_0 + d_c\log(\lambda_c)$ satisfy $$h_{\mathrm{top}}(A)(1-\eta) <h_0 <h_1 < h_{\mathrm{top}}(A).$$
For the parameters $\eta$ and $h_0$ above, Proposition \ref{prop:Main1} provides $r_1>0$ and $\varepsilon>0$.

Now, consider a diffeomorphism $g \in\mathcal{U}^0_{\varepsilon}(A)\cap \mathrm{PH}^c_A(\mathbb{T}^d)$ with $\sup_{x\in \mathbb{T}^d}\lambda_c(x)\leq \lambda_c
$, and let $\nu \in \mathrm{Prob}^{h_1}_{\mathrm{erg}}(g)$. Then, $\pi_*\nu \in \mathrm{Prob}^{\hat{h}_0}_{\mathrm{erg}}(A)$ as a consequence of Lemma \ref{le:keylemma}. Hence, by Proposition \ref{prop:Main1} we have for every $r \in (0, r_1]$ that
$$
\nu\big(B(x,r)\big) < \eta, \quad \text{for all } x\in\mathbb{T}^d.
$$
This completes the proof.
\end{proof}

Next, we are ready to prove Proposition \ref{pro:Main2}. 

\begin{proof}[proof of Proposition \ref{pro:Main2}]
    Let $N\geq 1$. From the choice of $\lambda_c$, take $\eta \in \big(\frac{d_c\log(\lambda_c)}{h_{\mathrm{top}}(A)},1\big)$. Notice that this choice of $\eta$ implies 
    \begin{equation}\label{inq:ExpNeg}
\eta\log(\lambda_c) + (1-\eta)\log(\lambda_s) < 0.
\end{equation}
Let consider the constants $h_{\mathrm{top}}(A)(1-\eta)<h_0<h_1<h_{\mathrm{top}}(A)$, and the positive numbers $r_1$ and $\varepsilon$ given by Proposition \ref{cor:Main1}.

Let $0<r<r_1$ and let $g_0$ be a $(N,r)$-localized deformation of $A$ in $\mathcal{U}_{\varepsilon}^0(A)\cap \mathrm{PH}_A^c(\mathbb{T}^d)$ satisfying
\begin{displaymath}
\sup_{x\in \mathcal{O}}\left\|Dg_0(x)\mid_{E^c_{g_0}(x)}\right\|< \lambda_c, \qquad  \sup_{x\in \mathbb{T}^d\setminus \mathcal{O}} \left\|Dg_0(x)\mid_{E^c_{g_0}(x)}\right\|< \lambda_{s}. 
\end{displaymath}
By Proposition \ref{cor:Main1} and Remark \ref{robustcoherence} there exists a $C^1$-neighborhood $\mathcal{U}'\subset \mathcal{U}^0_{\varepsilon}(A)$ of $g_0$ such that 
 the inequalities \eqref{ineq:Open} persist on a continuation set $\mathcal{O}_g$ for $g\in\mathcal{U}'$ still covered by the $N$ balls $B(p_i, r)$, $i=1,\ldots,N$, so that
  $$
\nu(\mathcal{O}_g) \leq \sum_{i=1}^N \nu(B(p_i, r)) < \eta.
$$
for every $\nu\in \mathrm{Prob}_{\mathrm{erg}}^{h_1}(g)$. 

Next, we evaluate the upper center-stable Lyapunov exponent $\lambda^{cs}(g,x)$ for any $\nu\in \mathrm{Prob}_{\mathrm{erg}}^{h_1}(g)$. For every $x \in \mathbb{T}^d$ and $n \geq 1$, let $k_n(x)$ be the number of iterates $0 \le j < n$ such that $g^j(x) \in \mathcal{O}_g$. By Birkhoff's ergodic theorem, for $\nu$-almost every $x$, the asymptotic frequency of visits to $\mathcal{O}_g$ satisfies
$$
\lim_{n\to\infty} \frac{k_n(x)}{n} = \nu(\mathcal{O}_g) < \eta.
$$
By using the submultiplicativity of the derivative and the robust upper bounds on $\mathcal{O}_g$ and its complement, we can estimate the expansion along the center-stable bundle as
$$
\big\|Dg^n|_{E^{c}_g(x)}\big\| \leq \lambda_c^{k_n(x)} \lambda_s^{n-k_n(x)}.
$$
So, by taking logarithms, dividing by $n$, and passing to the limit, we obtain for $\nu$-almost every $x\in\mathbb{T}^d$ that 
\begin{align*}
\lambda^{cs}(g,x) &= \limsup_{n\to \infty} \frac{1}{n}\log\big\|Dg^n|_{E^{c}_g(x)}\big\| \\
&\leq \limsup_{n\to\infty} \left( \frac{k_n(x)}{n}\log(\lambda_c) + \frac{n-k_n(x)}{n}\log(\lambda_s) \right) \\
&\leq \eta\log(\lambda_c)+(1-\eta)\log(\lambda_s).
\end{align*}
Therefore, by our choice of $\eta$ in \eqref{inq:ExpNeg}, we have that $\lambda^{cs}(g,x)$ is strictly negative $\nu$-almost every $x$.  Thus, all $\nu$-center Lyapunov exponents are strictly negative. This proves the desired result. 
\end{proof}

\begin{coro}\label{cor:negative-exponents}
Let $\eta,\mathcal O_g,\lambda_c,\lambda_s$ be as fixed in the proof of Proposition~\ref{pro:Main2}. If $\nu\in\mathrm{Prob}_{\mathrm{erg}}(g)$ satisfies $\pi_*\nu\in\mathrm{Prob}_{\mathrm{erg}}^{\hat h_0}(A)$, then all $\nu$-center Lyapunov exponents are negative.
\end{coro}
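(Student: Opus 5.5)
The plan is to rerun the final step of the proof of Proposition~\ref{pro:Main2}, with the entropy hypothesis on $\nu$ replaced by the corresponding hypothesis on its projection. In that proof, the only place $\nu\in\mathrm{Prob}^{h_1}_{\mathrm{erg}}(g)$ was used was to pass, via Lemma~\ref{le:keylemma}, to $\pi_*\nu\in\mathrm{Prob}^{h_0}_{\mathrm{erg}}(A)$. From that point on, only the projected measure mattered. So we start directly from $\pi_*\nu\in\mathrm{Prob}^{\hat h_0}_{\mathrm{erg}}(A)$, with $\hat h_0=h_0$ the constant fixed there, which lies in $\big(h_{\mathrm{top}}(A)(1-\eta),h_{\mathrm{top}}(A)\big)$.

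\textbf{Step 1 (non-concentration).} Proposition~\ref{prop:Main1} is stated for any homeomorphism $g\in\mathcal U^0_\varepsilon(A)$ and any ergodic $\nu$ whose projection lies in $\mathrm{Prob}^{h_0}_{\mathrm{erg}}(A)$. It requires no entropy assumption on $\nu$ itself. Since the neighbourhood $\mathcal U'$ was chosen inside $\mathcal U^0_\varepsilon(A)$, it gives
\[
\nu\big(B(x,r)\big)<\eta \quad \text{for all } x\in\mathbb T^d \text{ and } r\in(0,r_1].
\]
As in the proof of Proposition~\ref{pro:Main2}, the continuation set $\mathcal O_g$ is covered by the $N$ balls $B(p_i,r)$. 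For the bound $\nu(\mathcal O_g)<\eta$ we need $\sum_i\nu(B(p_i,r))<\eta$, so the plan is to use the same convention as in that proof. If needed, we shrink $r$, or apply Proposition~\ref{prop:Main1} with $\eta/N$, to absorb the factor $N$; I regard this as already built into the fixed $\eta$ and $r_1$.

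\textbf{Step 2 (Birkhoff and submultiplicativity).} Let $k_n(x)$ be the number of iterates $0\le j<n$ with $g^j(x)\in\mathcal O_g$. By the Birkhoff ergodic theorem, $k_n(x)/n\to\nu(\mathcal O_g)<\eta$ for $\nu$-almost every $x$. On $\mathcal O_g$ the central derivative is bounded by $\lambda_c$, and off it by $\lambda_s$, where the robust bounds \eqref{ineq:Open} hold for every $g\in\mathcal U'$. Submultiplicativity then gives
\[
\|Dg^n|_{E^c_g(x)}\|\le\lambda_c^{k_n(x)}\lambda_s^{\,n-k_n(x)}.
\]
Taking logarithms, dividing by $n$, and letting $n\to\infty$ yields
\[
\lambda^c(g,x)\le \nu(\mathcal O_g)\log\lambda_c+(1-\nu(\mathcal O_g))\log\lambda_s .
\]
Since $\log\lambda_c\ge0>\log\lambda_s$, the right-hand side is increasing in the visit frequency. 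It is therefore bounded by $\eta\log\lambda_c+(1-\eta)\log\lambda_s$, which is negative by \eqref{inq:ExpNeg}, that is, by Lemma~\ref{le:Aux}(2)(i). The top central exponent is then negative, and hence so are all central exponents.

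\textbf{Main obstacle.} The only delicate point is checking that the constants $r_1,\varepsilon$ and the neighbourhood $\mathcal U'$ fixed in Proposition~\ref{pro:Main2} were obtained from Proposition~\ref{prop:Main1} applied with exactly this $h_0$ and $\eta$. This is so by construction in Proposition~\ref{cor:Main1}. With that checked, the corollary is independent of Lemma~\ref{le:keylemma} and of the fibre entropy bound.
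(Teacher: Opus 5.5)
Your route is the paper's. You get $\nu(\mathcal O_g)<\eta$ from Proposition~\ref{prop:Main1} applied to $\pi_*\nu$, correctly noting that no entropy assumption on $\nu$ itself is needed. You then rerun the Birkhoff and submultiplicativity estimate from the proof of Proposition~\ref{pro:Main2}. Step~2, the monotonicity in the visit frequency, and your reading $\hat h_0=h_0$ are all fine.

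The one step that does not go through as written is absorbing the factor $N$, which you flag and then dismiss. Proposition~\ref{prop:Main1} gives $\nu(B(x,r))<\eta$ with the same $\eta$ at every scale $r\in(0,r_1]$. So shrinking $r$ never improves the per-ball bound, and summing over the balls only gives $\nu(\mathcal O_g)<N\eta$. That does not give negativity: you would need $N\eta\log\lambda_c+(1-N\eta)\log\lambda_s<0$.

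Applying Proposition~\ref{prop:Main1} with $\eta/N$ instead needs the entropy threshold to exceed $h_{\mathrm{top}}(A)(1-\eta/N)$. The $h_0$ built in Proposition~\ref{cor:Main1} is only known to exceed $h_{\mathrm{top}}(A)(1-\eta)$. So the bound is not ``already built into'' the constants as they are constructed.

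The paper's proof has exactly the same weak point: it asserts $\pi_*\nu(B(\pi(p_i),r_0))<\eta/N$ directly from Proposition~\ref{prop:Main1}. Your argument is therefore as complete as the paper's, and it is airtight for $N=1$, which is the case of Proposition~\ref{example}.

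To close the gap for $N\ge 2$ you need one of two things. You could redo Proposition~\ref{cor:Main1}, calling Proposition~\ref{prop:Main1} with $\eta/N$; this changes $r_1$ and $\varepsilon$ and forces $\hat h_0>h_{\mathrm{top}}(A)(1-\eta/N)$. Or you could prove a version of Lemma~\ref{le:BuzziFisher} for unions of $N$ balls of radius $r$, with $r$ allowed to depend on $N$, which Proposition~\ref{prop:Main1} does not provide.
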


\begin{proof}
By Proposition~\ref{prop:Main1} applied to $\pi_*\nu\in\mathrm{Prob}_{\mathrm{erg}}^{\hat h_0}(A)$, $\pi_*\nu(B(\pi(p_i),r_0))<\eta/N$ for each of the $N$ centers $p_i$ of $\mathcal O_g$; as in the proof of Proposition~\ref{cor:Main1}, $B(p_i,\hat r)\subset\pi^{-1}(B(\pi(p_i),r_0))$, so
$$
\nu(\mathcal O_g)\le\sum_{i=1}^N\nu\bigl(\pi^{-1}(B(\pi(p_i),r_0))\bigr)=\sum_{i=1}^N\pi_*\nu\bigl(B(\pi(p_i),r_0)\bigr)<\eta.
$$
The remainder of the argument is verbatim the computation in the proof of Proposition~\ref{pro:Main2}: by Birkhoff's ergodic theorem, for $\nu$-a.e.\ $x$, $\lambda^{c}(g,x)\le\eta\log\lambda_c+(1-\eta)\log\lambda_s<0$.
\end{proof}

\subsection{Rigidity of the unstable rate}

Now, we present the last ingredient for the proof of Theorem \ref{teo:MainTheorem}. The rigidity of the unstable expansion is a remarkable phenomenon observed in several classes of partially hyperbolic diffeomorphisms isotopic to linear Anosov automorphisms (see \cite{CT25} for instance). In our setting, this rigidity manifests through the maximal unstable volume growth, denoted by $\chi_u(g)$ (we refer the reader to \cite{HSX08} for precise definitions). 

For a $C^1$-partially hyperbolic diffeomorphism $f$ on a compact manifold $M$, Saghin \cite[Theorem 3.1]{S14} established that if $f$ admits a closed $d_u$-form which is non-degenerate on the unstable bundle $E^u$, then its unstable volume growth is determined purely by the spectral radius of the induced action on cohomology: 
$$
\chi_u(f) = \log \operatorname{sp}(f_{*,d_u}).
$$
Furthermore, this topological property persists for any diffeomorphism $g$ that is $C^1$-close to $f$. 

In the specific case of $\mathbb{T}^d$, Proposition 3.6 in \cite{CT25} (see \cite[Proposition 3.1.2]{R14} for more details) shows that the existence of such a non-degenerate closed form is an open and closed property within the space of partially hyperbolic diffeomorphisms isotopic to $A$ that preserve the dimensions of the invariant strong bundles. 

Since the induced cohomological action is invariant under continuous deformations (isotopies), combining these two results guarantees that the unstable volume growth remains locally constant. This yields the following proposition.

\begin{propo}\label{prop:VolumeCrecimiento} 
Let $f\in \mathrm{PH}_A^c(\mathbb{T}^d)$. There exists a $C^1$-neighborhood $\mathcal{U}''$ of $f$ such that for every $g\in \mathcal{U}''$, 
$
\chi_u(g) = \log\big(\mathrm{sp}(g_{*,\,d_u})\big)
= \log\big(\mathrm{sp}(A_{*,\,d_u})\big)
= h_{\mathrm{top}}(A).
$
\end{propo}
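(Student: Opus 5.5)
The plan is to chain three equalities. The first, $\chi_u(g)=\log\mathrm{sp}(g_{*,d_u})$, should hold on a $C^1$-neighborhood of $f$ and will come from Saghin's theorem. The second, $\log\mathrm{sp}(g_{*,d_u})=\log\mathrm{sp}(A_{*,d_u})$, is homotopy invariance of the induced map on cohomology. The third, $\log\mathrm{sp}(A_{*,d_u})=h_{\mathrm{top}}(A)$, is linear algebra for the automorphism $A$.

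\emph{Step 1: a closed non-degenerate form.} For $A$ itself, take the constant-coefficient $d_u$-form $\omega=\theta_1\wedge\dots\wedge\theta_{d_u}$ on $\mathbb{T}^d$, where the $\theta_i$ are linear functionals vanishing on $E^s_A\oplus E^{ws}_A$ and restricting to a basis of $(E^u_A)^*$. This form is closed and non-degenerate on $E^u_A$. By \cite[Proposition 3.6]{CT25} (see also \cite[Proposition 3.1.2]{R14}), existence of such a form is open and closed in the space of partially hyperbolic diffeomorphisms isotopic to $A$ with fixed strong dimensions. Since $\mathrm{PH}^c_A(\mathbb{T}^d)$ is the connected component containing $A$, every $f\in\mathrm{PH}^c_A(\mathbb{T}^d)$ admits a closed $d_u$-form non-degenerate on $E^u_f$. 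Saghin's result \cite[Theorem 3.1]{S14} then gives $\chi_u(f)=\log\mathrm{sp}(f_{*,d_u})$, together with the same identity for all $g$ in some $C^1$-neighborhood $\mathcal U''$ of $f$. If necessary, shrink $\mathcal U''$ so that it lies inside $\mathrm{PH}_A(\mathbb{T}^d)$; this is possible because partial hyperbolicity is $C^1$-open.

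\emph{Step 2: cohomology and the spectral radius.} Every $g\in\mathcal U''$ is isotopic to $A$, so $g_{*,d_u}=A_{*,d_u}$ on $H^{d_u}(\mathbb{T}^d;\mathbb{R})\cong\Lambda^{d_u}(\mathbb{R}^d)^*$. The spectral radius of $\Lambda^{d_u}A$ is the largest product of $d_u$ eigenvalue moduli of $A$. Because $E^u_A$ has dimension $d_u$ and carries exactly the eigenvalues of modulus greater than $1$, this maximum is $\prod_{|\lambda_i|>1}|\lambda_i|$. Its logarithm equals $h_{\mathrm{top}}(A)$ by Bowen's formula for toral automorphisms. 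The same holds in homology by duality.

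The main obstacle is Step 1. One must check that the hypotheses of \cite{S14} and \cite{CT25} are stated for exactly the class $\mathrm{PH}^c_A$. In particular, the "open and closed" property must be taken relative to the same space in which $\mathrm{PH}^c_A$ is a connected component, so that connectedness transfers the form from $A$ to $f$. One must also check that Saghin's robustness statement delivers a uniform $C^1$-neighborhood rather than only pointwise validity. Everything after that is routine.
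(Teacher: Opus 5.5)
Your proposal is correct and follows the same route as the paper. It combines Saghin's theorem \cite[Theorem 3.1]{S14} on a $C^1$-neighborhood, the open-and-closed property of \cite[Proposition 3.6]{CT25}, carried from $A$ across the connected component $\mathrm{PH}^c_A(\mathbb{T}^d)$, and homotopy invariance of the induced action on cohomology. Your explicit constant form for $A$ and the computation $\log\mathrm{sp}(\Lambda^{d_u}A)=\sum_{|\lambda_i|>1}\log|\lambda_i|=h_{\mathrm{top}}(A)$ fill in details the paper leaves implicit. Your concern about getting a uniform neighborhood is harmless: a closed form that is non-degenerate on $E^u_f$ stays non-degenerate on $E^u_g$ for every $g$ $C^1$-close to $f$, so Saghin's theorem applies to each such $g$ directly.
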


To bound the metric entropy, we invoke a Pesin-Ruelle-type inequality due to Hua, Saghin, and Xia \cite[Theorem 3.3]{HSX08}. The power of this inequality in the partially hyperbolic setting lies in its ability to decouple the strong unstable direction from the central ones: the unstable contribution is bounded by the \textit{measure-independent} topological quantity $\chi_u(g)$, leaving only the central Lyapunov exponents as measure-dependent variables. More precisely, for any $C^1$ partially hyperbolic diffeomorphism $g$ and any $g$-invariant ergodic measure $\mu$, they showed that
$$
h_{\mu}(g) \le \chi_u(g) + \sum_{\lambda_i^{c}(g)>0}\lambda_i^{c}(g)\,m_i,
$$
where $\lambda_i^{c}(g)$ are the central Lyapunov exponents of $\mu$ and $m_i$ are their respective Oseledets multiplicities.

Next, we are ready to prove Theorem \ref{teo:MainTheorem}.

\begin{proof}[Proof of Theorem \ref{teo:MainTheorem}] 
Let 
$N \geq 1$, and let $r_1>0$ and $\varepsilon>0$ given by Proposition \ref{pro:Main2}. 
Let us consider the constants  $0 < C_2(A,\lambda_c) < 1 < C_1(A)$ given by Proposition \ref{pro:Main2}, and a $(N,r_1)$-localized deformation $g_0$ of $A$ in $ \mathcal{U}^0_{\varepsilon}(A) \cap \mathrm{PH}^c_A(\mathbb{T}^d)$ satisfying
\begin{equation*} \label{ineq:Open0}
\sup_{x \in \mathcal{O}} \|Dg_0|_{E^c_{g_0}(x)}\| < \lambda_c \quad \text{and} \quad \sup_{x \in \mathbb{T}^d \setminus \mathcal{O}} \|Dg_0|_{E^c_{g_0}(x)}\| < \lambda_s, 
\end{equation*}
where  $\lambda_c\in \left[1,\, \min\left\{C_1(A),\, \lambda_A^u\right\}\right)$ and $\lambda_s\in \left(\lambda_A^{ws},\,C_2(A,\lambda_c)\right)$. By Proposition \ref{pro:Main2} there are $h_1>0$ and a neighborhood $\mathcal{U}'$ of $g_0$ such that the $\nu$-center Lyapunov exponents of $\nu\in \mathrm{Prob}^{h_1}_{\mathrm{erg}}(g)$  are negative for every $g\in\mathcal U'$. 

Take $\mathcal{U}=\mathcal{U}'\cap\mathcal{U}''$, where $\mathcal{U}''$ is the neighborhood of $g_0$ given by Proposition \ref{prop:VolumeCrecimiento}, and let consider $g\in\mathcal{U}$. In particular, one has
\begin{equation}\label{rigidity}
    \chi_u(g) = h_{\mathrm{top}}(A), \quad\forall g \in \mathcal{U}.
\end{equation} 
We analyze any measure $\mu \in \mathrm{Prob}_{\mathrm{erg}}(g)$ in two cases:
\begin{itemize}
    \item If $h_\mu(g) \leq h_1$, then strictly $h_\mu(g) < h_{\mathrm{top}}(A)$.
    \item If $h_\mu(g) > h_1$, then $\mu \in \mathrm{Prob}_{\mathrm{erg}}^{h_1}(g)$. Since $g\in\mathcal{U}'$, the center exponents are negative. Therefore, by \eqref{rigidity} and the Hua-Saghin-Xia inequality, we obtain $$h_{\mu}(g) \le \chi_u(g) + \sum_{\lambda_i^{c}(g)>0}\lambda_i^{c}(g)\,m_i= h_{\mathrm{top}}(A).$$
\end{itemize}
In any case, we get $h_\mu(g) \le h_{\mathrm{top}}(A)$. So, by taking the supremum over all ergodic measures, via the Variational Principle we obtain
$$h_{\mathrm{top}}(g)=\sup\left\{{h}_{\nu}(g): \nu\in \text{Prob}_{erg}(g)\right\}\leq h_{\mathrm{top}}(A).$$ 
Finally, since $g$ and $A$ are semi-conjugated by $\pi$, the reverse inequality $$h_{\mathrm{top}}(A) \le h_{\mathrm{top}}(g)$$ holds. Therefore, $h_{\mathrm{top}}(g) = h_{\mathrm{top}}(A)$, concluding the proof.
\end{proof}

\section{Carrasco \textit{et al} example revisited}\label{constructionCarrasco}

{In this section, we present a three-dimensional version of the example constructed by Carrasco et al. in \cite{CLPV}, and we show that it satisfies the conclusion of Theorem \ref{teo:MainTheorem} for suitable choices of the isotopic linear part $A$. Before stating the result, we recall that a diffeomorphism $f$ is \textit{$C^1$-robustly transitive} if there exists a $C^1$-neighborhood $\mathcal{U}$ of $f$ such that every map $h \in \mathcal{U}$ is transitive.}

{The main result of this section is the following.
\begin{propo}\label{example}
Let $A\colon \mathbb{T}^3 \to \mathbb{T}^3$ be a linear Anosov diffeomorphism induced by a matrix $A\in SL(3,\mathbb{Z})$ with eigenvalues $\lambda_{ss}, \overline{\lambda_{ss}}, \lambda{_u}$ satisfying
$$0<\lambda_A^s=|{\lambda_{ss}}|\leq\frac{1}{10}<1<100\leq\lambda_u.$$ There exists a partially-hyperbolic diffeomorphism $g$ isotopic to $A$ such that
\begin{itemize}
    \item [(i)] The partially-hyperbolic splitting of $g$ is $T\mathbb{T}^3=E_g^{cs}\oplus E_g^u$, where $E_g^{cs}$ is a two-dimensional indecomposable subundle. 
    \item [(ii)] $g$ has fibers with positive topological entropy. 
    \item [(iii)] $g$ is robustly transitive. 
    \item [(iv)] The topological entropy satisfies $h_{top}(g)=h_{top}(A)$.   
\end{itemize}
\end{propo}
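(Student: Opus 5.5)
The construction follows the Bronzi--Tahzibi/Carrasco et al.\ scheme. The perturbation acts only inside a small ball around a fixed point, and the conclusions come from Theorem~\ref{teo:MainTheorem}.

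\textbf{The linear model.} Since the stable eigenvalues $\lambda_{ss},\overline{\lambda_{ss}}$ are complex conjugate, $E^s_A$ is two-dimensional and $DA|_{E^s_A}$ is a homothety by $|\lambda_{ss}|\le 1/10$ composed with a rotation. Hence $E^s_A$ admits no invariant one-dimensional subbundle. We take $E^{ws}_A=E^s_A$, so $d_c=2$ and the strong stable bundle is trivial. Then $h_{\mathrm{top}}(A)=\log\lambda_u\ge\log 100$ and $\lambda_A^{ws}\le 1/10$. From these we compute $C_1(A)$ and $C_2(A,\lambda_c)$ in Proposition~\ref{pro:Main2}. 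The gap between $\log 100$ and $\log(1/10)$ should leave room for a $\lambda_c>1$ with $\lambda_c<\min\{C_1(A),\lambda_A^u\}$, say $\lambda_c=2$ or so. It should also give $C_2(A,\lambda_c)>\lambda_A^{ws}$, so that we can pick $\lambda_s\in(\lambda_A^{ws},C_2)$, for instance $\lambda_s=1/5$. Checking these numeric inequalities is the routine part.

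\textbf{The deformation $g$.} Fix the fixed point $p=0$ of $A$ and let $r\le r_1$, where $r_1$ and $\varepsilon$ come from Theorem~\ref{teo:MainTheorem} with $N=1$. Inside $B(p,r)$ we write $g=A\circ\phi$, where $\phi$ is supported in the ball, preserves the $E^s_A$-planes, and is $C^0$-close to the identity. Along the center-stable plane through $p$ we choose $\phi$ so that $p$ becomes a center-unstable source for $g$ restricted to its $cs$-plane: $Dg(p)|_{E^c}$ is a conformal expansion with norm below $\lambda_c$, and cone fields keep $E^u$ uniformly dominating. This turns the $cs$-plane dynamics near $p$ into a planar map with a repeller surrounded by contraction. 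By choosing $\phi$ as a rotation-twist composed with radial expansion, we can create a horseshoe for $g|_{W^{cs}}$ that lies inside a single fiber $\pi^{-1}(\pi(p))$, since the fiber of $p$ is the region where orbits stay within $2\delta$ of $p$. This gives (ii).

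Outside the ball $g=A$, so $\|Dg|_{E^c}\|\le 1/10<\lambda_s$ there. The pointwise bounds are then met, and if the construction is a $C^0$-small continuous deformation along an isotopy of partially hyperbolic maps starting at $A$, then $g\in\mathrm{PH}^c_A(\mathbb{T}^3)$. Item (i) follows because $E^c_g$ stays a $2$-dimensional bundle close to $E^s_A$ away from the ball, where the rotation prevents any invariant line field. Item (iv) follows directly from Theorem~\ref{teo:MainTheorem}. For (iii), we follow the argument of Mañé and Bonatti--Díaz--Viana for DA maps. The unstable foliation is minimal because it is $C^0$-close to that of $A$ (via the semiconjugacy and a one-dimensional unstable bundle, as in Potrie). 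Together with the fact that $cs$-discs of definite size are preserved up to contraction away from the ball, this gives robust transitivity.

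\textbf{Main obstacle.} The hardest step is reconciling (ii) with the hypotheses. A horseshoe in a $2$-dimensional fiber with entropy $h>0$ needs center expansion of order $e^{h/2}$, but the expansion must stay below $\lambda_c<C_1(A)$. At the same time, the fiber must remain a genuine fiber, that is, an invariant planar set of diameter less than $2\delta$, which requires the source region to be repelling inside the $cs$-leaf but trapped by the contraction outside. I would first build the planar model explicitly: a planar map equal to $z\mapsto\lambda_{ss}z$ outside a disc and containing a small rotating horseshoe inside, with $\|D\|\le\lambda_c$. I would then check that the horseshoe's entropy, which is at most $2\log\lambda_c$, is positive while remaining compatible with the Lemma~\ref{le:Aux} constraints.
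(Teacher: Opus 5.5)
You follow the same route as the paper: embed a planar horseshoe in the $cs$-plane through a fixed point inside a ball of radius $r\le r_1$, check partial hyperbolicity with unstable cones using $\lambda_u\ge 100$, get (ii) because the horseshoe is a compact invariant set inside $\pi^{-1}(\pi(p))$, and get (iv) from Theorem~\ref{teo:MainTheorem} with $N=1$. The gap is that the dynamics you prescribe at $p$ makes item (iii) false. You ask that $Dg(p)|_{E^c}$ be a conformal expansion, so that $p$ is a repeller in its $cs$-plane. Since $\phi$ preserves the $E^s_A$-planes, $Dg(p)$ is block triangular with diagonal blocks $R\,D_w\phi(p)$ and $\lambda_u$. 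All three eigenvalues then have modulus $>1$, so $p$ is a hyperbolic \emph{source} of $g$ on $\mathbb{T}^3$.

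A diffeomorphism with a source is never transitive. Take a small ball $V$ around $p$ with $g^{-1}(\overline V)\subset V$; then $g^{n}(U)\cap V=\emptyset$ for every $n\ge 0$ and every open $U$ disjoint from $V$. Sources also persist under $C^1$-perturbation, so robust transitivity fails too. Your transitivity sketch hides this. What is needed is not that $cs$-discs ``of definite size are preserved'', but that backward iterates of arbitrarily small $cs$-discs become large. Near a center source, $g^{-1}$ instead shrinks every $cs$-disc onto $p$.

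The paper avoids this by making the planar model area-contracting. The Smale horseshoe is taken with $\lambda_1\lambda_2<1$ and is a contraction on the caps, so its extra fixed point $p_0$ is a sink in the plane and a saddle in $\mathbb{T}^3$. The paper thus arranges $|\det Dg|_{E^{cs}}|<1$ everywhere. This backward area growth of $cs$-discs, together with the growth of unstable arcs, drives the argument in Lemma~\ref{robustly}. You should drop the source and use such a dissipative horseshoe.

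Separately, a ``rotation-twist composed with radial expansion'' cannot create a horseshoe. Such a map is a skew product over a monotone radial map with rotations on the circles, so it has zero entropy. You need a genuine fold, which is the explicit planar model of your last paragraph. It must then be extended to $\mathbb{T}^3$ by an isotopy $\psi_z$ to $R$, parametrized by the unstable coordinate as in \eqref{eq:defpert}, so that $g=A$ near the boundary of the ball while the $\partial_z$-terms stay bounded for the cone argument.
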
}

For the sake of completeness, we begin by recalling the Smale horseshoe, which constitutes the main piece of the construction of the example stated in the proposition above.

\subsection{Smale's Horseshoe} Let $S = [0,1] \times [0,1]$ denote the unit square. Consider the horizontal strips $H_1, H_2 \subset S$, where $H_1$ is contained in the lower half of $S$ and $H_2$ in the upper half. Similarly, let $V_1, V_2 \subset S$ be vertical strips, with $V_1$ contained in the left half of $S$ and $V_2$ in the right half. 

We define a $C^\infty$ diffeomorphism $\sigma:\mathbb R^2 \to \mathbb R^2$ such that $\sigma(H_j)=V_j$ for $j=1,2$, $S\cap \sigma^{-1}(S)=H_1\cup H_2$, and 
$$
D\sigma(p)=\begin{pmatrix}
    \lambda_1 & 0\\
    0   & \lambda_2
\end{pmatrix},\quad \forall p\in H_1 \cup H_2,
$$
where
\begin{equation}\label{choiceec}
    \lambda_1<1/2<1<2<\lambda_2<3\quad\text{ and }\quad\lambda_1\lambda_2<1.
\end{equation}

Let $B_1$ be the semidisk at the bottom of $S$ and $B_2$ the semidisk at the top of $S$, and let $N=B_1\cup S\cup B_2$ be the topological disk that is the union of these three regions. Let $G$ be the horizontal gap between $H_1$ and $H_2$, $H_3$ be the top horizontal strip in $S$ above $H_2$, and $H_0$ be the bottom horizontal strip in $S$ below $H_1$. See Figure~\ref{fig:horseshoe-setup}.

\begin{figure}[ht]
    \centering
    \includegraphics[width=0.55\linewidth]{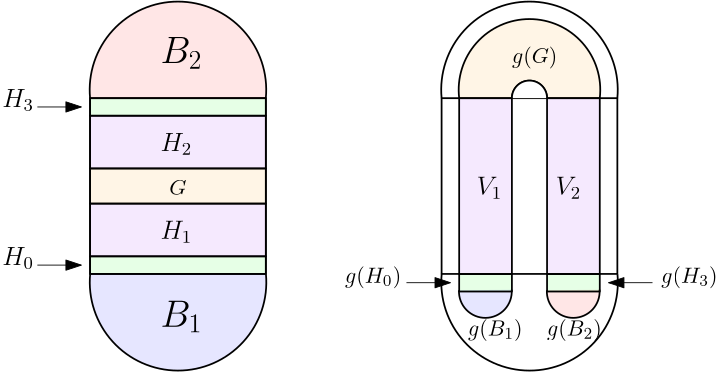}
    \caption{Geometry of the Smale horseshoe construction showing horizontal strips $H_i$, vertical strips $V_j$, and the topological disk $N = B_1 \cup S \cup B_2$.}
    \label{fig:horseshoe-setup}
\end{figure}

We extend $\sigma$ to $S$ as follows: 
\begin{itemize}
    \item $\sigma(G) \subset B_2$ and the image arcs from the top of $V_1$ to the top of $V_2$. This part of the map can be taken so that
$$
\left| \frac{\partial \sigma}{\partial x} (q) \right| = \lambda_1 \quad \text{for } q \in G.
$$
\item The image of $H_0 \cup H_3$ is contained in $B_1$; the first coordinate function of $\sigma$ on $H_0 \cup H_3$ is a contraction by a factor of $\lambda_1$, while the second coordinate function of $\sigma$ on $H_0 \cup H_3$ changes from an expansion by $\lambda_2$ on the boundary with $H_1 \cup H_2$ to a contraction at the boundary with $B_1 \cup B_2$. 
\end{itemize}
 
On the other hand, we define $\sigma$ in a such way that $\sigma(B_1) \subset B_1$, and ensure that $\sigma$ is a contraction on $B_1$, so that $\sigma$ has a unique fixed point $p_0 \in B_1$. Additionally, we define $\sigma$ on $B_2$ so that $\sigma(B_2) \subset B_1$. Thus, $\sigma$ maps the topological disk $N$ into itself. Moreover, the conditions imposed on $\sigma$ on $N$ ensure that $\sigma\vert_N$ is area-contracting.

Finally, the map is extended to all of $\mathbb R^2$ such that 
\begin{enumerate}
    \item $\sigma(B(p_0,\delta)) = N$ for some $\delta > 1$, where $B(p_0,\delta)$ denotes the ball centered in the fixed point $p_0$ with radius $\delta$, such that $N\subsetneq  B(p_0,\delta)$.
   \item For every $ x \in \mathbb{R}^2 \setminus N $, the forward orbit $ \{\sigma^n(x)\}_{n\geq 0} $ enters $ N $ in finite time and remains there. In particular, $ \lim_{n \to \infty} \text{dist}(\sigma^n(x), \Omega(\sigma)) = 0 $.
\end{enumerate}
Consequently, every point in $\mathbb{R}^2$ eventually enters the topological disk $N$ under forward iteration of $\sigma$.

\begin{lemma}\label{propo:horseshoe} Let $\sigma$ be the Horseshoe map described above. Then:
    \begin{enumerate}
        \item The nonwandering set $\Omega(\sigma)=\Lambda\cup \{p_0\}$, where 
        $$
        \Lambda=\bigcap_{j\in \mathbb Z} \sigma^j(S)=C_1\times C_2,
        $$
        where $C_1$ and $C_2$ are Cantor sets.
        \item The restriction $ \sigma|_\Lambda $ is topologically conjugate to the full shift on two symbols. Consequently, $ h_{\mathrm{top}}(\sigma) = \log 2 $.

         \item For each point $ z=(x_1,x_2) \in \Lambda $, the local unstable manifold $ W_{\mathrm{loc}}^u(z) $ is a vertical segment, while the local stable manifold $W_{\mathrm{loc}}^s(z)$ is a horizontal segment. In particular, we have
        $$
        W^u(z) = \bigcup_{n \geq 0} \sigma^n\left( \{x_1\} \times [0,1] \right) = \bigcup_{n \geq 0} \sigma^n\left( W_{\mathrm{loc}}^u(z) \right).
        $$
        
        This manifold accumulates on itself and winds densely throughout $ \Lambda $, forming a connected and indecomposable continuum. 

       \item The global unstable set of $ \Lambda $ is given by
       \begin{align*}
       W^u(\Lambda) &= \bigcup_{z\in \Lambda} W^u(z)\\
                   &=\bigcup_{j=0}^\infty \sigma^j(C_1 \times [0,1]) = \bigcap_{j=0}^\infty \sigma^j(N).
       \end{align*}
      
      Therefore, this set winds around $\Lambda$, accumulates on itself, and saturates the attractor in the unstable direction. Moreover, $ W^u(\Lambda) $ is connected, a property that follows from the transitivity of $ \Lambda $.
        \item For $\delta > 1$ satisfying $\sigma(B(p_0,\delta)) = N$, define 
        $$
        [p_0] = \{x \in \mathbb{R}^2 : \text{dist}(\sigma^n(x), p_0) < 2\delta, \ \forall n \in \mathbb{Z}\}.
        $$
        Then, $[p_0] = \overline{W^u(\Lambda)}$.
    \end{enumerate}
\end{lemma}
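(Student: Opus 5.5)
This is the classical Smale horseshoe analysis, and the proof takes the items in order, using the explicit linear structure of $\sigma$ on $H_1\cup H_2$.

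\textbf{Items (1) and (2).} First I will describe $\Lambda=\bigcap_{j\in\mathbb Z}\sigma^j(S)$.
\begin{itemize}
\item Since $S\cap\sigma^{-1}(S)=H_1\cup H_2$ and $\sigma$ acts affinely on $H_j$, the set $\bigcap_{j=0}^{n}\sigma^{-j}(S)$ is a union of $2^n$ horizontal strips. Their heights shrink like $\lambda_2^{-n}$, because $\sigma$ expands the vertical direction by $\lambda_2>2$.
\item Dually, $\bigcap_{j=0}^{n}\sigma^{j}(S)$ is a union of $2^n$ vertical strips of width of order $\lambda_1^n$, with $\lambda_1<1/2$.
\item Intersecting the two families gives $\Lambda=C_1\times C_2$, where $C_1$ and $C_2$ are Cantor sets obtained by the nested-interval construction.
\end{itemize}
The itinerary map $z\mapsto (a_j)_{j\in\mathbb Z}$, with $\sigma^j(z)\in H_{a_j}$, is a bijection onto $\{1,2\}^{\mathbb Z}$. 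It is continuous and has continuous inverse because the cylinder sets correspond to rectangles of exponentially small diameter. It conjugates $\sigma|_\Lambda$ to the shift, so $h_{\mathrm{top}}(\sigma|_\Lambda)=\log 2$.

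Next I will identify the nonwandering set.
\begin{itemize}
\item Points of $S$ that are not in $\Lambda$ eventually fall into $G\cup H_0\cup H_3$, hence into $B_1\cup B_2$, and then into $B_1$. There $\sigma$ is a contraction, so their orbits converge to $p_0$.
\item Points outside $N$ enter $N$ in finite time and never return to a neighbourhood of where they started.
\item Hence every nonwandering point lies in $\Lambda\cup\{p_0\}$, and both of these sets are nonwandering.
\end{itemize}
Finally, the entropy of $\sigma$ is concentrated on $\Omega(\sigma)$. Since $h_{\mathrm{top}}(\{p_0\})=0$, this gives $h_{\mathrm{top}}(\sigma)=\log 2$.

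\textbf{Items (3) and (4).} Because $D\sigma$ is diagonal on $H_1\cup H_2$, the stable and unstable cone fields are exactly horizontal and vertical.
\begin{itemize}
\item For $z=(x_1,x_2)\in\Lambda$, the points staying in $S$ in forward time along the same forward itinerary form the horizontal segment through $z$. This is $W^s_{\mathrm{loc}}(z)$.
\item Similarly, the vertical segment $\{x_1\}\times[0,1]$ is $W^u_{\mathrm{loc}}(z)$.
\item The global unstable manifold is obtained by iterating: $W^u(z)=\bigcup_n\sigma^n(W^u_{\mathrm{loc}}(\sigma^{-n}z))$. Since $\Lambda$ is $\sigma$-invariant, this equals the union of forward images of local vertical segments through points of $C_1$.
\end{itemize}
Density and self-accumulation of $W^u(z)$ follow from the density of the orbit of a transitive point of the shift.

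For item (4), I will prove the two inclusions.
\begin{itemize}
\item $\bigcup_j\sigma^j(C_1\times[0,1])\subset\bigcap_j\sigma^j(N)$, because $\sigma(N)\subset N$ and $C_1\times[0,1]\subset N$.
\item For the reverse inclusion, $\sigma^n(N)$ is a nested sequence: each step folds the vertical strips and contracts $B_1\cup B_2$.
\end{itemize}
Connectedness is immediate, since each $\sigma^j(C_1\times[0,1])$ is a union of arcs chained through the folding region $G$ to the arc through $p_0$. I expect the reverse inclusion to be the delicate part. In fact, a precise statement likely requires closures: $\bigcap\sigma^j(N)$ also contains $p_0$ and the arcs through $G$, so I would prove $\overline{W^u(\Lambda)}=\bigcap_j\sigma^j(N)$.

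\textbf{Item (5).} I will argue both inclusions.
\begin{itemize}
\item If $x\in\bigcap_j\sigma^j(N)$, then its entire backward orbit lies in $N\subset B(p_0,\delta)$ and its forward orbit stays in $N$. Hence every iterate is within $2\delta$ of $p_0$, so $x\in[p_0]$.
\item Conversely, suppose $x\in[p_0]$. Its backward orbit stays in $B(p_0,2\delta)$. If $x\notin\bigcap_j\sigma^j(N)$, then some backward iterate $\sigma^{-n}(x)$ lies outside $N$.
\item Using the extension property $\sigma(B(p_0,\delta))=N$ together with expansion of $\sigma^{-1}$ outside $N$, backward orbits leaving $N$ escape every bounded set. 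This contradicts $x\in[p_0]$.
\end{itemize}

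The main obstacle is exactly this escape claim. It is not literally stated in the construction and must be built into the extension of $\sigma$ to $\mathbb R^2$. I would first try to arrange that $\sigma^{-1}$ is uniformly expanding, radially about $p_0$, on $\mathbb R^2\setminus N$, so that backward orbits outside $N$ leave $B(p_0,2\delta)$.
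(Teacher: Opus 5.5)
Your route to (5) differs from the paper's. The paper proves only (5), citing Robinson's book for (1)--(4), and argues directly. Invariance of $\overline{W^u(\Lambda)}\subset N$ gives one inclusion. For the converse it claims that the orbit closure of $x\in[p_0]$ lies in the isolated hyperbolic set $\Lambda\cup\{p_0\}$. That step cannot hold as written, since the orbit closure contains $x$ itself; it is the $\alpha$- and $\omega$-limit sets, not the orbit closure, that lie in $\Omega(\sigma)$. Your factorization $[p_0]=\bigcap_{j\ge0}\sigma^j(N)=\overline{W^u(\Lambda)}$ is the sound way to organize the proof. You are also right that (4) is off: in fact $\bigcap_{j\ge0}\sigma^j(N)=W^u(\Lambda)\cup\{p_0\}$. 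The arcs through $G$ and their images already lie in $\bigcup_j\sigma^j(C_1\times[0,1])$, so $p_0$ is the only extra point. However, both nontrivial inclusions in your chain are left open, so (5) is not yet proved.

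\emph{The escape claim.} You should not alter the extension, since that would prove the lemma for a different map; the claim already follows from what is given. Suppose $y=\sigma^{-k}(x)\notin N$. Then $\sigma^{-1}(y)\notin B(p_0,\delta)$ because $\sigma(B(p_0,\delta))=N$. Since $N\subset B(p_0,\delta)$, induction gives $\sigma^{-m}(y)\notin B(p_0,\delta)$ for all $m\ge1$. If $x\in[p_0]$, these points stay in $B(p_0,2\delta)$. Hence $\alpha(y)$ is a nonempty compact invariant set disjoint from the open ball $B(p_0,\delta)\supset N$. But by property (2) of the extension, the forward orbit of any $w\in\alpha(y)$ enters $N$ while staying in $\alpha(y)$, a contradiction. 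Hence $[p_0]\subset\bigcap_{j\ge0}\sigma^j(N)$.

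\emph{The inclusion $\bigcap_{j\ge0}\sigma^j(N)\subset\overline{W^u(\Lambda)}$.} ``Nested, folding, contracting'' does not identify the points of the intersection. Here is a direct argument.
\begin{enumerate}
\item For $y\in\bigcap_{j\ge0}\sigma^j(N)$, the whole backward orbit lies in $N=B_1\cup S\cup B_2$. Since $\sigma(B_1\cup B_2)\subset B_1$, the set of $m\ge0$ with $\sigma^{-m}(y)\in B_1\cup B_2$ is an initial segment of $\mathbb{N}$.
\item If it is all of $\mathbb{N}$, then $y\in\bigcap_m\sigma^m(B_1)=\{p_0\}$, because $\sigma|_{B_1}$ is a contraction.
\item Otherwise $\sigma^{-m}(y)\in S$ for all $m\ge m_0$. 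So $\sigma^{-m_0}(y)\in\bigcap_{i\ge0}\sigma^i(S)=C_1\times[0,1]$, and therefore $y\in\sigma^{m_0}(C_1\times[0,1])\subset W^u(\Lambda)$.
\item Finally, $p_0\in\overline{W^u(\Lambda)}$. For $x_1\in C_1$, the segment $\{x_1\}\times[0,1]$ crosses $H_0$, which is mapped into $B_1$, where forward orbits converge to $p_0$.
\end{enumerate}
This yields $\bigcap_j\sigma^j(N)=\overline{W^u(\Lambda)}$ and completes (5) with no hyperbolic-set machinery.

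A smaller hole in (1): points of $([0,1]\setminus C_1)\times C_2$ never leave $S$ in forward time, so your forward-escape argument misses them. They are wandering because their backward orbits must exit $N$: a backward iterate in $B_1\cup B_2$ would force the point itself into $B_1$.
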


\begin{proof} 
We prove only item (5); for the remaining items, see \cite{Robinson}. Fix $\delta > 1$ such that $\sigma(B(p_0,\delta)) = N$. We aim to show that $[p_0] = \overline{W^u(\Lambda)}$. On one hand, by construction and item (4) one has  $p_0 \in \overline{W^u(\Lambda)} \subset N$, which implies that $\sigma^n(x) \in N$ for all $n \in \mathbb{Z}$; that is, the full orbit of $x$ remains in $N$. In particular, we have 
\begin{displaymath}
    \operatorname{dist}(\sigma^n(x), p_0) \leq \operatorname{diam}(N) \leq 2\delta, \quad \forall n \in \mathbb{Z}.
\end{displaymath}
Hence, $x \in [p_0]$.

On the other hand, given $x \in [p_0]$, the set $\mathcal{O}(x)=\overline{\{\sigma^n(x)\}_{n \in \mathbb{Z}}}$ is a nonempty, compact, and invariant subset of $\overline{B(p_0, 2\delta)}$. Since $\Lambda\cup\lbrace p_0\rbrace$ is isolated and hyperbolic it follows that $$\mathcal{O}(x)\subset\Lambda\cup\lbrace p_0\rbrace\subset \bigcup_{z\in \Lambda\cup\lbrace p_0\rbrace} W^u(z)\subset \overline{W^u(\Lambda)},$$
because $p_0\in \overline{W^u(\Lambda)}$. Therefore, if $x\neq p_0$, one has $x\in\overline{W^u(\Lambda)}$.
\qedhere
\end{proof}

\subsection{Construction of the DA map}\label{sec:DA_Carrasco} Let $\mathbb{D}_r$ denote the closed disk of radius $r$ centered at the origin in $\mathbb{R}^2$, and let $R$ be a $2\times 2$ rotation matrix whose eigenvalues $\lambda_{ss}=\alpha+\beta i$, $\overline{\lambda_{ss}}=\alpha-\beta i$ satisfy $\lambda_A^s=\vert\lambda_{ss}\vert\leq \frac{1}{10}$. Consider the horshoe map $\sigma:\mathbb{R}^2 \to \mathbb{R}^2$ given above. Following the work of Bronzi–Tahzibi \cite{BT}, we construct a one-parameter family of diffeomorphisms 
$
\{\psi_t: \mathbb{D}_2 \to \mathbb{D}_2\}_{t\in[0,1]}
$
that deforms the horseshoe into a contraction, while controlling the dynamics both inside and outside a prescribed ball. 

Assume that the family $\lbrace\psi_t\rbrace_{t\in[0,1]}$ satisfies:

\begin{enumerate}
  \item At $t=0$, the map interpolates between the horseshoe and a contraction:
  $$
  \psi_0(w)=
  \begin{cases}
  Rw, & \text{if }\|w\|\geq \tfrac{3}{2},\\
  \sigma(w), & \text{if }\|w\|\leq 1,
  \end{cases}
  \qquad \text{with } \psi_0(\mathbb{D}_{3/2}) \subset \mathbb{D}_{1}.
  $$
  \item For every $t\in[0,1]$ and $\|w\|\geq 3/2$, we fix $\psi_t(w)=Rw$.
  \item There exists $t_0\in(0,1)$ such that $\psi_{t_0}$ coincides with $\lambda_1\cdot id$  on $\mathbb{D}_1$.
  \item At $t=1$, the map reduces to a contraction: $\psi_1(w)= R(w)$ for all $w\in \mathbb{D}_2$.
  \item The derivative varies continuously with $(t,w)$:
  $$
  (t,w) \longmapsto D \psi_t(w) =
  \begin{pmatrix}
  \alpha_t(w) & \beta_t(w) \\
  \theta_t(w) & \delta_t(w)
  \end{pmatrix},
  $$
  with $\theta_t$ and $\beta_t$ uniformly bounded by $1$, and the following control conditions: Let $w=(w_1,w_2)$ and let $\lambda_1<1/2<1<2<\lambda_2$ and $\lambda_1\lambda_2<1$ and $t_0,t_1\in(0,1)$ close enough.
  \begin{enumerate} 
    \item \emph{Control in the $w_1$-direction}. For $t\in[0,t_0]$, we have $|\alpha_t|\leq \lambda_1$. while for $t\in[t_1,1]$, one has $|\alpha_t|\leq \lambda_s$.
    \item \emph{Control in the $w_2$-direction}. For $t\in[0,t_0]$, we have $1\leq|\delta_t|\leq \lambda_2$, while for $t\in[t_1,1]$, one has  $|\delta_t|\leq\lambda_A^s$.
  \end{enumerate}
\end{enumerate}

\medskip

Thus, the family $\{{\psi}_t\}_{t\in[0,1]}$ provides a  $C^1$--deformation between the Smale horseshoe and a contraction. In particular, at $t=0$ inside $\mathbb{D}_{1}$, the map ${\psi}_0$ coincides with the Smale horseshoe. For $t\geq t_0$, the map ${\psi}_t$ has a unique attracting fixed point in $\mathbb{D}_{2}$, and for all $t>t_0$ it defines a contraction. Furthermore, the family can be extended symmetrically to $t \in [-1,1]$ by setting ${\psi}_t={\psi}_{-t}$ for $t<0$. Figure 2 helps to visualize the family $\{{\psi}_t\}_{t\in[0,1]}$. 

\begin{figure}[ht]
\includegraphics[scale=0.25]{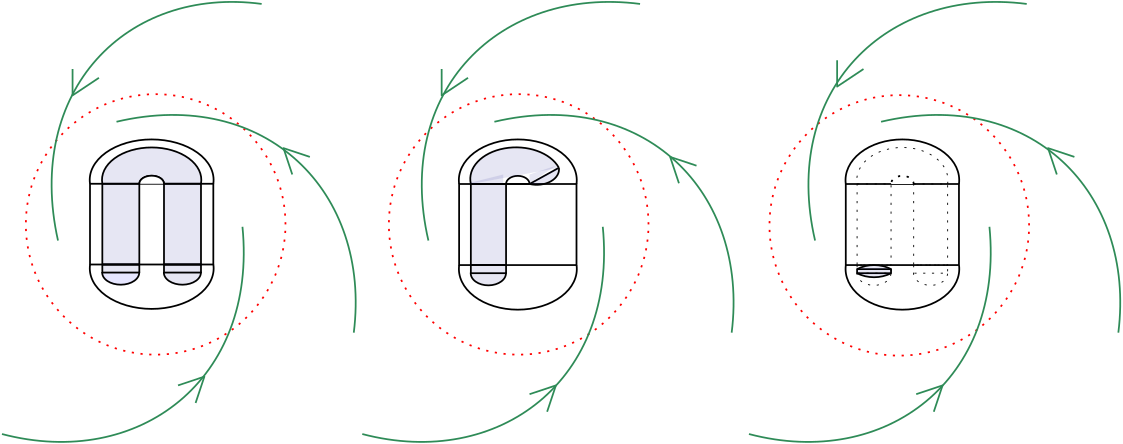}
\caption{From the left: $\psi_0$; $\psi_t,\; 0<t<t_1$; $\psi_{t_1}$.}
\end{figure}

\begin{remark}\label{specific}
An explicit form of the deformation $\psi_t$ is as follows: For instance, take $\varphi_t: \mathbb{R}^2 \to \mathbb{R}^2$, $t\in[0,1]$, such that $\varphi_t\vert_A$ is the identity map,  
$$
(\sigma\circ \varphi_t)(x, y) = 
\begin{pmatrix}
\lambda_1 & 0 \\
0 & \lambda_2+(\lambda_1-\lambda_2)\frac{t}{t_0}
\end{pmatrix}
\begin{pmatrix}
x \\
y
\end{pmatrix},\quad \forall (x,y)\in S,\quad\forall t\in[0,t_0],
$$
and $\varphi_t\vert_{B_1}$ is defined in such a way that $\varphi_0\vert_{B_1}$ is the identity map, and for every $t\in[0,t_0]$ it sends the disk $\sigma(\varphi_t(B_2))$ to the disk $\sigma(B_2)$ via rotations and translations. Then, we define $\varphi_t$, $t\in[t_0,1]$, so that 
\begin{displaymath}
   (\sigma\circ \varphi_t)(x,y)=\begin{pmatrix}
    \frac{\alpha-\lambda}{t_1-t_0}(t-t_1) & \frac{\beta}{t_1-t_0}(t-t_0)\\
    \frac{\beta}{t_0-t_1}(t-t_0)   & \frac{\alpha-\lambda}{t_1-t_0}(t-t_1)
\end{pmatrix},\quad\forall (x,y)\in\mathbb{D}_2,\quad t\in[t_0,t_1],
\end{displaymath}
and $(\sigma\circ \varphi_t)=R$ for every $t\in[t_1,1]$. Finally, define $\psi_t=\sigma\circ \varphi_t$. 
\end{remark}

Now, notice that there is a base $\mathcal{B}=\lbrace v_1,v_2, v_3\rbrace$ of $\mathbb{R}^3$ such that $A$ has the block form
$$
A=\begin{pmatrix}
 R & 0 \\
0 & \lambda_u
\end{pmatrix},
$$
where $R$ is as before. Recall that $A$ also denotes the associated linear Anosov on $\mathbb{T}^3$. The choice of the eigenvalues guarantees that it admits a dominated splitting of the form $T\mathbb{T}^3=E^s_A \oplus E^u_A$, where $E^s_A$ is a two-dimensional indecomposable subbundle. Following the approach of \cite{CLPV}, we deform the map $A$ within a small ball around a fixed point $p \in \mathbb{T}^3$ through the isotopy $\{{\psi}_t\}_{t\in[0,1]}$. {More precisely, let $r_1>0$ and $\rho>0$ small enough. Choose $0<r=r(r_1,\rho)<r_1$ so that there exists a smooth chart
$$
\phi: B(p,r/2) \to \mathbb{D}_{r_1} \times [-\rho,\rho],
\qquad \phi(p)=\mathbf{0},
$$
for which the map $A$ takes the form
$$
\phi \circ A \circ \phi^{-1}(w,z) = \big(R(w), \, \lambda_u z\big),\quad \forall (w,z)\in \mathbb{D}_{r} \times [-\rho,\rho]. 
$$}

Let $h:\mathbb{D}_{r} \to \mathbb{D}_2$  be the homothety $h(w) =\tfrac{2}{r}w$, and define the rescaled isotopy by
\begin{equation}\label{eq:deformation}
\widehat{\psi}_t = h^{-1} \circ \psi_t \circ h.
\end{equation}

By property $(b)$, for every $t\in[0,\,1]$ we have
$
\widehat{\psi}_{t}(w) =  R(w)  \ \text{for all } \|w\| \geq {3r/2}.
$
In particular, at $t=0$, $\widehat{\psi}_{t}$ exhibits a Smale horseshoe inside of $\mathbb{D}_{r/2}$.
Moreover, one has a uniform bound
 $$\sup_{w\in\mathbb{D}_{r/2}}\|D\widehat{\psi}_{t}(w)\|=\sup_{w\in \mathbb{D}_1}\|D\psi_t(w)\|<3.$$ 

We then define a perturbed map $G:\mathbb{R}^3\to \mathbb{R}^3$ by the formula 
\begin{equation}\label{eq:defpert}
G(w,z)=\begin{cases}
  \left(\widehat{\psi}_{z}(w),\lambda_u z\right), &\text{ if } \ (w,z)\in \mathbb{D}_{r}\times[-\rho,\rho]\\
(R(w),\lambda_u z),   & \text{ if } \ (w,z)\not\in \mathbb{D}_{r}\times[-\rho,\rho]
\end{cases}.
\end{equation}

Due to the smoothness from property (5) and the matching across the boundary of $\mathbb{D}_{3r/2}$, the map $G$ results in a global diffeomorphism.

Finally, we obtain the desired deformation $g:\mathbb{T}^3\to \mathbb{T}^3$ , by setting
\begin{equation} \label{def:g}
g(q)=\begin{cases}
A(q), & \text{ if } q\not\in B(p,r)\\
\phi^{-1}\circ G \circ\phi(q), & \text{ if } q\in B(p,r)
\end{cases}.
\end{equation}

We show in the following proposition that the derived from Anosov diffeomorphisms $g$ exhibit a weaker form of hyperbolicity.

\begin{lemma}\label{PH}
    The map $g$ defined in \eqref{def:g} is partially hyperbolic.
\end{lemma}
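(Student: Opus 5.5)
Our plan is to verify partial hyperbolicity with splitting $E^{cs}\oplus E^u$ through the cone-field criterion. Outside $B(p,r)$ the map $g$ equals $A$, so the only work is inside the chart $\phi$. There $G(w,z)=(\widehat\psi_z(w),\lambda_u z)$, and its derivative is block lower triangular:
$$
DG(w,z)=\begin{pmatrix} D\widehat\psi_z(w) & \partial_z\widehat\psi_z(w)\\ 0 & \lambda_u\end{pmatrix}.
$$
Here the zero block appears because the $z$-coordinate evolves independently of $w$. As a consequence the plane field $\{z\text{-coordinate}=0\}$, namely the $w$-directions, is exactly $DG$-invariant. It is also globally invariant, since it coincides with $E^s_A$ outside the ball. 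We therefore take $E^{cs}_g$ to be this two-dimensional bundle, read through $\phi$ and extended by $E^s_A$ elsewhere. The construction of the unstable direction is the real content of the argument.

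\textbf{The unstable cone.} For the unstable direction we use the cone
$$
\mathcal C^u_\kappa=\{(v_w,v_z): \|v_w\|\le \kappa|v_z|\}
$$
around $\partial_z$. Applying $DG$ gives $v_z'=\lambda_u v_z$ and $\|v_w'\|\le \|D\widehat\psi_z\|\,\|v_w\|+\|\partial_z\widehat\psi_z\|\,|v_z|$. We then need two bounds. First, $\sup\|D\widehat\psi_z\|<3$, which follows from the stated uniform bound together with the fact that $\widehat\psi_z=R$ away from $\mathbb D_{3r/2}$. Second, $\partial_z\widehat\psi_z(w)=h^{-1}\bigl(\partial_t\psi_t\bigr)(h(w))\cdot(\text{reparametrization})$ must be bounded by some constant $K$. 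Combining these, we get
$$
\|v_w'\|/|v_z'|\le (3\kappa+K)/\lambda_u .
$$
Since $\lambda_u\ge 100$, choosing $\kappa$ large, say $\kappa\ge K$, yields $(3\kappa+K)/\lambda_u\le 4\kappa/100<\kappa$. The cone is therefore strictly invariant. Vectors in it expand by at least $\lambda_u/(1+\kappa\cdot 3/\lambda_u)$-type factors, uniformly bigger than $1$. Outside the ball the same cone is invariant under $A$, because $A$ is block diagonal. Intersecting the forward images $\bigcap_n Dg^n\mathcal C^u$ then produces a continuous invariant line field $E^u_g$ that is uniformly expanded.

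\textbf{Domination.} We must show $\|Dg|_{E^{cs}}\|<m(Dg|_{E^u})$. On $E^{cs}$ the norm is at most $3$, and on $E^u$ the conorm is at least roughly $\lambda_u\cdot(1+3\kappa/\lambda_u)^{-1}\ge 100/(1+o(1))$, with appropriate constants. This gives pointwise domination, with the ratio bounded by something like $3/50<1$. The complementary cone $\{|v_z|\le \kappa'\|v_w\|\}$ is invariant under $DG^{-1}$ in the same manner, which independently confirms that $E^{cs}$ is the dominated bundle. Indecomposability is not required for this lemma.

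\textbf{Main obstacle.} The main obstacle is controlling the mixed derivative $\partial_z\widehat\psi_z$. The isotopy parameter is $t=z$ with $z\in[-\rho,\rho]$, while the family is indexed by $t\in[-1,1]$. Moreover, the homothety has scale factor $2/r$. As a result, the $t$-derivative picks up a factor $\sim 1/\rho$ from the parameter rescaling. It is not amplified by $h$, because conjugating by a homothety leaves $h^{-1}\partial_t\psi_t h$ scaled by $r/2$, which only helps. We will therefore parametrize by $t=z/\rho$ and choose $r\ll\rho$, so that $K\lesssim (r/\rho)\sup\|\partial_t\psi_t\|$ is small. Even without this, a finite $K$ suffices once $\kappa$ is chosen after $K$, since $\lambda_u\ge100>3$ is what drives the cone invariance.
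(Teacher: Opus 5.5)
Your proposal is correct and follows essentially the paper's argument: $E^{cs}_g=E^s_A$ is exactly $Dg$-invariant because of the block-triangular form of $Dg$ in the chart, and $E^u_g$ comes from a forward-invariant cone around $E^u_A$; your wide cone ($\kappa\ge K$) simply replaces the paper's narrow one ($\alpha^2\ge 2/(\lambda_u^2-72)$). Your reparametrization $t=z/\rho$ with $r\ll\rho$ actually justifies the bound on $\partial_z\widehat\psi_z$ that the paper just assumes ($\le 1$). The one thing to tidy is that one-step expansion should be measured on $E^u_g$ itself, which lies in the cone of width $K/(\lambda_u-3)$, because for large $\kappa$ vectors near the boundary of $\mathcal{C}^u_\kappa$ need not expand.
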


\begin{proof}
By construction of $g$, we must to exhibit a dominated splitting of $T\mathbb{T}^3=E^{cs}_g\oplus E^{u}_g$, where $E^u_g$ is uniformly expanding. Notice that the sub-bundle $E^s_A$ remains $Dg$-invariant, while $E^u_A$ loses this invariance. Consequently, we set $E^{cs}:=E^s_A$ and derive the existence of $E^u_g$ via a cone-field construction around $E^u_A$.

By recalling that $E^s_A \oplus E^u_A$ is the splitting of the linear Anosov automorphism $A$, define the unstable cone field as follows: 
$$
\mathcal{C}^u_{\alpha}(q) := \{v = v^s + v^u \in T_q\mathbb{T}^3 : \|v^s\| \leq \alpha \|v^u\| \},
$$
where $v^s \in E^s_A$, $v^u \in E^u_A$, and $\alpha > 0$ is a small constant to be determined. We will show that this cone field is forward invariant under $Dg$.

Up to the local chart $\phi$, we have that the derivative of $g$, at any point $q=(w,z) \in \mathbb{D}_{r} \times [-\rho,\rho]$, has the form
$$
Dg(w, z) =
\begin{pmatrix}
\frac{\partial \,\widehat{\psi}_{z}}{\partial w}({w})  & \frac{\partial \, \widehat{\psi}_{z}}{\partial z}({w})  \\
0 & \lambda_u
\end{pmatrix}.
$$

Furthermore, writing $\overline{w}=h(w)$, we obtain by \eqref{eq:deformation} that $D\widehat{\psi}_z(w)=D{\psi}_z(\overline{w})$. Additionally, if ${\psi}_{z}(\overline{w}):= ({\psi}_{1}(\overline{w}, z),\, {\psi}_{2}(\overline{w}, z))$, we deduce 
\begin{equation}\label{eq:DS}
\frac{\partial \,\widehat{\psi}_{z}}{\partial w}({w}) =
\begin{pmatrix}
\partial_{w_1}{\psi}_{1}(\overline{w}, z) & \partial_{w_2}{\psi}_{1}(\overline{w}, z) \\
\partial_{w_1}{\psi}_{2}(\overline{w}, z) & \partial_{w_2}{\psi}_{2}(\overline{w}, z)
\end{pmatrix}
\quad \text{and} \quad
\frac{\partial \, \widehat{\psi}_z}{\partial z}({w}) =
\begin{pmatrix}
\partial_{z}{\psi}_{1}(\overline{w},z) \\ \partial_{z}{\psi}_{2}(\overline{w}, z)
\end{pmatrix}.
  \end{equation}

Now, given every $v\in \mathcal{C}^u_{\alpha}(q)$, it follows that 
$$
Dg(w, z)(v) =
\begin{pmatrix}
\frac{\partial \, \widehat{\psi}_{z}}{\partial w}({w})  & \frac{\partial \, \widehat{\psi}_{z}}{\partial z}({w})  \\
0 & \lambda_u
\end{pmatrix}
\begin{pmatrix}
    v_1\\ v_2\\ v_3
\end{pmatrix}=:
\begin{pmatrix}
    u_1\\ u_2\\ u_3
\end{pmatrix},
$$
where $
{u}_j := \nabla \psi_{j}(\overline{w}) \cdot (v_1,v_2,v_3), \ j=1,2,
$ and ${u}_3 := \lambda_u\, {v}_3$.

On one hand, we first do the following estimations for $j=1,2$,
$$
|u_j| \leq \| (\partial_{w_1} \psi_{j}, \partial_{w_2} \psi_{j}) \|_{\mathbb{R}^2} \, \| ({v}_1,{v}_2) \|_{\mathbb{R}^2} + |\partial_{z} \psi_{j}| \, |{v}_3|.
$$
In consequence, for every $v \in C^u_{\alpha}(q)$ we deduce
\begin{align*}
\|(u_1,u_2)\|_{\mathbb{R}^2}^2 & \leq \sum_{j=1}^2 \left( 2\| (\partial_{w_1} \psi_{j}, \partial_{w_2} \psi_{j}) \|_{\mathbb{R}^2}^2 \, \| ({v}_1,{v}_2) \|_{\mathbb{R}^2}^2 + |\partial_{z} \psi_{j}|^2 \, |{v}_3|^2 \right)\\
&\leq \sum_{j=1}^2 2\left(\alpha^2\| (\partial_{w_1} \psi_{j}, \partial_{w_2} \psi_{j}) \|_{\mathbb{R}^2}^2 + |\partial_{z} \psi_{j}|^2 \right) |v_3|^2 \\
&\leq 2\left[ \alpha^2 \left\| \frac{\partial \, \widehat{\psi}_{z}}{\partial w}({w}) \right \|_F^2 + \left\| \frac{\partial \, \widehat{\psi}_{z}}{\partial z}({w})  \right\|_F^2 \right] |v_3|^2.\\
\end{align*}

On the other hand, one can assume that the map $ (w,z)\mapsto \left\| \frac{\partial \, \widehat{\psi}_{z}}{\partial z}({w})\right\|_F$ is uniformly bounded by 1. Moreover, from the relation between the Frobenius norm and the operator norm, we obtain 
$$
\left\| \frac{\partial \, \widehat{\psi}_{z}}{\partial w}({w}) \right \|_F^2
= \left\| \frac{\partial \, {\psi}_{z}}{\partial w}(\overline{w}) \right \|_F^2
\leq 2  \left\| D g (\overline{w}, z) \mid_{\mathbb{D}_1\times\{z\}} \right\|^2
\leq 18.
$$
Therefore, since $\lambda_u> 100$ (because $\lambda_A^s\leq\frac{1}{10}$ and $(\lambda_A^s)^2\lambda_u=1$), one can fix $\alpha^2\in \left[\frac{2}{\lambda_u^2-72},\, 1\right).$ 
To conclude the forward $Dg$--invariance of the unstable cone, we only need to show that
\begin{equation}\label{eq:11}
2\left[ \alpha^2 \left\| \frac{\partial \, \widehat{\psi}_{z}}{\partial w}({w}) \right \|_F^2 + \left\| \frac{\partial \, \widehat{\psi}_{z}}{\partial z}({w})  \right\|_F^2 \right]\leq \alpha^2\cdot\lambda_u^2.
\end{equation}
Indeed, using inequalities 
$$2 \left\|  \frac{\partial \, \widehat{\psi}_{z}}{\partial z}({w})  \right\|_F^2\leq 2\leq \lambda_u^2-72<\lambda_u^2-2\left\| \frac{\partial \, \widehat{\psi}_{z}}{\partial w}({w}) \right \|_F^2,$$
we obtain 
\begin{equation}\label{eq:22}
\frac{2\left\|  \frac{\partial \, \widehat{\psi}_{z}}{\partial z}({w})  \right\|_F^2}{\lambda_u^2-2\left\| \frac{\partial \, \widehat{\psi}_{z}}{\partial w}({w}) \right \|_F^2}<\frac{2}{\lambda_u^2-72}<\alpha^2.
\end{equation}
Thus, from \eqref{eq:22} one can deduce \eqref{eq:11}. So, we conclude  $$\|(u_1,u_2)\|_{\mathbb{R}^2}^2\leq \alpha^2| \lambda_u v_3|^2=\alpha^2u_3^2.$$

\smallskip

Finally, we show that every vector ${v} = (v_1,\, v_2,\, v_3) \in C^u_{\alpha}(q)$ expands in length. Indeed, we have 
\begin{align*}
\|Dg(w, z)({v})\|^2 
&= \big(\nabla \psi_{1} \cdot {v} \big)^2 
+ \big(\nabla \psi_{2} \cdot {v} \big)^2 
+ \big( \lambda_A^u\, v_3 \big)^2 \\
&\geq -\|{v}\|^2 \left( \left\| \frac{\partial \, \widehat{\psi}_{z}}{\partial w}({w}) \right \|_F^2 
+ \left\|  \frac{\partial \, \widehat{\psi}_{z}}{\partial z}({w})  \right\|_F^2 \right) 
+ \big( \lambda_u\, v_3 \big)^2.
\end{align*}

Since the vector ${v}$ satisfies $\|(v_1,\, v_2)\|^2 \leq \alpha^2 (v_3)^2$ and 
$
(v_3)^2 = \|{v}\|^2 - \|(v_1,\, v_2)\|^2
$, then 
$$
\big(\lambda_u v_3 \big)^2 
\geq \frac{(\lambda_u)^2 \|{v}\|^2}{1+\alpha^2}.
$$
Consequently,  
\begin{equation}\label{eq:IneqExp1}
\|D g(w, z)({v})\|^2 \geq \|{v}\|^2 \left( \frac{(\lambda_u)^2}{1+\alpha^2} 
- \left\| \frac{\partial \, \widehat{\psi}_{z}}{\partial w}({w}) \right \|_F^2 
- \left\|  \frac{\partial \, \widehat{\psi}_{z}}{\partial z}({w})  \right\|_F^2  \right).
\end{equation}

Therefore, if we choose $\lambda^2\in \left(1,\, \frac{\lambda_u^2-73}{2}\right]$, then  
$$
2\lambda^2 + 2\left\|  \frac{\partial \, \widehat{\psi}_{z}}{\partial z}({w})  \right\|_F^2
< \lambda_u^2 - 72 
< \lambda_u^2 - 2 \left\| \frac{\partial \, \widehat{\psi}_{z}}{\partial w}({w}) \right \|_F^2.
$$
From the above inequality, and by using that $\alpha \in (0,1)$, we obtain  
\begin{equation}\label{eq:IneqExp2}
\|D g(w, z)({v})\|^2 \geq \lambda^2 \|v\|^2.
\end{equation}
This proves the result. 
\end{proof}

\begin{lemma}\label{robustly}
    Let $g$ be DA diffeomorphism definined  by \eqref{def:g}. Then, $g$ is robustly transitive.
\end{lemma}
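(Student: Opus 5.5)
We follow the strategy of Bonatti--Díaz--Pujals and Mañé for DA maps with one-dimensional strong unstable direction. The goal is to show that every $g'$ in a $C^1$-neighborhood of $g$ has dense unstable leaves. Robust transitivity then follows from the standard fact that a partially hyperbolic diffeomorphism with $\mathcal W^u$ minimal is transitive.

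The first step records the $C^1$-robust properties of $g$ that we will use. By Lemma~\ref{PH} the unstable cone field $\mathcal C^u_\alpha$ is forward invariant and uniformly expanded. These are open conditions, so every $g'$ $C^1$-close to $g$ is partially hyperbolic with a one-dimensional unstable bundle $E^u_{g'}$ tangent to $\mathcal C^u_\alpha$. Since $g'$ is $C^0$-close to $A$, Lemma~\ref{le:TopEstab} gives a semiconjugacy $\pi'$ with $A\circ\pi'=\pi'\circ g'$ and $\mathrm{dist}_{C^0}(\pi',\mathrm{Id})<\delta$. Moreover, $\pi'$ maps unstable leaves of $g'$ into unstable leaves of $A$. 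Indeed, lifting to $\mathbb R^3$, a $g'$-unstable leaf is a curve tangent to $\mathcal C^u_\alpha$, and it stays at bounded distance from $A$-unstable lines after the cone-control. Its $\pi'$-image is therefore contained in an $A$-unstable line. Projecting along $E^s_A$, the image is unbounded in both directions, hence is the whole $A$-unstable line.

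The second step shows density of each unstable leaf $W^u_{g'}(x)$. Since $\pi'(W^u_{g'}(x))=W^u_A(\pi'(x))$, which is dense in $\mathbb T^3$ because $A$ is irreducible, the leaf $W^u_{g'}(x)$ is $2\delta$-dense. To upgrade this to density, fix any open set $U$. We need an open subset $V\subset U$ with the property that every curve tangent to $\mathcal C^u_\alpha$ of length at least $L$ crosses $V$. We would obtain this robustly from the following fact: outside the ball $B(p,r)$, the local $cs$-disks of $g'$ are uniformly contracted. Take $V$ to be a small "unstable box" (a $cs$-disk times an unstable segment) inside $U$ and away from $B(p,r)$, with $r$ chosen much smaller than the box scale and $\delta$. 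The $A$-stable leaves are dense, and $W^s_A$ of a point in $V$ meets every unstable curve of length $L$ by $2\delta$-density. We then transfer this to $g'$: the $cs$-plaques of $g'$ through points of $V$ are $C^0$-close to $A$-stable plaques of large size, because the perturbation only acts on the small region $B(p,r)$. Since $E^s_A$ remains $Dg$-invariant, the $cs$-bundle of $g'$ is close to $E^s_A$, and the plaques persist. Hence every long unstable arc intersects $V$. Consequently every unstable leaf meets every open set, so $\mathcal W^u_{g'}$ is minimal.

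Finally, minimality of $\mathcal W^u_{g'}$ gives transitivity of $g'$. Given open sets $U_1,U_2$, the forward iterates $g'^n(U_1)$ contain unstable arcs of length tending to infinity. By the uniform crossing property of the previous step, these arcs eventually meet $U_2$.

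The main obstacle is the uniform crossing in the second step. Near the horseshoe region, $g'$ is not contracting along $E^{cs}$, so plaques could shrink or fold there. We would handle this by keeping the box $V$ and the relevant stable plaques in $\mathbb T^3\setminus B(p,r)$. There $g'$ is $C^1$-close to $A$, so forward iteration along $cs$ is contracting, much as in Mañé's argument for the DA map. We would also use that $r$ is tiny, so that a positive-measure, dense set of $A$-stable directions avoids the perturbation region.
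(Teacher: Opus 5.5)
Your two outer steps are essentially fine. For $g'$ near $g$, the lift of $\pi'$ carries each $g'$-unstable leaf onto an $A$-unstable line, so unstable leaves are uniformly $2\delta$-dense. The correct reason is that backward-asymptotic points have $\tilde\pi'$-images whose difference has bounded $A^{-n}$-orbit, hence lies in $E^u_A$; "cone-control" is not a reason, since a curve tangent to $\mathcal C^u_\alpha$ can drift linearly away from every unstable line. Uniform minimality of $\mathcal W^u_{g'}$ would then give transitivity.

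The gap is the passage from $2\delta$-density to density.
\begin{itemize}
\item As you state it, the crossing property for all curves tangent to $\mathcal C^u_\alpha$ is false for small $V$. The cone contains rational directions, which give closed cone-curves missing any prescribed small ball.
\item Restricted to $g'$-unstable arcs, your argument only shows that a long arc meets the \emph{large} $cs$-plaque through a point of $V$. That does not place the intersection point in $V$, which contains only a tiny piece of the plaque.
\item Bringing the point into $V$ would need contraction along the $cs$-plaque. But any plaque large enough to be hit by every long arc crosses $B(p,r)$, because the $cs$-leaves are dense, so it cannot be kept where $g'$ contracts.
\item $r$ is fixed by $g$ while $U$ is arbitrary, so you cannot take $r$ smaller than the box scale.
\end{itemize}

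You can see that an essential ingredient is missing because you never use the dynamics inside $B(p,r)$, in particular never use $\lambda_1\lambda_2<1$. Your argument applies verbatim to the same construction with the horseshoe replaced by a planar map with a repelling fixed point at the center and the same derivative bounds. That map is still partially hyperbolic by the proof of Lemma~\ref{PH}, is $C^0$-close and semiconjugate to $A$, and keeps $E^s_A$ invariant. Yet a small neighbourhood $W$ of $p$ satisfies $g^{-1}(\overline W)\subset W$, so $g^2(W)\setminus\overline{g(W)}$ is a wandering open set. That map is not transitive, and its unstable foliation is not minimal.

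The paper's proof uses exactly this missing ingredient. It has $|\det(Dg|_{E^{cs}})|\le\max\{(\lambda_A^s)^2,\lambda_1\lambda_2\}<1$ everywhere, so $g^{-1}$ expands area of $cs$-disks while $g$ stretches $u$-arcs. A small $u$-arc $\gamma$ in one open set and a small $cs$-disk $D$ in the other then have iterates $g^n(\gamma)$ and $g^{-m}(D)$ that intersect. All these properties are $C^1$-open.

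To close your second step you need a mechanism that works below the $2\delta$ scale. One option is growth of backward images of $cs$-disks, as in the paper. Another, within your framework, is a proof that every open set contains an entire fibre of $\pi'$. Since $\pi'$ is closed and $A$-unstable leaves are dense, that would immediately give density of the $g'$-unstable leaves.
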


\begin{proof}
First, we show that $g$ is transitive. For this, notice that by construction it is area-expanding in the past. Indeed, $g\equiv A$ on $\mathbb{T}^3\setminus B(p,r_1)$, which implies that $$\det\vert Dg(x)\vert_{E^{cs}(x)}\vert=(\lambda_A^s)^2<1.$$ 
Moreover, by Remark \ref{specific}, the contraction rate on the central bundle on $B(p,r_1)$ by $Dg$ is bounded by $\lambda_1$, while the expansion rate is bounded by $\lambda_2$. Then, we have $$\det\vert Dg(x)\vert_{E^{cs}(x)}\vert\leq\lambda_1\lambda_2<1.$$ 
On the other hand, by Proposition \ref{PH} one has that $1<\lambda<\lambda_u$. This shows that $g$ expands in area any disk contained in $\mathcal{F}_g^c$ under $g^{-1}$ and $f$ enlarges the length of curves contained in $\mathcal{F}_g^u$.

Let $U, V\subset\mathbb{T}^3$ be non-empty open sets. Choose a small curve  $\gamma\subset \mathcal{F}_g^u$ contained in $U$ and a small disk  $D\subset\mathcal{F}_g^c$ contained in $V$. By the quasi-isometric property of the leaves $\mathcal{F}_g^c$ and $\mathcal{F}_g^u$, there are $m,n\in \mathbb{N}$ such that $g^n(\gamma)\cap g^{-m}(D)\neq\emptyset$, which implies $g^{m+n}(V)\cap U\neq\emptyset$. This shows that $g$ is transitive.  

Finally, by considering a small $C^1$ neighborhood $\mathcal{U}$ of $g$, all the above properties are robust. This proves the result.  
\end{proof}

Next, we present the proof of Proposition \ref{example}. 

\begin{proof}[proof of Proposition \ref{example}]
Set $d_c=\dim(E^{s}_A)=2$, $\tau=h_{top}(A)=\log(\lambda_u)$, and let $\lambda_c=\lambda_2\in(2,3)$ be as in \eqref{choiceec}. From Proposition~\ref{pro:Main2}, we have that
$$
C_1(A)=\exp\left(\frac{\log(\lambda_A^s) + \sqrt{\log(\lambda_A^s) \left[ \log(\lambda_A^s) - 2\log(\lambda_u) \right]}}{2}\right)
$$
is increasing in $\lambda_u$ and decreasing in $\lambda_A^s$ so that its value at the boundary case $\lambda_A^s=1/10$, $\lambda_u=100$ already bounds it from below for every $A$ satisfying the hypotheses of the proposition. A direct computation at this boundary case gives $C_1(A)\approx4.15>3$, so that $ \lambda_2\in[1,C_1(A))$. Moreover, the admissible interval for the contraction rate outside the perturbation ball,
$$
\left(\lambda_A^s,\ \exp\left(\frac{2\log^2(\lambda_2)}{2\log(\lambda_2)-\log(\lambda_u)}\right)\right),
$$
is nonempty at this same boundary case (its upper endpoint stays above $0.37$ throughout $\lambda_2\in(2,3)$), so we may fix $\lambda_s$ in it. Since $g\equiv A$ outside the perturbation ball, $\sup_{x\notin\mathcal{O}}\|Dg(x)|_{E^{cs}_g(x)}\|=\lambda_A^s$, which is compatible with this choice of $\lambda_s$. By Theorem~\ref{teo:MainTheorem}, there exist $r_1,\varepsilon>0$ associated to the pair $(\lambda_2,\lambda_s)$.

Let $g$ be the DA map given by \ref{def:g}, where $0<r<\min\lbrace r_1,\varepsilon\rbrace$.  Since $g$ is a DA map, it follows that it is isotopic to $A$. Moreover, by construction, it is a dynamically coherent diffeomorphism. 

By Lemma \ref{PH} we have that item $g$ is partially-hyperbolic. Moreover, 
if $\lambda=\sqrt{(\frac{\lambda_u^2-73}{2})}>3$, one has
\begin{displaymath}
    m(Dg|_{E^{u}}(y))\geq\lambda>3>\lambda_2\geq\Vert Dg|_{E^{cs}}(x)\Vert,\quad\forall x,y\in\mathbb{T}^3. 
\end{displaymath} 
This shows that 
$$m( Dg|_{E^{u}}(x))\geq\lambda\quad\text{ and }\quad \Vert Dg|_{E^{cs}}(x)\Vert< m(Dg|_{E^{u}}(y)),\quad \forall x,y\in\mathbb{T}^3.$$
Therefore, we have that $g$ is a dynamically coherent partially-hyperbolic $(1,r_1)$-localized deformation of $A$, so that $h_{top}(g)=h_{top}(A)$.

Finally, by item (5) of Lemma \ref{propo:horseshoe} it follows that $g$ has fibers with positive topological entropy, and by Lemma \ref{robustly} we have that $g$ is robustly transitive. This concludes the proof. 
\end{proof}

\section{Proof of Theorem \ref{theorem B}}  
This section is devoted to presenting the proof of Theorem \ref{theorem B}. Let $A\in SL(3,\mathbb{Z})$ with one real eigenvalue $\lambda_u>1$ and a pair of complex conjugate eigenvalues with modulus $0<\rho<1$. As in Section \ref{constructionCarrasco}, we consider a $C^\infty$ Smale horseshoe map $\sigma_0$, but in this case with more than two branches, in such a way that 
\begin{equation}\label{entropygrowth}  
    h_{\text{top}}(\sigma_0) > h_{\text{top}}(A)=\log(\lambda_u).  
\end{equation}  

If we attempted to embed this horseshoe into a small neighborhood around the fixed point of the linear map $A$ as in Section \ref{constructionCarrasco}, the resulting DA map $g$ would not be partially hyperbolic. As described in the aforementioned section, a high-entropy horseshoe requires significant expansion. Specifically, the expansion rate of $\sigma_0$ in the unstable direction, denoted by $\lambda_0^u$, is larger than $\lambda_u$. This discrepancy would violate the domination condition of the splitting $E^c_g\oplus E_g^u$. Therefore, a modification of the construction of $g$ is necessary.  

\subsection{Local perturbation of a linear Anosov automorphism}\label{subsec:LinearAnosov2}

We construct a $C^\infty$-diffeomorphism $f_1:\mathbb T^3\to\mathbb T^3$ isotopic to the linear model $A$, for which the expansion along the one-dimensional unstable direction increased in a small neighborhood of a fixed point of $A$, while its two-dimensional stable bundle, as in the case of A, does not admit a splitting into invariant one-dimensional subbundles.

Let $A\in\mathrm{SL}(3,\mathbb Z)$ be a hyperbolic matrix with a single real eigenvalue $\lambda_u>1$ and a pair of non-real complex conjugate eigenvalues:
$$
\lambda_s=\rho e^{i\theta},
\qquad
\overline{\lambda_s}=\rho e^{-i\theta},
\qquad 0<\rho<1,
$$
such that $A$ has more than one fixed point on $\mathbb T^3$.

Choose a real basis $\mathcal B=\{v_1,v_2,v_3\}$ of $\mathbb R^3$ in
which the matrix of $A$ has the block form
\begin{equation}\label{eq:A-block}
A=
\begin{pmatrix}
R&0\\
0&\lambda_u
\end{pmatrix},
\end{equation}
where $R=\rho R_\theta$ is the real $2\times2$ matrix associated with the eigenvalues $\lambda_s, \overline{\lambda_s}$ and $R_\theta$ is the rotation matrix of angle $\theta$. Providing $\mathbb R^3$, and hence $\mathbb T^3$, with the flat metric
induced by the inner product for which $\mathcal B$ is orthonormal.
Thus,
\begin{equation}\label{eq:R-conformal}
\|Rv\|=\rho\|v\|,
\qquad
\|R^{-1}v\|=\rho^{-1}\|v\|
\qquad\text{for every }v\in\mathbb R^2.
\end{equation}
From now on, we will work in these coordinates.

Let $p\in\mathbb T^3$ be a fixed point of the linear Anosov diffeomorphism induced by $A$, let $\pi:\mathbb R^3\to\mathbb T^3$ be the universal cover map such that $\pi(0)=p$, and let
$P:\mathbb R^3\to\mathbb R^3$ be the linear isomorphism whose columns are
the vectors of $\mathcal B$. Now, we define the map 
$$\Psi:\mathbb{R}^3 \to \mathbb{T}^3 \quad \text{by} \quad  \Psi:=\pi\circ P.$$
Moreover, for every $t\geq 0$, consider the family of neighborhoods of 0 given by
$$\mathcal Q_t=\mathbb D_t\times[-t,t]\subset\mathbb{R}^3.$$
Note that $\Psi(0)=p$, and for $r>0$ sufficiently
small, $\Psi$ is injective on the neighborhood 
$\overline{\mathcal Q_{4r}}$; we then set
$$
U_{4r}:=\Psi(\mathcal Q_{4r}).
$$

Now, we consider a $C^\infty$-bump function $\chi:\mathbb R^3\to[0,1]$ satisfying 
$$
\chi\equiv1\ \ \text{on}\ \ {\mathcal{Q}_1},\qquad \operatorname{supp}\chi\subset\operatorname{int}(\mathcal Q_2),
$$
where $\text{supp h}$ denotes the support of the map $h$. In particular, $\chi\equiv0$ on the complement of $\mathcal{Q}_2$. Define $C_0=\sup_{x\in\mathbb R^3}\|\nabla\chi(x)\|<\infty$. 

Fix $\lambda>\lambda_u$, to be chosen sufficiently close to $\lambda_u$, and set $$L:=\lambda-\lambda_u>0, \qquad 
B:=\begin{pmatrix} R & 0 \\ 0 & \lambda\end{pmatrix},\qquad C:=\begin{pmatrix}0 & 0 \\ 0 & L\end{pmatrix}.$$
For every $t\in [0,1]$, we define a smooth family of maps $F_t:\mathbb R^3\to\mathbb R^3$ by
\begin{equation}\label{eq:definition F}
F_t(x):=Ax+t\chi(x)\,Cx=(1-t\chi(x))Ax+t\chi(x)Bx.
\end{equation}
Writing $x=(w,z)\in\mathbb R^2\times\mathbb R$, $F_t$ has the form 
\begin{equation}\label{eq:F-fiber-form}
F_t(w,z)=\bigl(Rw,\,[\lambda_u+tL\chi(w,z)]z\bigr).
\end{equation}

By construction, $F_1$ coincides with $B$ within $\mathcal{Q}_1$, coincides with $A$ on  $\mathbb{R}^3\setminus\mathcal{Q}_2$, and interpolates between them, increasing only the unstable expansion rate.

Now, for every $r>0$, consider the homothety map $H_r: \mathbb{R}^2\times \mathbb{R} \to \mathbb{R}^2\times \mathbb{R}$ given by $H_r(w,z)=\frac{1}{2r}(w,z)$. Thus, the recaled  one-parameter family of maps $F_{t,r} : \mathbb{R}^3\to \mathbb{R}^3$ is defined by
$$F_{t,r}=H^{-1}_r\circ F_t\circ H_r,$$
where $H^{-1}_r$ denotes the inverse of $H_r$.

Finally, we define the family of deformations $f_t:\mathbb T^3\to\mathbb T^3$ by
\begin{equation}\label{eq:deformation21}
f_t(q)=\begin{cases} A(q), & q\notin U_{4r},\\ \Psi\circ F_{t,r}\circ\Psi^{-1}(q), & q\in U_{4r}. \end{cases}
\end{equation}

Every map $f_t$ is well defined because $F_{t,r}(x)=Ax$ in a neighborhood of
$\partial\mathcal Q_{4r}$ and
$\Psi(Ax)=A(\Psi(x))$. Moreover, in the global trivialization of
$T\mathbb T^3$ induced by $\mathcal B$,
\begin{equation}\label{eq:Df0-DF}
Df_t(q)=DF_t(x),\quad\forall t\in[0,1],
\end{equation}
where $q=\Psi(y)\in U_{4r}$ and $x=H_r(y)$.

Next, we see that for $t=1$ the map $f_1$ is an Anosov diffeomorphism isotopic to $A$ with an indecomposable stable direction. The proof is divided into three steps. First, we show that the local perturbation that defines $f_1$ gives rise to a $C^\infty$-diffeomorphism of $\mathbb T^3$ isotopic to $A$. Second, we establish uniform hyperbolicity using a cone-field argument. Finally, we prove that the resulting two-dimensional stable bundle does not admit a continuous invariant splitting into one-dimensional subbundles.

\begin{lemma}\label{le:Primer_Anosov}
For $\lambda>\lambda_u$ sufficiently close to $\lambda_u$, the map $f_1:\mathbb T^3\to\mathbb T^3$ defined by \eqref{eq:deformation21} is a $C^\infty$-Anosov diffeomorphism isotopic to $A$, with Anosov splitting of the form
$$
T\mathbb T^3=E^s_{f_1}\oplus E^u_{f_1},
\qquad \dim E^s_{f_1}=2,
$$
and $E^s_{f_1}$ does not admit a continuous $Df_1$-invariant splitting into one-dimensional subbundles.
\end{lemma}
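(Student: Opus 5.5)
The proof has three steps: regularity and isotopy, uniform hyperbolicity via cone fields, and indecomposability of the stable bundle.

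\textbf{Diffeomorphism and isotopy.} By \eqref{eq:F-fiber-form}, $F_t(w,z)=(Rw,\,[\lambda_u+tL\chi(w,z)]z)$ is $C^\infty$ and preserves each plane $\{z=\text{const}\}$ in the $w$-coordinate via the invertible linear map $R$. For fixed $w$, the $z$-component $z\mapsto \phi_w(z)=[\lambda_u+tL\chi]z$ has derivative
\begin{displaymath}
\lambda_u+tL\chi+tLz\,\partial_z\chi \;\geq\; \lambda_u-LC_0\sup_{\mathcal Q_2}|z|\;\ge\;\lambda_u-2LC_0 .
\end{displaymath}
This is positive when $\lambda$ is close to $\lambda_u$, i.e.\ $L$ small. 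So each $\phi_w$ is strictly increasing, equals $\lambda_u z$ for $|z|$ large, and is therefore a diffeomorphism of $\mathbb R$. Hence $F_t$ is a diffeomorphism, and so is its rescaling $F_{t,r}$. Since $F_{t,r}=A$ near $\partial\mathcal Q_{4r}$, gluing gives a $C^\infty$ diffeomorphism $f_t$. The path $t\mapsto f_t$, $t\in[0,1]$, is an isotopy from $f_0=A$ to $f_1$.

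\textbf{Hyperbolicity.} By \eqref{eq:Df0-DF}, in the global frame we have
\begin{displaymath}
Df_1=\begin{pmatrix}R&0\\ a(x)^{T}&b(x)\end{pmatrix},\qquad b=\lambda_u+L\chi+Lz\partial_z\chi,\quad a=Lz\nabla_w\chi .
\end{displaymath}
Here $|a|\le 2LC_0$ and $|b-\lambda_u|\le L(1+2C_0)$ (recall $z$ is evaluated at the rescaled point $x=H_r(y)$).

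The $w$-plane $E=\mathbb R^2\times\{0\}$ is exactly $Df_1$-invariant at every point, with $\|Df_1|_E\|=\rho<1$ by \eqref{eq:R-conformal}. So $E^s_{f_1}=E$, a constant bundle.

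For the unstable direction, take the cone $\mathcal C=\{\|v_w\|\le\alpha|v_z|\}$. For $v\in\mathcal C$, the image has $w$-part $Rv_w$ and $z$-part $a\cdot v_w+bv_z$, so
\begin{displaymath}
\|Rv_w\|\le \rho\alpha|v_z|\le \alpha\,|a\cdot v_w+bv_z|
\end{displaymath}
whenever $\rho\le b-|a|\alpha$. This holds for small $L$, since $\rho<1<\lambda_u$. The $z$-component is also expanded by at least $b-|a|\alpha>1$. Hence the cone field is forward invariant and uniformly expanded, and it yields $E^u_{f_1}$. Domination is automatic from the rate gap $\rho<1<b-|a|\alpha$. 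Therefore $f_1$ is Anosov with $\dim E^s_{f_1}=2$.

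\textbf{Indecomposability.} This is the main point. Suppose $E^s_{f_1}=E_1\oplus E_2$ is a continuous $Df_1$-invariant splitting into line fields. At the fixed point $p$, $Df_1(p)|_{E^s}=R=\rho R_\theta$ with $\theta\notin\pi\mathbb Z$, since the eigenvalues are non-real. An invariant splitting at $p$ would require $R$ to preserve a real line, i.e.\ to have a real eigenvector. No such vector exists, which is a contradiction. The same argument works at any fixed point, or along $A$-periodic orbits outside $U_{4r}$, where $Df_1=A$.

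The only potential subtlety is ensuring that $E^s_{f_1}$ really is the constant plane $E$ and not some perturbed bundle. This is already guaranteed by the exact invariance of $E$ noted above together with uniqueness of the stable bundle of an Anosov map.
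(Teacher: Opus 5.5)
\textbf{The hyperbolicity step rests on a false claim.} You assert that the constant plane $E=\mathbb R^2\times\{0\}$ is exactly $Df_1$-invariant. The matrix you wrote yourself gives $Df_1(x)(v_w,0)=(Rv_w,\,a(x)\cdot v_w)$. So $E$ is mapped to $E$ only where $a(x)=Lz\nabla_w\chi(x)$ vanishes. In the transition shell, where $\nabla_w\chi\neq0$ and $z\neq0$, $E$ is tilted out of itself.

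For a lower block-triangular matrix it is the other factor that is exactly invariant: $Df_1(0,v_z)=(0,b\,v_z)$. That is, the vertical line field $E^u_A$ is invariant. Equivalently, $F_1$ sends vertical lines $\{w\}\times\mathbb R$ to vertical lines; it does not send horizontal planes to horizontal planes, contrary to your first paragraph.

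So you have the roles reversed. Your unstable-cone computation is correct but superfluous, since $E^u_{f_1}=E^u_A$ is already invariant and expanded by $b\ge\lambda_u-M>1$. The stable bundle, which is the nontrivial object, is never constructed. "Domination is automatic from the rate gap" has no invariant complement to act on. Your closing appeal to uniqueness of the stable bundle is circular: $E^s_{f_1}$ is genuinely a perturbed, non-constant bundle away from $p$, as the paper's remark after the lemma points out.

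\textbf{What is missing, and how to fix it.} The missing step is exactly the one the paper supplies. Show that the stable cone $\mathcal C^s_\alpha=\{|v_z|\le\alpha\|v_w\|\}$ is mapped into its interior by $Df_1^{-1}=\begin{pmatrix}R^{-1}&0\\-b^{-1}aR^{-1}&b^{-1}\end{pmatrix}$. This uses the bound $(\|a\|+\alpha\rho)/|b|\le(M+\alpha\rho)/(\lambda_u-M)<\alpha$ for $L$ small. Then show that $Df_1^{-1}$ expands vectors in the cone by at least $(\rho\sqrt{1+\alpha^2})^{-1}>1$. The cone criterion then produces $E^s_{f_1}$.

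Alternatively, write $E^s_{f_1}(q)$ as the graph of a linear map $\ell_q:\mathbb R^2\to\mathbb R$. Invariance becomes $\ell_{f_1(q)}R=a(q)+b(q)\ell_q$, a contraction with rate $\rho/\inf b<1$, so it has a unique continuous solution.

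\textbf{What survives.} Your indecomposability conclusion at $p$ holds, but for a different reason than the one you give. Since $\chi\equiv1$ near $0$, you get $a(0)=0$ and $Df_1(p)=\mathrm{diag}(R,\lambda)$. The contracting invariant plane at this fixed point is therefore $E^s_A(p)$, and $R$ has no real eigenvector. Your fiberwise-monotonicity argument that $F_t$ is a global diffeomorphism is sound. It is a legitimate alternative to the paper's local-diffeomorphism-plus-degree argument.
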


\begin{proof} To begin with, let us prove that the map $f_1$ defined in \eqref{eq:deformation21} is a
$C^\infty$-diffeomorphism of $\mathbb T^3$ isotopic to $A$. For this, we first show that $f_t$ is a local diffeomorphism on $\mathbb{T}^3$ for all $t\in [0,1]$, that is, $\det Df_t(q)\neq 0$ for every $q\in\mathbb{T}^3$. 

Indeed, note that the homothety map used in the definition of $F_{t,r}$ does not change the matrix of the derivative in the normalized coordinates. Thus, inside the perturbation region, $Df_t$ is represented by $DF_t$, whereas outside that
region $D f_t=A$. Since in the normalized coordinates, for $t\in[0,1]$, one has
$$
F_t(w,z)= \bigl( Rw,\,[\lambda_u+tL\chi(w,z)]z\bigr), \ \ \text{with} \quad L=\lambda-\lambda_u,
$$
then, differentiation gives
\begin{equation}\label{eq:DF-step1}
DF_t(x)=
\begin{pmatrix}
R & 0\\
tLz\,D_w\chi(x) &
\lambda_u+tL\bigl(\chi(x)+z\,\partial_z\chi(x)\bigr)
\end{pmatrix}.
\end{equation}
Hence, the determinant of $f_t$ has the form
$$
\det Df_t(q)= \det DF_t(x)=\left[
\lambda_u+tL\bigl(\chi(x)+z\,\partial_z\chi(x)\bigr)
\right]\cdot\det (R) .
$$

On the other hand, set $C_0:=\sup_{x\in\mathbb R^3}\|\nabla\chi(x)\|<\infty$ and choose $\lambda>\lambda_u$ sufficiently close
to $\lambda_u$ so that
\begin{equation}\label{eq:L-step1}
M:=L(1+2C_0)<\lambda_u.
\end{equation}

Moreover, since $\operatorname{supp}\chi\subset\mathcal Q_2$, one has $|z|\le 2$ on the support of $\nabla\chi$. Hence, one has  
$$
\left|
\chi(x)+z\,\partial_z\chi(x)
\right|
\le
1+2C_0,$$ so that $$\ 1+2C_0+t[\chi(x)+z\partial_z\chi(x)]\geq 0,\quad\forall t\in[0,1]. 
$$
By combining the fact above with inequality \eqref{eq:L-step1}, we obtain for every $t\in[0,1]$ and every $x\in\mathbb R^3$, that
$$
\lambda_u+tL\bigl(\chi(x)+z\,\partial_z\chi(x)\bigr) \ge \lambda_u-M >0.
$$
It follows from the estimate above that $\det Df_t(q)>0$, for every $t\in[0,1]$ and all $q\in \mathbb{T}^3$. Therefore, each $f_t$ is a local diffeomorphism.

Now, the map
$$
[0,1]\times\mathbb T^3\longrightarrow\mathbb T^3,
\qquad
(t,q)\longmapsto f_t(q),
$$
is continuous, with $f_0=A$.
Hence, each $f_t$ is homotopic to $A$, and therefore
$$
\deg(f_t)=\deg(A)=1.
$$
Since $\mathbb T^3$ is compact and connected, every local diffeomorphism
$f_t:\mathbb T^3\to\mathbb T^3$ is a finite covering map. Because
$\det Df_t>0$, the number of sheets of this covering equals its degree.
Thus, each $f_t$ has exactly one sheet and is therefore a global
$C^\infty$ diffeomorphism. Consequently, $(f_t)_{t\in[0,1]}$ is an isotopy from $A$ to $f_1$.

We now prove that $f_1$ is Anosov. For
$x=(w,z)\in\mathbb R^2\times\mathbb R$, set
\begin{equation}\label{eq:a-b-step2}
a(x):=Lz\,D_w\chi(x),
\qquad
b(x):=\lambda_u+L\bigl(\chi(x)+z\,\partial_z\chi(x)\bigr).
\end{equation}

Then, by identity \eqref{eq:DF-step1}, the derivative of
$f_1$ is represented in the normalized coordinates by
$$
DF_1(x)=
\begin{pmatrix}
R & 0\\
a(x) & b(x)
\end{pmatrix}.
$$

Note that the unstable subbundle $E^u_A$ remains $Df_1$-invariant. In fact, for every $v^u\in E_A^u$, the triangular form above gives
$$
DF_1(x)(0,v^u)=(0,b(x)v^u).
$$
While outside the perturbation region, one has $Df_1=A$. Therefore, for every $q\in\mathbb T^3$
$$
Df_1(q)E_A^u(q)=E_A^u(f_1(q)).
$$
Thus, $E_A^u$ is globally $Df_1$-invariant. In contrast, the off-diagonal term $a(x)$ shows that $E_A^s$ is not, in general, $Df_1$-invariant in the perturbation region.

Before constructing a $Df_1$-invariant stable subbundle, we show that the vectors in $E^u_{f_1}:=E^u_A$ are uniformly expanded by $Df_1$.  

We first estimate some uniform bounds for the nonconstant entries $a(x)$ and $b(x)$. More precisely, we claim that
\begin{equation}\label{eq:a-b-bounds-step2}
\|a(x)\|\le M,
\qquad
|b(x)-\lambda_u|\le M,\quad\forall x\in\mathbb{R}^3. 
\end{equation}
Indeed, by definition of $C_0$, 
$$
\|a(x)\|\le2LC_0\le M,
$$
while
$$
|b(x)-\lambda_u| \le L\bigl(|\chi(x)|+|z|\,|\partial_z\chi(x)|\bigr)\le L(1+2C_0)=M.
$$

Now, we prove that the subbundle $E^u_{f_1}$ is uniformly expanded. To his end, fix 
\begin{equation}\label{eq:alpha-choice-step2}
0<\alpha<
\min\left\{
1,\sqrt{\rho^{-2}-1}
\right\}.
\end{equation}
Since $C_0$ is a constant fixed, $M\to0$ as $\lambda\to\lambda_u^+$. Thus, we may assume that
\begin{equation}\label{eq:M-choice-step2}
0<M<
\min\left\{
\lambda_u-1,\,
\frac{\alpha(\lambda_u-\rho)}{1+\alpha}
\right\}.
\end{equation}
Let $q\in \mathbb{T}^3$ and $v^u\in E_{f_1}^u(q)$. In the perturbation region, we have by inequality \eqref{eq:a-b-bounds-step2} that
$$
\|Df_1(q)v^u\|=\vert b(x)\vert\|v^u\|
\ge
(\lambda_u-M)\|v^u\|, 
$$ 
and the choice of $M$ ensures that $\lambda_u-M>1$. Since $f_1=A$ outside the perturbation region, it follows that 
$$
\|Df_1(q)v^u\|=\lambda_u\|v^u\|.
$$
Therefore, $Df_1$ expands uniformly on $E_{f_1}^u(q)$ for every $q\in\mathbb{T}^3$.  

Now, we prove the existence of a stable sub-bundle. Let us consider the stable cone field associated with the fixed splitting
$$
T\mathbb T^3=E_A^s\oplus E_A^u.
$$
For each $q\in\mathbb T^3$, define
$$
\mathcal C_\alpha^s(q)
:=
\left\{
(v^s,v^u)\in E_A^s(q)\oplus E_A^u(q):
|v^u|\le\alpha\|v^s\|
\right\}.
$$
We claim that this cone field is strictly invariant under $Df_1^{-1}$: for every $q\in\mathbb T^3$ and every nonzero vector
$$
w=(w^s,w^u)\in\mathcal C_\alpha^s(f_1(q)) \ \implies \ [Df_1(q)]^{-1}w
\in
\operatorname{int}\mathcal C_\alpha^s(q).
$$

We set 
$(\widetilde{w}^s,\,\widetilde{w}^u):=[Df_1(q)]^{-1}(w)$. Since the inverse matrix of $Df_1(q)$ is given by $$[Df_1(q)]^{-1}=\begin{pmatrix} R^{-1} & 0 \\ -b^{-1}aR^{-1} & b^{-1}\end{pmatrix},$$
then, 
$$
\widetilde{w}^s=R^{-1}(w^s),\qquad \widetilde{w}^u=-b^{-1}aR^{-1}(w^s) + b^{-1}w^u.
$$
In particular, $$[Df_1(q)]^{-1}w \in \operatorname{int}\mathcal C_\alpha^s(q)\Leftrightarrow\frac{|\widetilde{w}^u|}{\|\widetilde{w}^s\|}\leq\alpha.$$

A straightforward computation yields $\|\widetilde{w}^s\|=\|R^{-1}(w^s)\|=\rho^{-1}\|w^s\|$. Thus, since $|w^u|\le\alpha\|w^s\|$,
$$
|\widetilde{w}^u|\le |b|^{-1}\big(\|a\|\,\|R^{-1}(w^s)\| + |w^u|\big)
\le |b|^{-1}\big(\|a\|\rho^{-1}\|w^s\| + \alpha\|w^s\|\big).
$$
Therefore,
 \begin{equation}\label{eq:Deformation2_2}
\frac{|\widetilde{w}^u|}{\|\widetilde{w}^s\|}\le \frac{|b|^{-1}\|w^s\|\big(\|a\|\rho^{-1} + \alpha)}{\rho^{-1}\|w^s\|}=\frac{\|a\| + \alpha\rho}{|b|}.
\end{equation}
Then, by inequality \eqref{eq:a-b-bounds-step2} and the choice of $M$ in \eqref{eq:M-choice-step2}, we have that 
\begin{equation*}
\frac{|\widetilde{w}^u|}{\|\widetilde{w}^s\|}\le \frac{M+\alpha\,\rho}{\lambda_u-M}<\alpha.
\end{equation*}
This shows that the backward invariance of the stable cone field $C^s_{\alpha}$. 

We next prove uniform contraction of the longitude of vectors on the stable cone by $Df_1$. Let $q \in \mathbb{T}^3$ and $w = (w^s, w^u) \in C_\alpha^s(q)$. We give the following estimate:
\begin{align*}
\|[Df_1(q)]^{-1} w\|^2 &= \|\tilde{w}^s\|^2 + |\tilde{w}^u|^2 \\
&\ge \|\tilde{w}^s\|^2.
\end{align*}
Since $w \in C_\alpha^s(q)$, $$\|w\|^2 = \|w^s\|^2 + \|w^u\|^2 \le (1 + \alpha^2) \|w^s\|^2.$$ Additionally, $\|\tilde{w}^s\|^2 = \rho^{-2} \|w^s\|^2$. Then, by combining inequalities above, it follows that:
$$
\|[Df_1(q)]^{-1} w\|^2 \ge \frac{\rho^{-2}}{1 + \alpha^2} \|w\|^2 \implies \|[Df_1(q)]^{-1} w\| \ge \frac{1}{\rho \sqrt{1 + \alpha^2}} \|w\|
$$
So, by the choice of $\alpha$ in \eqref{eq:alpha-choice-step2}, we deduce that
$$
\frac{1}{\rho \sqrt{1 + \alpha^2}} > 1.
$$
Therefore, every vector in $C_\alpha^s$ is uniformly contracted under $Df_1$.

In this way, by the cone criterion applied to $f_1^{-1}$, there exists a continuous
two-dimensional $Df_1$-invariant subbundle $E_{f_1}^s$ satisfying
$$
E_{f_1}^s(q)
\subset
\mathcal C_\alpha^s(q),
\qquad
\forall q\in\mathbb T^3.
$$
Moreover, by construction, $E^u_{f_1}(q):=E_A^u(q)$ is a one-dimensional subspace transverse to $ C_\alpha^s(q)$. In particular, for every $q\in\mathbb T^3$ one has
$$
E_{f_1}^s(q)\cap E_{f_1}^u(q)=\{0\}.
$$
Thus, we obtain a continuous $Df_1$-invariant splitting
$$
T\mathbb T^3=E_{f_1}^s\oplus E^u_{f_1},
$$ 
where $E_{f_1}^s$ is a two-dimensional sub-bundle uniformly contracted and
$E_{f_1}^{u}$ is a one-dimensional sub-bundle uniformly expanded. Therefore, $f_1$ is an Anosov diffeomorphism.

Finally, we prove that $E^s_{f_1}$ does not admit a splitting into one-dimensional subbundles. For this, is enough to show that $E^s_{f_1}(p)$ is precisely $E^s_A(p)$.  Indeed, recall that the local perturbation fixes the origin, that is, $f_1(p)=p$. Moreover, $\chi\equiv1$ in a neighborhood of the origin, hence
$D_w\chi(0)=0$ and $\partial_z\chi(0)=0$. Thus, the derivative at $p$ is
$$
Df_1(p)=
\begin{pmatrix}
R & 0\\
0 & \lambda
\end{pmatrix}.
$$
Hence, $$
Df_1(p)|_{E_{A}^s(p)}=R=\rho R_\theta.
$$
Thus, since the cone field $\mathcal{C}_{\alpha}^s$ is $Df_1^{-1}$-invariant, we have that both two-dimensional spaces coincide, i.e.,
$$
E_{f_1}^s(p)=E_A^s(p).
$$
In particular,
$$
Df_1(p)|_{E_{f_1}^s(p)}=R,
$$
 and by the assumption that $R$ is a matrix generated by a pair of non-real complex eigenvalues, then $Df_1(p)|_{E_{f_1}^s(p)}$ has no nontrivial real one-dimensional invariant subspaces.  
\end{proof}

\begin{remark}
We have the following remaerks: 
\begin{itemize}
    \item By construction of $f_1$, the derivative $Df_1$ is lower block-triangular with diagonal blocks $R$ and $b(x)$, and $R$ is exactly the linear map $A|_{E^s_A}$. However, $E^s_A$ itself is \emph{not} $Df_1$-invariant on $B(p,4r)$ because $a(x)$ is generally nonzero there. So, the Anosov splitting $E^s_{f_1}\oplus E^u_{f_1}$ obtained from Lemma~\ref{le:Primer_Anosov} is a nonlinear perturbation of $E^s_A\oplus E^u_A$, tangent to $E^s_A$ at $p$. 
    \item By construction of $f_1$, one has the stable leaf $\mathcal{W}_{f_1}^s(p)$ coincides with that of $A$, i.e., $\mathcal{W}_{f_1}^s(p)=\mathcal{W}_{A}^s$. 
\end{itemize}
\end{remark}

In this way, since $f_1$ and $A$ are isotopic, by classical results of Franks and Manning (see \cite{Franks70,Manning74}) on Anosov diffeomorphisms on the torus and Lemma~\ref{le:Primer_Anosov} we obtain the following fact:
\begin{lemma}
The Anosov diffeomorphism $f_1:\mathbb T^3\to\mathbb T^3$ defined in \eqref{eq:deformation21} belongs to the isotopy class of $A$. In particular, the topological entropies are preserved, that is, $h_{\mathrm{top}}(f_1)=h_{\mathrm{top}}(A)$.
\end{lemma}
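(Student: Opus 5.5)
The first claim, that $f_1$ lies in the isotopy class of $A$, is already established in the proof of Lemma~\ref{le:Primer_Anosov}. There the family $(f_t)_{t\in[0,1]}$ of \eqref{eq:deformation21} was shown to consist of $C^\infty$ diffeomorphisms depending continuously on $t$, with $f_0=A$. So I will simply recall that $t\mapsto f_t$ is an isotopy from $A$ to $f_1$. The substance of the statement is therefore the entropy equality, and my plan is to obtain it from a topological conjugacy between $f_1$ and $A$.

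I will proceed in three steps. First, I note that $f_1$ is an Anosov diffeomorphism of $\mathbb T^3$ (Lemma~\ref{le:Primer_Anosov}), and that its action on $H_1(\mathbb T^3,\mathbb Z)=\mathbb Z^3$ is $A_*$, because $f_1$ is homotopic to $A$. Second, I invoke Franks' theorem \cite{Franks70}. Since $A$ is hyperbolic, $f_1$ is semiconjugate to $A$ by a map $\pi_{f_1}$ homotopic to the identity with $A\circ\pi_{f_1}=\pi_{f_1}\circ f_1$. Manning \cite{Manning74} upgrades this: every Anosov diffeomorphism of a torus is topologically conjugate to the hyperbolic toral automorphism it induces on homology. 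Hence $\pi_{f_1}$ is a homeomorphism. Third, topological entropy is a conjugacy invariant, so $h_{\mathrm{top}}(f_1)=h_{\mathrm{top}}(A)=\log\lambda_u$.

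An alternative route avoids the full conjugacy statement. Semiconjugacy alone gives $h_{\mathrm{top}}(f_1)\ge h_{\mathrm{top}}(A)$. For the reverse inequality, the fibers of $\pi_{f_1}$ are single points, because an Anosov diffeomorphism is expansive and $\pi_{f_1}$ is at bounded distance from the identity on the lift. Two points in the same fiber would have lifted orbits staying at bounded distance forever, and expansivity on the universal cover (uniform hyperbolicity of the lifted map) then forces them to coincide. Bowen's inequality then yields $h_{\mathrm{top}}(f_1)\le h_{\mathrm{top}}(A)$.

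The only delicate point is justifying injectivity of $\pi_{f_1}$ on the universal cover, that is, global expansivity of the lift $\widetilde f_1$. If I wish to avoid quoting Manning, I would argue this directly. The lifted stable and unstable foliations of $\widetilde f_1$ are quasi-isometric and have global product structure, which follows from the cone estimates of Lemma~\ref{le:Primer_Anosov}, valid uniformly on $\mathbb R^3$ since the perturbation is periodic. So two distinct points on the same stable or unstable leaf separate linearly under forward or backward iteration, contradicting bounded orbit distance. I expect to simply cite \cite{Franks70,Manning74} for this, as the paper does.
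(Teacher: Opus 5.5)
Your proposal is correct and follows the paper's route: the isotopy is the family $(f_t)_{t\in[0,1]}$ from the proof of Lemma~\ref{le:Primer_Anosov}, and the entropy equality comes from the Franks--Manning topological conjugacy between the Anosov map $f_1$ and its linear part $A$, which is exactly what the paper cites. Your alternative via injectivity of $\pi_{f_1}$ also works here, because $E^u_{f_1}=E^u_A$ is constant and the stable leaves are graphs over $E^s_A$, so global product structure is immediate; note, though, that once the fibers are points, $\pi_{f_1}$ is already a homeomorphism and Bowen's inequality is not needed.
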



Now, we construct a derived-from-Anosov diffeomorphism $f$ satisfying the conclusions of Theorem \ref{theorem B}, that is, with topological entropy strictly larger than that of $f_1$, and so strictly larger than that of $A$. The construction is inspired by the example introduced in~\cite{CLPV} and follows the same general strategy developed in Section~\ref{sec:DA_Carrasco}.

More precisely, starting from the fixed point $p$ of $f_1$, we perform an isotopic deformation of $f_1$ inside a sufficiently small open region $U_{r_1}\subset U_{4r}$, where $0<r_1<4r$. The resulting diffeomorphism $f$ satisfies the following properties:
 
\begin{enumerate}
    \item[{[M1]}] $f$ is partially hyperbolic, admitting a splitting $T\mathbb{T}^3=E^{cs}_{f}\oplus E^u_{f}$, where $\text{dim}(E^u_f)=1$ and the two-dimensional invariant sub-bundle $E^{cs}_{f}$ does not admit a splitting into one-dimensional sub-bundles.
    \item [{[M2]}] The topological entropy satisfies $$h_{\text{top}}(f)>h_{\text{top}}(A).$$
     \item [{[M3]}]  {$f$ is robustly transitive.} 
\end{enumerate}

We proceed to describe the construction of $f$. Let $A: \mathbb{T}^3\to \mathbb{T}^3$ be the linear Anosov diffeomorphism introduced in Section \ref{subsec:LinearAnosov2}. Choose a $C^{\infty}$-horseshoe map $\sigma_0:\mathbb{R}^2\to \mathbb{R}^2$ such that $h_{\text{top}}(\sigma_0)>\log(\lambda_u)$. We denote by $H_0\subset \mathbb{R}^2$ the hyperbolic Cantor set associated with $\sigma_0$ . In this case, $$h_{\text{top}}(\sigma_0)=h_{\text{top}}\left(\sigma_0\mid_{H_0}\right).$$ Take  $\lambda>\lambda_u$ such that $\log(\lambda_u)<h_{\text{top}}(\sigma_0)<\log(\lambda)$. 

Let $4r>0$ be the radius of the support of the deformation defining $f_1$ in \eqref{eq:deformation21}. Fix $r_1 \in (0,\, 4r)$ and consider a one-parameter family of diffeomorphisms $\psi_t: \mathbb{R}^{2}\to \mathbb{R}^{2}$, $t\in[0,\,1]$ such that
\begin{enumerate}
\item  $\psi_0\vert_{\mathbb{D}_r}$ exhibits a Smale horseshoe with topological entropy $h_{\text{top}}(\sigma_0)$.
\item for every $t\in [0,\,1]$, $\psi_t(0)=0$ and $\psi_t({\mathbb{D}_{2r}})$ is contained in ${\mathbb{D}_{r}}$,
\item for every $t\in [0,\,1]$, $\psi_t$ coincides with  $F_1(\cdot, 0)$ on ${\mathbb{D}_{2r}}\setminus \mathbb{D}_{r}$,
\item $\psi_1$ coincides with $F_1$,
\end{enumerate}

As in Section \ref{sec:DA_Carrasco} and identity \eqref{eq:defpert}, we construct a diffeomorphism $G:\mathbb{R}^3\to \mathbb{R}^3$ that extends $\psi_0$ and coincides with $F_1$ outside $\mathcal Q_{2r}$. This defines a diffeomorphism $f:\mathbb T^3\to\mathbb T^3$ by
\begin{equation} \label{def:g_0}
f(q)=\begin{cases} A(q), & q\notin U_{4r},\\ \Psi\circ G\circ\Psi^{-1}(q), & q\in U_{4r}, \end{cases}
\end{equation}
where $\Psi$ is the chart considered in the construction of $f_1$. Since $G=F_1$ outside $\mathcal Q_{2r}\subset\mathcal Q_{4r}$, $f$ matches $A$ smoothly across $\partial U_{4r}$, as in \eqref{eq:deformation21}.

Next, we proceed to prove Theorem \ref{theorem B}. 

\begin{proof}[proof of Theorem \ref{theorem B}]
Let $f:\mathbb{T}^3\to\mathbb{T}^3$ given in \eqref{def:g_0}. By construction, it is clear that $f$ is isotopic to $f_1$, and so isotopic to $A$. The partial hyperbolicity of $f$ follows the same argument as in Lemma \ref{PH}. 

Since $A$ has more than one fixed point (Section~\ref{subsec:LinearAnosov2}) and the perturbation region defining $f$ can be taken arbitrarily small, some fixed point $p'\ne p$ of $A$ lies outside of $U_{4r}$, so $f\equiv A$ on a neighborhood of $p'$; in particular, $f(p')=A(p')=p'$, $E^{cs}_f(p')=E^s_A$, and $Df(p')|_{E^{cs}_f(p')}=R$, which has no real invariant lines. This shows property [M1].

Since the definition of $\sigma_0$ is analogous to that given in Section~\ref{sec:DA_Carrasco}, the robust transitivity of $f$ follows from the same argument given in Proposition \ref{robustly}. So, [M3] follows. 

Finally, we show that $h_{\text{top}}(f)>h_{\text{top}}(A)$. Let $\widehat H_0\subset \mathbb{T}^3$ denote the image of $H_0$ under the chart used to define $f$, so that $\widehat H_0$ is a compact $f$-invariant set and $f|_{\widehat H_0}$ is conjugate to $\sigma_0|_{H_0}$. Thus, by monotonicity of the topological entropy we have
$$h_{\text{top}}(f)\geq h_{\text{top}}\bigl(f|_{\widehat H_0}\bigr)=h_{\text{top}}(\sigma_0|_{H_0})=h_{\text{top}}(\sigma_0)>\log(\lambda_u)=h_{\text{top}}(A).$$ Therefore, [M2] is obtained. This concludes the proof.
\end{proof}

\section{Proof of Theorem~\ref{coroequilibrium} and Theorem~\ref{theorem D}}

The main step in the proof of Theorem~\ref{coroequilibrium} and Theorem~\ref{theorem D} is a measure-theoretic injectivity property of the semiconjugacy $\pi$ between $g$ and $A$, in the spirit of~\cite[\S6.3]{BF}: $\pi$ is injective at $\nu$-almost every point, for every ergodic $\nu\in\mathrm{Prob}(g)$ whose projection $\pi_*\nu$ has entropy larger than $h_0$. Let us consider the measurable sets
$$
M':=\{x\in \mathbb{T}^d:\ \pi^{-1}(\pi(x))=\{x\}\}\quad\textit{ and }\quad M'':=\pi(M').
$$

\begin{lemma}\label{lem:pi-injective}
Let $\eta, h_0$ and $\mathcal O_g$ be as in the proof of Proposition~\ref{pro:Main2}, via Proposition~\ref{cor:Main1}. If $\nu\in\mathrm{Prob}_{\mathrm{erg}}(g)$ satisfies $\pi_*\nu\in\mathrm{Prob}_{\mathrm{erg}}^{h_0}(A)$, then $\nu(M')=1$.
\end{lemma}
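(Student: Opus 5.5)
The plan is to combine two facts already established: the fibers of $\pi$ are small compact sets inside center leaves, and $\nu$-typical points have strictly negative center exponents. Together these should force every typical fiber to collapse to a point.

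First, since $\pi_*\nu\in\mathrm{Prob}^{h_0}_{\mathrm{erg}}(A)$, Corollary~\ref{cor:negative-exponents} gives that all $\nu$-center Lyapunov exponents are negative. Since $E^s_g$ is uniformly contracting, all Lyapunov exponents of $\nu$ along $E^{cs}_g=E^s_g\oplus E^c_g$ are then $\le -\chi<0$ for some $\chi>0$. Second, the argument in the proof of Lemma~\ref{le:keylemma} (with Remark~\ref{robustcoherence}) shows that for every $x$ the fiber $F(x):=\pi^{-1}(\pi(x))$ lies in the center leaf $\mathcal W^c_g(x)\subset\mathcal W^{cs}_g(x)$ and has diameter at most $2\delta$. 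Moreover $g^n(F(x))=F(g^n x)$ for all $n\in\mathbb Z$, so $\operatorname{diam} g^n(F(x))\le 2\delta$ for every $n\in\mathbb Z$. Third, the set $M'$ is $g$-invariant, because $g$ maps fibers bijectively onto fibers. By ergodicity $\nu(M')\in\{0,1\}$, so it suffices to rule out $\nu(M')=0$.

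For this I would work with $g^{-1}$. With respect to $g^{-1}$, all exponents of $\nu$ along $E^{cs}_g$ are $\ge\chi>0$, so Pesin theory (Lyapunov charts along the backward orbit) shows that $g^{-1}$ expands $cs$-distances near $g^{-n}x$ at rate at least $e^{\chi-\epsilon}$, up to a tempered constant. I would fix a Pesin block $K_\ell$ with $\nu(K_\ell)>0$, on which the chart sizes are at least $\rho_\ell$ and the constants are uniform. Take $x\in K_\ell$ with $x\notin M'$, and pick $y\in F(x)\setminus\{x\}$ inside the same local $cs$-plaque (the plaque is a $(d_s+d_c)$-dimensional disc in $\mathcal W^{cs}_g(x)$ containing $F(x)$). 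As long as $g^{-n}y$ stays inside the Lyapunov chart at $g^{-n}x$, the distance $d(g^{-n}x,g^{-n}y)$ grows like $C_\ell^{-1}e^{(\chi-\epsilon)n}d(x,y)$. This contradicts the uniform bound $2\delta$ once $n$ is large, provided the orbit never leaves the charts.

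The main obstacle is exactly this proviso. The bound $2\delta$ on fiber diameters is fixed in advance by Lemma~\ref{le:TopEstab}, while the Pesin chart sizes $\rho_\ell e^{-\epsilon n}$ are not uniformly larger than $2\delta$. I would handle it in two steps.
\begin{enumerate}
\item Shrink $\delta$ (hence $\varepsilon$) in Proposition~\ref{cor:Main1}, which only improves the non-concentration estimate, so that $2\delta$ is below the scale on which the domination of $E^{cs}_g$ by $E^u_g$ keeps $cs$-plaques uniformly embedded.
\item Use Poincaré recurrence to $K_\ell$ under $g^{-1}$. At each return time $n_k$, the pair $g^{-n_k}x,g^{-n_k}y$ lies in a single chart of size $\rho_\ell$. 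A cone/graph-transform argument then shows that a nonzero $cs$-separation grows by a definite factor between consecutive returns, which is impossible for a separation bounded by $2\delta$.
\end{enumerate}
If this direct chart argument turns out to be too delicate, I would instead follow \cite[\S6.3]{BF}. There one uses a measurable selection of $y(x)\in F(x)$ with $y(x)\ne x$ to build a second invariant measure $\nu'\ne\nu$ with $\pi_*\nu'=\pi_*\nu$. The same negative-exponent contraction, now applied forward in time inside $\mathcal W^{cs}_g$, shows that $\nu$-typical and $\nu'$-typical points in a common fiber are asymptotic, so $\nu=\nu'$, a contradiction. Either route yields $\nu(M')=1$.
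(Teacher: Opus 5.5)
There is a genuine gap. Your argument uses the hypothesis $\pi_*\nu\in\mathrm{Prob}^{h_0}_{\mathrm{erg}}(A)$ only through the sign of the center exponents (Corollary~\ref{cor:negative-exponents}), and that information is too weak. Exponents control $Dg$ infinitesimally along the orbit of $x$. The point $y\in\pi^{-1}(\pi(x))$, however, may sit at any distance up to $2\delta$, and $\delta$ is fixed before $g$ and $\nu$.

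Concretely, you claim that at the return times $n_k$ the pair $g^{-n_k}x,\,g^{-n_k}y$ lies in one chart of size $\rho_\ell$. This is unjustified. The scale $\rho_\ell$ depends on $g$ and $\nu$ and may be much smaller than $2\delta$, so shrinking $\delta$ beforehand does not help. Once $g^{-n}y$ leaves the chart, nothing in your argument prevents it from staying within $2\delta$ of $g^{-n}x$ forever. Moreover, $g$ is only $C^1$, so classical Pesin charts are not available; the $C^1$ substitutes under domination are again only local, at a measurable scale.

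In fact the implication you need is false in this very class. Take, e.g., $\mathbb T^3$ with one-dimensional center and a Ma\~n\'e-type localized deformation that makes a fixed point $p$ center-repelling. This produces center-attracting fixed points $q_1\neq q_2$ on $\mathcal W^c_g(p)$ with $\pi(q_1)=\pi(q_2)=\pi(p)$. Then $\nu=\delta_{q_1}$ has all $E^{cs}_g$-exponents negative, yet $\nu(M')=0$. The same example defeats your fallback route: $\delta_{q_1}\neq\delta_{q_2}$ both have negative exponents and the same projection, but $q_1,q_2$ are not asymptotic.

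What is missing is control of $Dg$ along the whole center arc joining $g^{-j}x$ to $g^{-j}y$, not only along the orbit of $x$. The entropy hypothesis must enter through non-concentration rather than through exponents. The paper uses $\nu(\mathcal O_g)<\eta$ (Proposition~\ref{prop:Main1}) together with the \emph{pointwise} bounds: $\|Dg|_{E^c_g}\|<\lambda_s<1$ at every point off $\mathcal O_g$, and $<\lambda_c$ on $\mathcal O_g$. The argument then runs as follows.
\begin{enumerate}
\item By $m(Dg^{-1}(z)|_{E^c_g(z)})=\|Dg(g^{-1}z)|_{E^c_g}\|^{-1}$ and the uniform embedding of center leaves at scale $\rho_1>2\delta$, integrating along a minimizing center arc gives $d^c(g^{-n}x,g^{-n}y)\ge\prod_{j=1}^n c(g^{-j}x)\,d^c(x,y)$ as long as $d^c\le\rho_0$.
\item Birkhoff's theorem for $g^{-1}$ gives $\frac1n\sum_{j=1}^n\log c(g^{-j}x)\to\kappa>0$.
\item Hence $d^c$ eventually exceeds $\rho_0$, which contradicts the fiber bound $2\delta<\rho_1$.
\end{enumerate}
This is purely $C^1$ and needs no Pesin theory, because the derivative bounds hold on whole regions, not only along the orbit.

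For full rigor, these bounds must hold on the $\rho_0$-neighbourhood of each orbit point. This means working with $\mathcal O_g$ thickened by $\rho_0$, which still has small $\nu$-measure by Proposition~\ref{prop:Main1} with slightly larger balls. Your reduction to $\nu(M')\in\{0,1\}$ via invariance of $M'$ is correct but not needed.
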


\begin{proof}
First, by the proof of Lemma \ref{le:keylemma} one has  $$\pi^{-1}(\pi(x))\subset\mathcal W^c_g(x),\quad x\in\mathbb T^d.$$ 

Now, since $g\in\mathrm{PH}^c_A(\mathbb T^d)$ is dynamically coherent~\cite{FPS14}, the leaves of $\mathcal W^c_g$ are $C^1$ and uniformly embedded on $\mathbb T^d$. Thus, there exist constants \(0<\rho_1<\rho_0\), uniform over \(g\in\mathcal U\), such that if $x,y$ lie on the same central leaf and $d(x,y)<\rho_1$, then $d^{c}(x,y)\le\rho_0$, where $d^{c}$ is the intrinsic distance along the central leaf. Choose \(0<\delta<\rho_1/2\) in the application of Lemma~\ref{le:TopEstab} producing \(\pi\). Notice that this only requires shrinking $\varepsilon$, without affecting $\lambda_c,\lambda_s,\eta$.

Since $\pi_*\nu\in\mathrm{Prob}_{\mathrm{erg}}^{ h_0}(A)$, by Proposition~\ref{prop:Main1} one obtains $\nu(B(p_i, r))<\eta/N$ for each of the $N$ centers $p_i$ of $\mathcal O_g$. Hence,
$$
\nu(\mathcal O_g)\le\sum_{i=1}^N\nu\bigl(B(p_i, r)\bigr)<\eta.
$$
On the other hand, since $E^{c}_g$ is $Dg$-invariant, for $z=g(w)$ one has
\begin{equation}\label{eq:conorm-dual}
m\bigl(Dg^{-1}(z)|_{E^{c}_g(z)}\bigr)=\bigl\|Dg(w)|_{E^{c}_g(w)}\bigr\|^{-1}.
\end{equation}
By hypothesis $\|Dg(w)|_{E^{c}_g(w)}\|<\lambda_c$ if $w\in\mathcal O_g$, and $<\lambda_s$ if $w\notin\mathcal O_g$; hence \eqref{eq:conorm-dual} gives $m(Dg^{-1}(z)|_{E^{c}_g(z)})>\lambda_c^{-1}$ and $>\lambda_s^{-1}>1$, respectively.

Let $x$ be a $\nu$-typical point and suppose $y\in\pi^{-1}(\pi(x))$, with $y\ne x$. Then, $y\in\mathcal W^{c}_g(x)$, and since $d(x,y)<2\delta<\rho_1$, we have by the choice of $\delta$ that $d^{c}(x,y)\le\rho_0$. Set $z_j:=g^{-j}(x)$, $w_j:=g^{-j}(y)$ for $j\ge0$. As long as $d^{c}(z_j,w_j)\le\rho_0$ for $j=0,\dots,n-1$, applying the fundamental theorem of calculus along a minimizing arc in the leaf together with \eqref{eq:conorm-dual} yields
$$
d^{c}(z_n,w_n)\;\ge\;\Bigl(\prod_{j=1}^{n} c(z_j)\Bigr)\,d^{c}(x,y),
\qquad
c(z)=\begin{cases}\lambda_c^{-1}, & z\in\mathcal O_g,\\ \lambda_s^{-1}, & z\notin\mathcal O_g.\end{cases}
$$
Since $\nu(\mathcal O_g)<\eta$, and $\eta$ was chosen in Lemma~\ref{le:Aux} so that $\eta\log\lambda_c+(1-\eta)\log\lambda_s<0$, and $t\mapsto t\log\lambda_c+(1-t)\log\lambda_s$ is increasing, the condition $\nu(\mathcal O_g)<\eta$ already implies
$$
\kappa:=-\bigl[\nu(\mathcal O_g)\log\lambda_c+(1-\nu(\mathcal O_g))\log\lambda_s\bigr]>0.
$$
By Birkhoff's ergodic theorem applied to $g^{-1}$, one has for $\nu$-a.e.\ $x$ that $$\frac1n\sum_{j=1}^n\log c(z_j)\to\kappa>0.$$ Since $d^{c}(x,y)>0$, the product $\prod_{j=1}^n c(z_j)$ diverges, so there exists a minimum index $n_0$ such that $d^{c}(z_{n_0},w_{n_0})>\rho_0$, and the estimate above is valid up to this step by construction. By the choice of \(\rho_0,\rho_1\), this implies \(d(z_{n_0},w_{n_0})>\rho_1\). But $\pi(z_{n_0})=A^{-n_0}\pi(x)=A^{-n_0}\pi(y)=\pi(w_{n_0})$, so that $d(z_{n_0},w_{n_0})<2\delta<\rho_1$, which is a contradiction. Therefore, $y=x$ for every $y\in\pi^{-1}(\pi(x))$ and $\nu$-a.e.\ $x$. Hence, $\nu(M')=1$, proving the result.
\end{proof}

\begin{remark}
The proof of Lemma~\ref{lem:pi-injective} adapts the entropy-conjugacy argument of Buzzi and Fisher~\cite[\S6.3]{BF}. In their setting, they assume a dominated splitting rather than partial hyperbolicity, whose deformations are $\gamma$-nearly hyperbolic (Def.~2.4 in~\cite{BF}) with $\gamma$ close to $0$, i.e., the central expansion/contraction rate must stay close to $1$. In the presence of partial hyperbolicity, the central direction may expand by an arbitrarily large factor $\lambda_c>1$ inside the deformation region, provided it is compensated by a sufficiently strong contraction $\lambda_s<1$ outside it, as in Lemma~\ref{le:Aux}.
\end{remark}

\begin{coro}\label{cor:bijection}
Let $\mu\in\mathrm{Prob}_{\mathrm{erg}}^{h_0}(A)$. There exists $\nu\in\mathrm{Prob}_{\mathrm{erg}}(g)$ with $\pi_*\nu=\mu$, and it is the unique ergodic measure of $g$ with this property: if $\nu_1,\nu_2\in\mathrm{Prob}_{\mathrm{erg}}(g)$ satisfy $\pi_*\nu_i=\mu$, then $\nu_1=\nu_2$. Moreover $h_\nu(g)=h_\mu(A)$, and for every continuous $\theta:\mathbb{T}^d\to\mathbb R$,
$${}
h_\nu(g)+\int\theta\circ\pi\,d\nu \;=\; h_\mu(A)+\int\theta\,d\mu.
$$
Furthermore, $\mu(M'')=1$.
\end{coro}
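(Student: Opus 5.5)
The proof has three parts: existence of a lift, uniqueness of the lift, and then the entropy and pressure identities together with $\mu(M'')=1$.

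\textbf{Existence.} Since $\pi$ is continuous and surjective, the set of $g$-invariant measures $\nu'$ with $\pi_*\nu'=\mu$ is a nonempty, weak$^*$ compact, convex set. One obtains it by lifting $\mu$ via Hahn--Banach/Riesz and then averaging along $g$-orbits, using $\pi\circ g=A\circ\pi$. Because $\mu$ is ergodic, this fiber set is a face of $\mathrm{Prob}(g)$. Hence its extreme points are ergodic measures of $g$, and any such extreme point gives the required $\nu$.

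\textbf{Uniqueness.} Let $\nu_1,\nu_2$ be ergodic with $\pi_*\nu_i=\mu$. Since $\mu\in\mathrm{Prob}^{h_0}_{\mathrm{erg}}(A)$, Lemma~\ref{lem:pi-injective} gives $\nu_i(M')=1$ for $i=1,2$. Next I would check that $\pi|_{M'}$ is a Borel isomorphism onto $M''$:
\begin{itemize}
\item $M'$ is Borel, since it is the set where $\mathrm{diam}\,\pi^{-1}(\pi(x))=0$, and $x\mapsto \mathrm{diam}\,\pi^{-1}(\pi(x))$ is upper semicontinuous.
\item $\pi|_{M'}$ is an injective Borel map, so by Lusin--Souslin its image $M''$ is Borel and its inverse is Borel.
\end{itemize}
Moreover $\pi^{-1}(M'')=M'$, directly from the definition of $M'$. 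Therefore, for every Borel $E$,
$$\nu_i(E)=\nu_i(E\cap M')=\mu\bigl(\pi(E\cap M')\bigr),$$
so $\nu_1=\nu_2$. The same computation yields $\mu(M'')=\nu(\pi^{-1}(M''))=\nu(M')=1$.

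\textbf{Entropy and pressure.} On the invariant full-measure sets $M'$ and $M''$, the map $\pi$ is a measurable isomorphism conjugating $(g,\nu)$ to $(A,\mu)$. Invariance holds because $g(M')=M'$: the fibers satisfy $\pi^{-1}(\pi(gx))=g\,\pi^{-1}(\pi(x))$ and $g$ is a homeomorphism. Metric entropy is an isomorphism invariant, so $h_\nu(g)=h_\mu(A)$. As a cross-check, the Ledrappier--Walters formula gives the same conclusion, since the fiber term vanishes when fibers are singletons $\nu$-a.e. Finally, $\int\theta\circ\pi\,d\nu=\int\theta\,d\pi_*\nu=\int\theta\,d\mu$, and adding this to the entropy identity gives the stated equality.

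\textbf{Main obstacle.} I expect the measurability bookkeeping to be the delicate point. One must show that $M'$ is Borel and invariant, and that $\pi|_{M'}$ has a measurable inverse, so that the identity $\nu_i(E)=\mu(\pi(E\cap M'))$ is legitimate. I would handle this through the upper semicontinuity of the fiber diameter together with Lusin--Souslin, as outlined above.
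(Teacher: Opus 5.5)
Your proposal is correct and follows essentially the same route as the paper. Uniqueness comes from Lemma~\ref{lem:pi-injective} plus injectivity of $\pi$ on $M'$; the entropy identity comes from the fibers being singletons $\nu$-a.e. (you use isomorphism invariance, with Ledrappier--Walters as the cross-check the paper relies on); and $\mu(M'')=1$ follows from $\nu(M')=1$. The differences are minor: you obtain an ergodic lift as an extreme point of the face $\pi_*^{-1}(\mu)\cap\mathrm{Prob}(g)$ rather than through the ergodic decomposition of an arbitrary lift (the paper's argument also shows every invariant lift is ergodic), and you supply the Borel measurability of $M'$, $M''$ and $(\pi|_{M'})^{-1}$, together with the equality $\pi^{-1}(M'')=M'$, which the paper leaves implicit.
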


\begin{proof} Notice that, since $\pi_*$ is onto, there exists $\nu_0\in\mathrm{Prob}(g)$ such that $\pi_*\nu_0=\mu$. Consider the ergodic decomposition of $\nu_0$, taht is $\nu_0=\int\nu_\alpha\,d\tau(\alpha)$. Then
$$
\mu=\pi_*\nu_0=\int\pi_*\nu_\alpha\,d\tau(\alpha).
$$
Since $\mu$ is extremal in $\mathrm{Prob}(A)$, it follows that $\pi_*\nu_\alpha=\mu$ for $\tau$-almost every $\alpha$. By Lemma~\ref{lem:pi-injective}, we then have $\nu_\alpha(M')=1$ for $\tau$-almost every $\alpha$. Since $\pi$ is injective on $M'$, all such ergodic components coincide. Hence $\nu_0$ is ergodic.

For uniqueness, let $\nu_1,\nu_2\in\mathrm{Prob}_{\mathrm{erg}}(g)$ satisfy $\pi_*\nu_i=\mu$. By Lemma~\ref{lem:pi-injective}, one has $\nu_1(M')=\nu_2(M')=1$. Since $\pi$ is injective on $M'$, we obtain $$\nu_1=(\pi|_{M'})^{-1}_*\mu=\nu_2.$$

For the entropy identity, fix $\nu$ as above. By Lemma~\ref{lem:pi-injective}, $\nu(M')=1$ and $\pi^{-1}(\pi(x))=\lbrace x\rbrace$, for every $x\in M'$. Then, the Ledrappier--Walters inequality,
$$
h_{\mu}(A)\le h_\nu(g)\le h_{\mu}(A)+\int_M h_{\mathrm{top}}\bigl(g,\pi^{-1}(\pi(x))\bigr)\,d\nu(x)=h_{\mu}(A).
$$
The identity for $\theta$ follows from this and the fact that $$\int\theta\circ\pi\,d\nu=\int\theta\,d(\pi_*\nu)=\int\theta\,d\mu.$$

Finally, since $\nu(M')=1$ and $M'\subset\pi^{-1}(\pi(M'))=\pi^{-1}(M'')$, one has
$$
1\geq\mu(M'')=\pi_*\nu(M'')=\nu\bigl(\pi^{-1}(M'')\bigr)\ge\nu(M')=1.
$$
This proves the result.
\end{proof}

Let $\psi:M\to\mathbb{R}$ be a continuous potential for which there exists a unique equilibrium state for the Anosov $A$. Let $d_c:=\dim(E^{c}_g)$ as in Theorem~\ref{theorem D}. Assume that
\begin{equation}\label{smalloscilation}
\Delta_\psi:=\sup_M\psi-\inf_M\psi \;<\; h_{\mathrm{top}}(A)-h_0 \;-\;d_c\log(\lambda_c).
\end{equation}

\begin{lemma}\label{hentopy}
Let $\mu_\psi$ be the unique equilibrium state for $(A,\psi)$, where $\psi$ satisfies \eqref{smalloscilation}. Then, $\mu_\psi\in\mathrm{Prob}_{\mathrm{erg}}^{h_0}(A)$.
\end{lemma}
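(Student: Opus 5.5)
The argument is a direct comparison of free energies for the equilibrium state $\mu_\psi$. We need two facts: $\mu_\psi$ is ergodic, and $h_{\mu_\psi}(A)>h_0$.

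\emph{Ergodicity.} Since $\mu_\psi$ is the unique equilibrium state of $(A,\psi)$, it is ergodic. Indeed, the entropy map is affine and $\mu\mapsto\int\psi\,d\mu$ is affine. So almost every ergodic component of an equilibrium state is itself an equilibrium state, because the free energy of the average equals the average of the free energies and none of them exceeds $P_{\mathrm{top}}(A,\psi)$. Uniqueness then forces all ergodic components to coincide with $\mu_\psi$.

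\emph{Entropy bound.} Let $m_A$ be the measure of maximal entropy of $A$ (Haar measure), so $h_{m_A}(A)=h_{\mathrm{top}}(A)$. By the variational principle,
\[
h_{\mu_\psi}(A)+\int\psi\,d\mu_\psi \;=\; P_{\mathrm{top}}(A,\psi)\;\ge\; h_{\mathrm{top}}(A)+\int\psi\,dm_A \;\ge\; h_{\mathrm{top}}(A)+\inf_M\psi .
\]
Since $\int\psi\,d\mu_\psi\le\sup_M\psi$, this gives
\[
h_{\mu_\psi}(A)\;\ge\; h_{\mathrm{top}}(A)-\Delta_\psi .
\]
By \eqref{smalloscilation},
\[
h_{\mu_\psi}(A) \;>\; h_0+d_c\log(\lambda_c) \;\ge\; h_0,
\]
where the last inequality uses $\lambda_c\ge 1$. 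Hence $\mu_\psi\in\mathrm{Prob}^{h_0}_{\mathrm{erg}}(A)$.

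Neither step poses a real obstacle. The only point needing care is the ergodicity claim, which rests on the standard affinity of the entropy map and of the integral. The strict inequality comes directly from the strict inequality in \eqref{smalloscilation}. Note that the argument actually gives the stronger bound $h_{\mu_\psi}(A)>h_1=h_0+d_c\log\lambda_c$.
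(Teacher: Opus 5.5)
Your proof is correct and is essentially the paper's argument. The paper argues by contradiction: it assumes $h_{\mu_\psi}(A)\le h_0$ and compares the free energy of $\mu_\psi$ with that of Haar measure, which is exactly your direct estimate $h_{\mu_\psi}(A)\ge h_{\mathrm{top}}(A)-\Delta_\psi>h_0+d_c\log(\lambda_c)$. Two points in your version go beyond the paper's: you justify ergodicity explicitly, which the paper leaves implicit, and you observe that one actually gets $h_{\mu_\psi}(A)>h_1$.
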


\begin{proof}
Suppose $h_{\mu_\psi}(A)\le h_0$. Then,
$$
P(\mu_\psi,\psi)=h_{\mu_\psi}(A)+\int\psi\,d\mu_\psi \le h_0 + \sup_{M}\psi.
$$
Since $A$ is a linear Anosov automorphism, the Haar measure $m$ satisfies $h_m(A)=h_{\mathrm{top}}(A)$, so
$$
P(m,\psi)=h_{\mathrm{top}}(A) + \int\psi\,dm \ge h_{\mathrm{top}}(A)+\inf_{M}\psi.
$$
Therefore, by \eqref{smalloscilation},
$$
P(m,\psi)-P(\mu_\psi,\psi)
\ge \bigl[h_{\mathrm{top}}(A)-h_0\bigr] - \Delta_\psi \;\ge\; d_c\log(\lambda_c)\;>0,
$$
contradicting that $\mu_\psi$ attains $P_{\mathrm{top}}(A,\psi)$. Hence $h_{\mu_\psi}(A)>h_0$.
\end{proof}

 Define $G:=[h_{\mathrm{top}}(A)-h_0]-\Delta_\psi>0$. Notice that $d_c\log(\lambda_c)<G$ by \eqref{smalloscilation}. By the same computation as in the proof of Lemma~\ref{hentopy}, we have $$P_\mu(A,\psi)\le P_A(\psi)-G$$ for every $\mu\in\mathrm{Prob}_{\mathrm{erg}}(A)$ with $h_\mu(A)\le h_0$.

Define $\varphi:=\psi\circ\pi$. By Lemma~\ref{hentopy}, $\mu_\psi\in\mathrm{Prob}_{\mathrm{erg}}^{h_0}(A)$. So, by Corollary~\ref{cor:bijection}, there exists a unique $\nu^+\in\mathrm{Prob}_{\mathrm{erg}}(g)$ with $\pi_*\nu^+=\mu_\psi$, and $h_{\nu^+}(g)=h_{\mu_\psi}(A)$. Since $\mu_\psi\in\mathrm{Prob}_{\mathrm{erg}}^{h_0}(A)$ and $\pi_*\nu^+=\mu_\psi$, Corollary~\ref{cor:negative-exponents} shows that all $\nu^+$-center Lyapunov exponents are negative. Therefore, $\nu^+$ is hyperbolic.

Now, we present the proof of Theorem \ref{coroequilibrium}. 

\begin{proof}[Proof of Theorem \ref{coroequilibrium}]
By Corollary~\ref{cor:bijection} applied to $\mu_\psi$, with $\theta=\psi$,
$$
h_{\nu^+}(g)+\int\varphi\,d\nu^+ = h_{\mu_\psi}(A)+\int\psi\,d\mu_\psi = P_A(\psi),
$$
so that $P_g(\varphi)\ge P_A(\psi)$.

\medskip
\noindent\textbf{Claim.} \textit{For every ergodic $\nu'\in\mathrm{Prob}(g)$, with $\nu'\ne\nu^+$,
$$
h_{\nu'}(g)+\int\varphi\,d\nu' \;<\; P_A(\psi).
$$}

Let $\mu':=\pi_*\nu'$. We have the following cases: 

\smallskip
\noindent\emph{Case 1.} If $h_{\mu'}(A)>h_0$, then $\mu'\in\mathrm{Prob}_{\mathrm{erg}}^{h_0}(A)$. Hence, by Corollary~\ref{cor:bijection}, $\nu'$ is the unique ergodic measure of $g$ projecting to $\mu'$, and $h_{\nu'}(g)=h_{\mu'}(A)$. Then,
$$
h_{\nu'}(g)+\int\varphi\,d\nu' = h_{\mu'}(A)+\int\psi\,d\mu' = P_{\mu'}(A,\psi).
$$
If $\mu'=\mu_\psi$, the uniqueness property given in Corollary~\ref{cor:bijection} gives $\nu'=\nu^+$, contrary to assumption. Thus, $\mu'\ne\mu_\psi$ and  $P_{\mu'}(A,\psi)<P_A(\psi)$, since $\mu_\psi$ is the unique equilibrium state of $(A,\psi)$.

\smallskip
\noindent\emph{Case 2.} If $h_{\mu'}(A)\le h_0$, Lemma~\ref{le:keylemma} gives, unconditionally on the entropy of $\nu'$, $h_{\nu'}(g)\le h_{\mu'}(A)+d_c\log(\lambda_c)$. Hence,
$$
h_{\nu'}(g)+\int\varphi\,d\nu' \;\le\; P_{\mu'}(A,\psi)+d_c\log(\lambda_c) \;\le\; P_A(\psi)-G+d_c\log(\lambda_c) \;<\; P_A(\psi),
$$
because $d_c\log(\lambda_c)<G$.

\smallskip
This proves the Claim. Together with $P_g(\varphi)\ge P_A(\psi)$, it shows $P_g(\varphi)=P_A(\psi)$, attained at $\nu^+$. Thus, $\nu^+$ is an equilibrium state for $(g,\varphi)$.

Now, we prove the uniqueness of the equilibrium state. For this end, let $\tilde\nu$ be any equilibrium state of $(g,\varphi)$, decomposed into ergodic components $\tilde\nu=\int\nu'\,d\tau(\nu')$. By affinity of the pressure functional and the above Claim we have
$$
\int\Bigl(h_{\nu'}(g)+\int\varphi\,d\nu'\Bigr)\,d\tau(\nu') = h_{\tilde\nu}(g)+\int\varphi\,d\tilde\nu = P_A(\psi).
$$
Notice that, by the Claim above, the integrand given in the left side of the inequality above is less or equal than $P_A(\psi)$, with equality only at $\nu'=\nu^+$. Hence, $\tau$ is concentrated at $\nu^+$, so that $\tilde\nu=\nu^+$.
\end{proof}

Finally, we present the proof of Theorem~\ref{theorem D}. 

\begin{proof}[Proof of Theorem~\ref{theorem D}]
Since $h_\nu(g)>h_1=h_0+d_c\log(\lambda_c)$, Lemma~\ref{le:keylemma} gives $h_\mu(A)>h_0$, that is, $\mu\in\mathrm{Prob}_{\mathrm{erg}}^{h_0}(A)$.

Now, let $\psi:=\psi_\mu$ be the potential given by Phelps~\cite{P} for which $\mu$ is the unique equilibrium state of $(A,\psi)$, and set $\varphi:=\psi\circ\pi$. By Corollary~\ref{cor:bijection}, $\nu$ is the unique ergodic measure of $g$ projecting to $\mu$, and $h_\nu(g)=h_\mu(A)$. So, 
$$
h_\nu(g)+\int\varphi\,d\nu = h_\mu(A)+\int\psi\,d\mu = P_A(\psi),
$$
so $P_g(\varphi)\ge P_A(\psi)$.

\medskip
\noindent\textbf{Claim.} \textit{For every ergodic $\nu'\in\mathrm{Prob}(g)$, $\nu'\ne\nu$,
$$
h_{\nu'}(g)+\int\varphi\,d\nu' \;<\; P_A(\psi).
$$}

Let $\mu':=\pi_*\nu'$. We have the following cases: 

\smallskip
\noindent\emph{Case 1.} If $h_{\mu'}(A)>h_0$, then as in the proof of Theorem~\ref{coroequilibrium}, Corollary~\ref{cor:bijection} gives $h_{\nu'}(g)+\int\varphi\,d\nu' = P_{\mu'}(A,\psi)$. If $\mu'=\mu$, uniqueness gives $\nu'=\nu$, contrary to assumption. Hence, $\mu'\ne\mu$, and $P_{\mu'}(A,\psi)<P_A(\psi)$, because $\mu$ is the unique equilibrium state of $(A,\psi)$.

\smallskip
\noindent\emph{Case 2.} If $h_{\mu'}(A)\le h_0$, Lemma~\ref{le:keylemma} gives $h_{\nu'}(g)\le h_{\mu'}(A)+d_c\log(\lambda_c)$, so that
$$
h_{\nu'}(g)+\int\varphi\,d\nu' \;\le\; P_{\mu'}(A,\psi)+d_c\log(\lambda_c).
$$
By definition of the pressure gap $\Delta_\mu$, we have $P_{\mu'}(A,\psi)\le P_A(\psi)-\Delta_\mu$, so that
$$
h_{\nu'}(g)+\int\varphi\,d\nu' \;\le\; P_A(\psi)+d_c\log(\lambda_c)-\Delta_\mu \;<\; P_A(\psi),
$$
since $d_c\log(\lambda_c)<\Delta_\mu$.

\smallskip
This proves the Claim. As in the proof of Theorem~\ref{coroequilibrium}, it follows that $P_g(\varphi)=P_A(\psi)$, attained only at $\nu$, and the ergodic decomposition of any equilibrium state of $(g,\varphi)$ together with the affinity of the pressure functional shows $\nu$ is the unique such state.
\end{proof}

\end{document}